\renewcommand{\sinh}{\operatorname{sinh}}
\renewcommand{\cosh}{\operatorname{cosh}}
\newtheorem{thm}{Theorem}
\title{Planar random motions in a vortex}
\author{ %\href{https://orcid.org/0000-0000-0000-0000}{Manfred Marvin Marchione}\thanks{Use footnote for providing further information about author (webpage, alternativeaddress)---\emph{not} for acknowledging funding agencies.}\\
	\href{0000-0002-6421-533X}{Enzo Orsingher}\\
	Department of Statistical Sciences\\
	Sapienza University of Rome\\
	\texttt{enzo.orsingher@uniroma1.it} \\
	%% examples of more authors
	\And
	\href{https://orcid.org/0000-0002-6163-044X}{Manfred Marvin Marchione} \\
	Department of Statistical Sciences\\
	Sapienza University of Rome\\
	\texttt{manfredmarvin.marchione@uniroma1.it}}
\date{\today}
\begin{document}
\maketitle

\begin{abstract}
We study a planar random motion $\big(X(t),Y(t)\big)$ with orthogonal directions which can turn clockwise, counterclockwise and reverse its direction each with a different probability. The support of the process is given by a time-varying square and the singular distributions on the boundary and the diagonals of the square are obtained. In the interior of the support, we study the hydrodynamic limit of the distribution. We then investigate the time $T(t)$ spent by the process moving vertically and the joint distribution of $\big(T(t),Y(t)\big)$. We prove that, in the hydrodynamic limit, the process $\big(X(t),Y(t)\big)$ spends half the time moving vertically.
\end{abstract}

\keywords{Telegraph process \and Bessel functions \and Hyperbolic equations}

\section{Introduction}
\noindent Finite-velocity planar random motions have been investigated by several authors in the literature since the Eighties. These motions can be classified according to different criteria. One such classification considers the number of possible directions the motion can take, distinguishing between minimal (Di Crescenzo \cite{dicrescenzocyclicplanar}), orthogonal (Orsingher \cite{orsingher2000}) and motions with infinite directions (Orsingher and De Gregorio \cite{degregorio}). A categorization is also possible based on the distribution of the waiting time before a change of direction occurs. While the classical problem involves direction changes at Poisson times, variants such as the Polya process (Crimaldi et al. \cite{polya}), processes with gamma-distributed intertimes (Martinucci et al. \cite{gamma}) and the geometric counting process (Di Crescenzo et al. \cite{geom}) have been explored in the univariate case. In the bivariate case, Cinque and Orsingher \cite{cinque} examined the case in which the direction changes are governed by a non-homogeneous Poisson process. Fractional extensions of finite-velocity planar random motions have been investigated by Masoliver and Lindenberg \cite{masoliver} and Masoliver \cite{masoliver2}.\\
\noindent A relevant criteria for classifying planar random motion considers the mechanism for determining the direction taken after a change occurs. Orsingher \cite{orsingher2000} investigated a motion with orthogonal directions which turns clockwise or counter-clockwise with equal probabilities. Di Crescenzo \cite{dicrescenzocyclicplanar} examined a minimal cyclic random motion, which can only turn counter-clockwise when the change of direction occurs. A cyclic planar random motion with orthogonal directions was examined by Orsingher et al. \cite{ogz}. Kolesnik and Orsingher \cite{kolesnik} and Cinque and Orsingher \cite{cinque} discussed a variant of the planar motion which can turn clockwise, counter-clockwise and can reverse its direction, each with an equal probability of $\frac{1}{3}$. The term \textit{reflection} was used by the authors to denote the direction reversal, and they showed that expressing the distribution of the process explicitly becomes a challenging task when reflection is present.\\
Multivariate extensions have been proposed, for minimal cyclic random motions, by Samoilenko \cite{samoilenko} and Lachal et al. \cite{lachaletal}. Lachal \cite{lachal} studied multidimensional random motions with an arbitrary number of directions, while three-dimensional random motions with orthogonal directions were investigated by Cinque and Orsingher \cite{cinque2023}.\\
\noindent In this paper, we study a finite-velocity planar random motion for which orthogonal changes of direction and reflection occur with different probabilities. We consider a planar random motion $\big(X(t),Y(t)\big)$ which moves along four directions
$$d_j=\Big(\cos\left(\frac{\pi}{2}j\right),\;\sin\left(\frac{\pi}{2}j\right)\Big),\qquad j=0,1,2,3.$$ Of course, we have that $d_j=d_{j+4n}$ for integer values of $n$. We assume that the random vector $\big(X(t),Y(t)\big)$ lies at the origin of the Cartesian plane at the initial time $t=0$, and it starts moving along one of the four possible directions with equal probabilities. The changes of direction are modeled as Poisson arrivals and we denote by $N(t),\;t>0$, the total number of changes. The intensity of $N(t)$ is assumed to be constant and we denote it by $\lambda$. Moreover, we assume that at each Poisson event the new direction is taken according to the following rule. With probability $p$ the direction changes by a counter-clockwise turn, therefore passing from $d_j$ to $d_{j+1}$. With probability $q$ the direction changes by a clockwise turn, therefore passing from $d_{j+1}$ to $d_{j}$. A reflection occurs wih probability $1-p-q$, changing the direction from $d_j$ to $d_{j+2}$. Thus, reflection is possible only if $p+q<1$. 
\noindent In the special case $p=q=\frac{1}{2}$, the process we study reduces to that studied by Kolesnik and Orsingher \cite{kolesnik} for which an explicit representation of the distribution can be obtained. If $p\neq q$, a vorticity effect is introduced in the sense that the process tends to perform clockwise turns with higher probability if $p<q$, while counter-clockwise turns are more likely to occur if $p>q$. In the limiting cases $p=1$ or $q=1$, the cyclic random motion investigated by Orsingher et al. \cite{ogz} is obtained. It is clear that the support of the motion is time-dependent and coincides with the square $S_{ct}$ defined as $$S_{ct}=\left\{(x,y)\in\mathbb{R}^2:\;|x+y|\le ct,\;|x-y|\le ct\right\}.$$ Moreover, the distribution of $\big(X(t),Y(t)\big)$ has a singular component on the boundary $\partial S_{ct}$ of the domain $S_{ct}$. The particle with position $\big(X(t),Y(t)\big)$ lies on $\partial S_{ct}$ if no changes of direction occur or if clockwise and counter-clockwise turns alternate. In particular, if no changes of direction occur the particle lies on the vertices of the square $S_{ct}$. A sketch of some sample paths is provided in figure \ref{fig:samplepaths}.\\
\begin{figure}[h]
\centering
\includegraphics[scale=0.75]{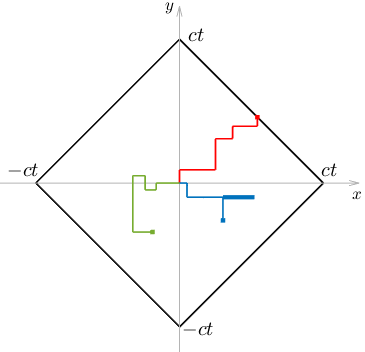}
\caption{some sample paths of the process $\big(X(t),Y(t)\big)$. The motion takes place in the time-varying square $S_{ct}$ and a degenerate component of the distribution is present on the boundary and the diagonals of the square. In particular, the process lies on $\partial S_{ct}$ if the path changes direction by always alternating two contiguous directions. This behaviour is displayed, for instance, by the red path in the figure. The green and blue sample paths lead the particle to the interior of the support. While the green path only exhibits clockwise and counterclockwise changes of direction, in the blue path a reflection has occurred and the particle has reversed its direction.}\label{fig:samplepaths}
\end{figure}

Our work starts with the presentation of some results concerning the distribution of the process $\big(X(t),Y(t)\big)$ on the boundary $\partial S_{ct}$. In the general case where reflection is admitted, the probability of the particle lying on the boundary is given by
$$\mathbb{P}\Big(\big(X(t),Y(t)\big)\in \partial S_{ct}\Big)=e^{-\lambda t}\left\{e^{\lambda t\sqrt{pq}}\left(1+\frac{p+q}{2\sqrt{pq}}\right)+e^{-\lambda t\sqrt{pq}}\left(1-\frac{p+q}{2\sqrt{pq}}\right)-1\right\}.$$
On each side of $\partial S_{ct}$, the distribution of $\big(X(t),Y(t)\big)$ admits a continuous component which is explicitly obtained in this paper. In particular, we prove that 
\begin{align}\mathbb{P}\Big(X(t)+&Y(t)=ct,\; X(t)-Y(t)\in \mathop{d\eta}\Big)/\mathop{d\eta}\nonumber\\
&=\frac{e^{-\lambda t}}{4c}\left[\frac{\lambda(p+q)}{2}\;I_0\left(\frac{\lambda}{c}\sqrt{pq}\sqrt{c^2t^2-\eta^2}\right)+\;\frac{\partial}{\partial t}I_0\left(\frac{\lambda}{c}\sqrt{pq}\sqrt{c^2t^2-\eta^2}\right)\right],\qquad\lvert\eta\lvert<ct\nonumber\end{align}
with characeristic function
\begin{align}\mathbb{E}\left[e^{i\alpha\left(X(t)-Y(t)\right)}\;\mathds{1}_{\{X(t)+Y(t)=ct\}}\right]=\frac{e^{-\lambda t}}{4}&\left[\left(1+\frac{\lambda(p+q)}{2\sqrt{\lambda^2pq-\alpha^2c^2}}\right)e^{t\sqrt{\lambda^2pq-\alpha^2c^2}}\right.\nonumber\\+&\left.\left(1-\frac{\lambda(p+q)}{2\sqrt{\lambda^2pq-\alpha^2c^2}}\right)e^{-t\sqrt{\lambda^2pq-\alpha^2c^2}}\right].\nonumber\end{align}
We then proceed with our analysis by studying the distribution of $\big(X(t),Y(t)\big)$ on the diagonals of $S_{ct}$, that is on the set $$Q_{ct}=\left\{(x,y)\in S_{ct}:\;x=0\vee y=0\right\}.$$ We first show that 
\begin{equation}\label{intro:PQ}\mathbb{P}\Big(\big(X(t),Y(t)\big)\in Q_{ct}\Big)=e^{-\lambda (p+q)t}.\end{equation}
Formula (\ref{intro:PQ}) above highlights that if $p+q=1$, that is if reflection is not admitted, the probability of the particle lying on the diagonals of the square $S_{ct}$ coincides with the probability of no Poisson events occurring, which implies that no direction changes occur and therefore the particle must lie on one of the vertices of $S_{ct}$. In other words, if reflection is not admitted the particle lies on $Q_{ct}$ if and only if it lies on one of the vertices of $S_{ct}$. Conversely, if reflection is possible, the distribution of $\big(X(t),Y(t)\big)$ on $Q_{ct}$ admits a continuous component. In fact, we show that, if $p+q<1$,
\begin{align}\mathbb{P}\Big( X(t)\in\mathop{dx},\;Y(t)=0\Big)/\mathop{dx}=\frac{e^{-\lambda t}}{4c}\Bigg[\lambda(1-p-q)&\; I_0\left(\frac{\lambda}{c}(1-p-q)\sqrt{c^2t^2-x^2}\right)\nonumber\\+&\frac{\partial}{\partial t}I_0\left(\frac{\lambda}{c}(1-p-q)\sqrt{c^2t^2-x^2}\right)\Bigg],\qquad \lvert x\lvert<ct\nonumber\end{align} and we obtain the corresponding characteristic function.\\
%with characteristic function
%\begin{align}\mathbb{E}\left[e^{i\alpha X(t)}\;\mathds{1}_{\{Y(t)=0\}}\right]=\frac{e^{-\lambda t}}{4}&\left[\left(1+\frac{\lambda(1-p-q)}{\sqrt{\lambda^2(1-p-q)^2-\alpha^2c^2}}\right)e^{t\sqrt{\lambda^2(1-p-q)^2-\alpha^2c^2}}\right.\nonumber\\+&\left.\left(1-\frac{\lambda(1-p-q)}{\sqrt{\lambda^2(1-p-q)^2-\alpha^2c^2}}\right)e^{-t\sqrt{\lambda^2(1-p-q)^2-\alpha^2c^2}}\right].\nonumber\end{align}
We then study the distribution of $\big(X(t),Y(t)\big)$ in the interior of its support $S_{ct}$. We show that the probability density function \begin{equation}\label{intro:u}u(x,y,t)\mathop{dx}\mathop{dy}=\mathbb{P}\Big(X(t)\in\mathop{dx},\;Y(t)\in\mathop{dy}\Big),\qquad \lvert x\lvert+\lvert y\lvert< ct\end{equation} is the solution to a complicated fourth-order partial differential equation for which a general solution is difficult to give in an explicit form. In the special case in which reflection is not admitted, we derive the characteristic function of $\big(X(t),Y(t)\big)$ in the interior of $S_{ct}$ explicitly. In the general case, we are able to study the hydrodynamic limit of the distribution for $\lambda,c\to+\infty$, with $\frac{\lambda}{c^2}\to1$, and we prove that the heat equation
\begin{equation}\label{intro:heat}\frac{\partial u}{\partial t}=\frac{1}{4}\;\frac{(1-p)+(1-q)}{(1-p)^2+(1-q)^2}\;\Delta u\end{equation} is satisfied in the limiting case. The interpretation of equation (\ref{intro:heat}) is that the process $\big(X(t),Y(t)\big)$ converges to a planar Brownian motion with independent components and diffusion coefficient depending on $p$ and $q$. It is interesting to observe that the diffusion coefficient is maximized for $p=q=\frac{1}{2}$.\\
%If $p+q=1$, the characteristic function has the form
%\begin{align*}\mathbb{E}\Big[&e^{i\left(\alpha X(t)+\beta Y(t)\right)}\Big]\nonumber\\
%&\;\;\;\;=e^{-\lambda t}\Bigg\{k_0(\alpha,\beta)\,\cosh\Big(A(\alpha,\beta)\mathop{t}+B(\alpha,\beta)\mathop{t}\Big)+k_1(\alpha,\beta)\,\sinh\Big(A(\alpha,\beta)\mathop{t}+B(\alpha,\beta)\mathop{t}\Big)\nonumber\\
%&\qquad\;\;\;\;-k_2(\alpha,\beta)\,\cosh\Big(A(\alpha,\beta)\mathop{t}-B(\alpha,\beta)\mathop{t}\Big)-k_3(\alpha,\beta)\,\sinh\Big(A(\alpha,\beta)\mathop{t}-B(\alpha,\beta)\mathop{t}\Big)\Bigg\}.\nonumber\end{align*}
%Explicit expressions for the coefficients $A(\alpha,\beta)$, $B(\alpha,\beta)$ and $k_j(\alpha,\beta),\,j=0,1,2,3,$ are given in this paper.\\

The final part of our work is devoted to the study of the time spent by $\big(X(t),Y(t)\big)$ moving vertically, in parallel to the $y$-axis. Thus, we study the random process \begin{equation*}\label{Tdef}T(t)=\int_0^t\mathds{1}_{\{D(\tau)\in\{d_1,d_3\}\}}\mathop{d\tau},\qquad t>0.\end{equation*} 
We obtain the exact distribution of $T(t)$ and its characteristic function. In particular, we prove that
\begin{align}\mathbb{P}\Big(T(t)\in\mathop{ds}\Big)/ds=e^{-\lambda(p+q) t}\Bigg[\lambda(p+&q)\; I_0\left(2\lambda(p+q)\sqrt{s(t-s)}\right)\nonumber\\
&+\frac{\partial}{\partial t}I_0\left(2\lambda(p+q)\sqrt{s(t-s)}\right)\Bigg],\qquad s\in(0,t).\end{align}
%and
%\begin{align*}\mathbb{E}\left[e^{i\alpha T(t)}\right]=&\frac{1}{2}\left(1+\frac{2\lambda(p+q)}{\sqrt{4\lambda^2(p+q)^2-\alpha^2}}\right)e^{i \frac{\alpha}{2}t-\lambda(p+q)t+\frac{1}{2}\sqrt{4\lambda^2(p+q)^2-\alpha^2}\;t}\nonumber\\
%\;&+\frac{1}{2}\left(1-\frac{2\lambda(p+q)}{\sqrt{4\lambda^2(p+q)^2-\alpha^2}}\right)e^{i \frac{\alpha}{2}t-\lambda(p+q)t-\frac{1}{2}\sqrt{4\lambda^2(p+q)^2-\alpha^2}\;t}.\end{align*}

We then analyze the joint distribution of the vector $\big(T(t),Y(t)\big)$. The support of such random vector is given by the triangle $$R_{ct}=\left\{(s,y)\in\mathbb{R}^2:\;\lvert y\lvert\le cs,\,s\in[0,t]\right\}.$$ The process $\big(T(t),Y(t)\big)$ can move along two directions which are parallel to the oblique sides of $R_{ct}$. Moreover, as we will discuss in the remainder of the paper, the process can arrest its movement at some points and stay still for some time. In particular, this occurs when the process $\big(X(t),Y(t)\big)$ is moving horizontally in $S_{ct}$. Some sample paths of the process $\big(T(t),Y(t)\big)$ are exemplified in figure \ref{fig:Rsamplepaths}.

\begin{figure}[h]
\centering
\includegraphics[scale=0.75]{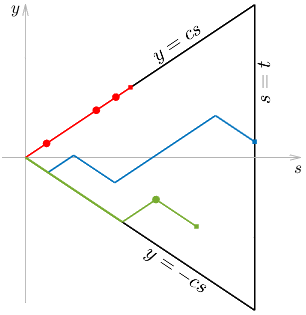}
\caption{some sample paths of the process $\big(T(t),Y(t)\big)$, whose support coincides with the time-varying triangle $R_{ct}$. The particle with position $\big(T(t),Y(t)\big)$ can move along two diffrent directions, parallel to the oblique side of the triangle, and can arrest its movement at some points and stay still for some time. For each path in the figure, we highlighted with a dot the points in which the particle has stopped. In particular, the particle which moved along the blue path has never arrested its movement and therefore it lies on the verical side of $R_{ct}$. The particle lies on one of the oblique sides of $R_{ct}$ if it always moves along the same direction, possibly stopping its movement sometimes. This is the case of the red path in the figure.}\label{fig:Rsamplepaths}
\end{figure}
\noindent We first study the distribution of $\big(T(t),Y(t)\big)$ in the interior of $R_{ct}$ and we show that the probability density function \begin{equation}\label{intro:ubar}\bar{u}(s,y,t)\mathop{ds}\mathop{dy}=\mathbb{P}\Big(T(t)\in \mathop{ds},\;Y(t)\in\mathop{dy}\Big),\qquad (s,y)\in R_{ct}.\end{equation} satisfies a third-order partial differential equation. By studying the hydrodynamic limit of the equation, we prove that, for $\lambda,c\to+\infty$ with $\frac{\lambda}{c^2}\to1$, the partial differential equation 
\begin{equation}\frac{\partial u}{\partial t}=-\frac{1}{2}\frac{\partial u}{\partial s}+\frac{1}{4}\;\frac{(1-p)+(1-q)}{(1-p)^2+(1-q)^2}\frac{\partial^2 u}{\partial y^2.}\label{intro:bivdegenerate}\end{equation} holds. The interpretation of equation (\ref{intro:bivdegenerate}) is that, by taking the hydrodynamic limit, the asymptotic distribution is $$\lim_{\lambda,c\to+\infty}\mathbb{P}\Big(T(t)\in \mathop{ds},\;Y(t)\in\mathop{dy}\Big)/\left(\mathop{ds}\mathop{dy}\right)=\sqrt{\frac{(1-p)^2+(1-q)^2}{(1-p)+(1-q)}}\cdot \frac{e^{-\frac{(1-p)^2+(1-q)^2}{(1-p)+(1-q)}\,\frac{y^2}{t}}}{\sqrt{\pi t}}\cdot \delta\left(s-\frac{t}{2}\right)$$ where the symbol $\delta(\cdot)$ denotes the Dirac delta. Therefore, while in the hydrodynamic limit the variable $Y(t)$ becomes a Brownian motion with variance depending on $p$ and $q$, the process $T(t)$ becomes deterministic and its limiting value is equal to $\frac{t}{2}$. Thus, in the limit, the process $\big(X(t),Y(t)\big)$ spends half of the time moving vertically.\\
The joint distribution of $\big(T(t),Y(t)\big)$ is also studied on the sides of the triangle $R_{ct}$. As for the oblique side, on which the particle lies if $Y(t)=c T(t)$, the distribution satisfies a third-order partial differential equation. In the special case $p+q=1$, we are able to study such equation in detail. We first calculate the probability of the particle $\big(T(t),Y(t)\big)$ lying on the oblique side of the triangle, which reads \begin{align*}\mathbb{P}\big(Y(t)=c\,T(t)\big)=\frac{e^{-\lambda t}}{8}\Bigg\{&\left(1+\frac{1}{\sqrt{2p(1-p)}}\right)^2 e^{\lambda t\sqrt{2p(1-p)}}\nonumber\\
&+\left(1-\frac{1}{\sqrt{2p(1-p)}}\right)^2 e^{-\lambda t\sqrt{2p(1-p)}}-\frac{(2p-1)^2}{p(1-p)}\Bigg\}.\nonumber\end{align*} We then investigate the exact distribution of $\big(T(t),Y(t)\big)$ when $Y(t)=cT(t)$. If $p+q=1$, we show that 
\begin{align*}\mathbb{P}\Big(T(t)\in\mathop{ds},\,Y(t)=c\,T(t)\Big)/\mathop{ds}=&\frac{\lambda}{2\sqrt{2}}\;I_0\Big(2\lambda\sqrt{2p(1-p)}\,\sqrt{s(t-s)}\Big)\nonumber\\&+\frac{1}{4}\left(1+\frac{1}{2p(1-p)}\right)\frac{\partial}{\partial t}I_0\Big(2\lambda\sqrt{2p(1-p)}\,\sqrt{s(t-s)}\Big),\qquad\lvert s\lvert<t.\nonumber\end{align*}
% with characteristic function \begin{align*}\mathbb{E}&\left[e^{i\alpha T(t)}\;\mathds{1}_{\{Y(t)=c\, T(t)\}}\right]\nonumber\\
%&=\frac{1}{16}\left(\frac{1+2p(1-p)}{p(1-p)}+\frac{4p(1-p)(2\lambda+i\alpha)-i\alpha(1+2p(1-p))}{p(1-p)\sqrt{8\lambda^2p(1-p)-\alpha^2}}\right)e^{-\lambda t+\frac{t}{2}\left(i\alpha+\sqrt{8\lambda^2p(1-p)-\alpha^2}\right)}\nonumber\\
%&\;\;+\frac{1}{16}\left(\frac{1+2p(1-p)}{p(1-p)}-\frac{4p(1-p)(2\lambda+i\alpha)-i\alpha(1+2p(1-p))}{p(1-p)\sqrt{8\lambda^2p(1-p)-\alpha^2}}\right)e^{-\lambda t+\frac{t}{2}\left(i\alpha-\sqrt{8\lambda^2p(1-p)-\alpha^2}\right)}\nonumber\\
%&\;\;\;\;\;-\frac{(2p-1)^2}{8p(1-p)}\;e^{-\lambda t}.\nonumber\end{align*} 
We then discuss a special case in which the partial differential equation governing the distribution of $\big(T(t),Y(t)\big)$ on the oblique side of $R_{ct}$ reduces to a second-order equation, namely the case $p=q$ in which clockwise and counterclockwise changes of direction are equiprobable. While we are not able to find the exact distribution, for $p=q$ we obtain the probability of the particle lying on the oblique side of $R_{ct}$, that is
\begin{align*}\mathbb{P}\big(Y(t)=c\,T(t)\big)=&\frac{3}{8}\left(1+\frac{(1+6p)}{3\sqrt{12p^2-4p+1}}\right)\;e^{\frac{\lambda t}{2}\left(-(1+2p)+\sqrt{12p^2-4p+1}\right)}\nonumber\\&\;\;+\frac{3}{8}\left(1-\frac{(1+6p)}{3\sqrt{12p^2-4p+1}}\right)\;e^{\frac{\lambda t}{2}\left(-(1+2p)-\sqrt{12p^2-4p+1}\right)}.\end{align*}
% and the characteristic function, which has the complicated form
%\begin{align*}\mathbb{E}&\left[e^{i\alpha T(t)}\;\mathds{1}_{\{Y(t)=c\, T(t)\}}\right]\nonumber\\
%=&\frac{3}{8}\left(1+\frac{\lambda-i\alpha+6\lambda p}{3\sqrt{\lambda^2(12p^2-4p+1)-\alpha^2-2i\alpha\lambda(1-2p)}}\right)\;e^{\frac{t}{2}\left(i\alpha-\lambda(1+2p)+\sqrt{\lambda^2(12p^2-4p+1)-\alpha^2-2i\alpha\lambda(1-2p)}\right)}\nonumber\\&\;\;+\frac{3}{8}\left(1-\frac{\lambda-i\alpha+6\lambda p}{3\sqrt{\lambda^2(12p^2-4p+1)-\alpha^2-2i\alpha\lambda(1-2p)}}\right)\;e^{\frac{t}{2}\left(i\alpha-\lambda(1+2p)-\sqrt{\lambda^2(12p^2-4p+1)-\alpha^2-2i\alpha\lambda(1-2p)}\right)}.\nonumber\end{align*} 
Finally, we study the distribution of $\big(T(t),Y(t)\big)$ on the vertical side of $R_{ct}$ and we show that it perfecly resembles the distribution of the process $\big(X(t),Y(t)\big)$ on the diagonals $Q_{ct}$ of the square $S_{ct}$. Thus, it holds that \begin{align*}\mathbb{P}\Big(Y(t)\in\mathop{dy},\,T(t)=t\Big)/\mathop{dy}=\frac{e^{-\lambda t}}{4c}\Bigg[\lambda&(1-p-q)\; I_0\left(\frac{\lambda}{c}\,(1-p-q)\,\sqrt{c^2t^2-y^2}\right)\nonumber\\
&+\frac{\partial}{\partial t}I_0\left(\frac{\lambda}{c}\,(1-p-q)\,\sqrt{c^2t^2-y^2}\right)\Bigg],\qquad \lvert y\lvert<ct\end{align*} The paper is structured in the following manner. In section 2 we study the distribution of the process $\big(X(t),Y(t)\big)$ on the boundary $\partial S_{ct}$ of its support, while in section 3 we study the distribution on the diagonals $Q_{ct}$. Section 4 is devoted to the study of the distribution in the interior of the square $S_{ct}$. In the final section, the distribution of the time spent moving vertically $T(t)$ and the joint distribution of the vector $\big(T(t),Y(t)\big)$ are studied.

\section{Distribution on the boundary}\label{sec:boundary}
\noindent In this section, we study the behaviour of the particle $\big(X(t),Y(t)\big)$ on the boundary of the square $S_{ct}$. We first calculate the probability of the particle lying on $\partial S_{ct}$ and we then obtain the exact distribution of the position of the particle on the boundary.  We examine the general case in which reflection is possible. Our analysis starts by observing that, regardless of the initial direction of the motion, the boundary can be reached if the particle always moves along two contiguous directions by alternating clockwise and counterclockwise changes of direction. Of course, if no changes of direction occur, the motion always takes place on $\partial S_{ct}$ because the particle lies on the vertices of the square. Therefore, we distinguish three cases in which the particle is on $\partial S_{ct}$:
\begin{itemize}
\item the particle is on the boundary because no changes of direction occur
\item an even number of alternating changes of direction occurs, in which case the first change of direction can be either clockwise or counterclockwise
\item an odd number of alternating changes of direction occurs, in which case the first change of direction can be either clockwise or counterclockwise.
\end{itemize}
Thus, we have the following result.
\begin{thm}It holds that
\begin{align}\mathbb{P}\Big(\big(X(t),Y(t)\big)\in \partial S_{ct}\Big)=e^{-\lambda t}\left\{e^{\lambda t\sqrt{pq}}\left(1+\frac{p+q}{2\sqrt{pq}}\right)+e^{-\lambda t\sqrt{pq}}\left(1-\frac{p+q}{2\sqrt{pq}}\right)-1\right\}.\label{boundaryprob}\end{align}\end{thm}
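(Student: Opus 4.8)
The plan is to condition on the number $N(t)$ of Poisson events and to reduce the event $\{(X(t),Y(t))\in\partial S_{ct}\}$ to a purely combinatorial condition on the sequence of turns. First I would establish the geometric characterization: since $X(t)+Y(t)=c\int_0^t\big(\dot X(\tau)+\dot Y(\tau)\big)\,d\tau$ and the integrand equals $+c$ whenever the current direction is $d_0$ or $d_1$ and $-c$ whenever it is $d_2$ or $d_3$, the particle lies on the side $\{x+y=ct\}$ if and only if it never travels along $d_2$ or $d_3$; the analogous statements hold for the other three sides, each associated with one of the four contiguous pairs of directions. Hence at time $t$ the particle is on $\partial S_{ct}$ precisely when it has used only two contiguous directions up to $t$, which — if at least one change has occurred — forces every Poisson event to be an alternating clockwise/counter\-clockwise turn between those two directions; a reflection or two equal turns in a row immediately ejects the particle from $\partial S_{ct}$, and it can never return.

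Next I would carry out the bookkeeping conditionally on $N(t)=n$ and on the initial direction, which is uniform on $\{d_0,d_1,d_2,d_3\}$. If $n=0$ the particle sits on a vertex, so it is on $\partial S_{ct}$ with probability $1$ (the first of the three cases in the discussion above). If $n\ge1$, from a fixed starting direction there are exactly two admissible, mutually exclusive scenarios: the first turn is counter\-clockwise (probability $p$) and the subsequent turns alternate $q,p,q,\dots$, or the first turn is clockwise (probability $q$) and the subsequent turns alternate $p,q,p,\dots$. Summing the two branches, the conditional probability of remaining on $\partial S_{ct}$ is $2(pq)^k$ when $n=2k$ with $k\ge1$, and $p^{k+1}q^{k}+q^{k+1}p^{k}=(p+q)(pq)^k$ when $n=2k+1$ with $k\ge0$. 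By the fourfold rotational symmetry of the transition rule these values are independent of the initial direction, so the factor $\tfrac14$ for the starting direction washes out.

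Finally I would resum the series. With $a:=\lambda t\sqrt{pq}$ and the term $n=0$ split off,
\[
\mathbb{P}\big((X(t),Y(t))\in\partial S_{ct}\big)=e^{-\lambda t}\Bigg[1+2\sum_{k\ge1}\frac{a^{2k}}{(2k)!}+\frac{p+q}{\sqrt{pq}}\sum_{k\ge0}\frac{a^{2k+1}}{(2k+1)!}\Bigg],
\]
and recognizing the two series as $2(\cosh a-1)$ and $\frac{p+q}{\sqrt{pq}}\sinh a$, then writing $\cosh a=\tfrac12(e^{a}+e^{-a})$ and $\sinh a=\tfrac12(e^{a}-e^{-a})$, collects exactly into formula (\ref{boundaryprob}). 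I expect the only genuinely delicate step to be the first one: carefully justifying the ``only two contiguous directions'' characterization and, in particular, that the motion cannot re-enter $\partial S_{ct}$ after leaving it; once that is in place, the conditioning and the summation are routine.
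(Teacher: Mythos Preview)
Your proposal is correct and follows essentially the same route as the paper: condition on $N(t)$, observe that the particle lies on $\partial S_{ct}$ if and only if it has only alternated two contiguous directions, compute the conditional probabilities $2(pq)^k$ for $N(t)=2k$ and $(p+q)(pq)^k$ for $N(t)=2k+1$, and resum via $\cosh$ and $\sinh$. The only difference is that you spell out the geometric characterization (and the fact that the particle cannot return to $\partial S_{ct}$) more carefully than the paper does, but the combinatorics and the summation are identical.
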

\begin{proof}By treating separately the cases in which the process performs no changes of direction, an even number of changes and an odd number of changes, we can write that
\begin{align}\mathbb{P}\Big(\big(X&(t),Y(t)\big)\in \partial S_{ct}\Big)\nonumber\\
=&\mathbb{P}\big(N(t)=0\big)+\sum_{k=1}^{\infty}\mathbb{P}\big(N(t)=2k\big)\left(p^kq^k+q^kp^k\right)+\sum_{k=0}^{\infty}\mathbb{P}\big(N(t)=2k+1\big)\left(p^{k+1}q^k+q^{k+1}p^k\right)\nonumber\\
=&e^{-\lambda t}+2e^{-\lambda t}\sum_{k=1}^{\infty}\frac{(\lambda t\sqrt{pq})^{2k}}{(2k)!}+e^{-\lambda t}\frac{(p+q)}{\sqrt{pq}}\sum_{k=0}^{\infty}\frac{(\lambda t\sqrt{pq})^{2k+1}}{(2k+1)!}\nonumber\\
=&e^{-\lambda t}+2e^{-\lambda t}\big(\cosh\left(\lambda t \sqrt{pq}\right)-1\big)+e^{-\lambda t}\frac{(p+q)}{\sqrt{pq}}\sinh\left(\lambda t \sqrt{pq}\right)\nonumber\\
=&e^{-\lambda t}\left\{e^{\lambda t\sqrt{pq}}\left(1+\frac{p+q}{2\sqrt{pq}}\right)+e^{-\lambda t\sqrt{pq}}\left(1-\frac{p+q}{2\sqrt{pq}}\right)\right\}-e^{-\lambda t}\nonumber\end{align} which completes the proof.
\end{proof}
We emphasize that formula (\ref{boundaryprob}) represents the probability of the particle being at any point on the entire boundary of the support, encompassing all four sides and the vertices of the square. In order to calculate the probability of the particle lying in the interior of a specific side of $\partial S_{ct}$, the probability mass on the vertices must be subtracted. The result must then be divided by four in order to focus on just one side. For instance, the probability of the process $\big(X(t),Y(t)\big)$ belonging to the interior of the side of $\partial S_{ct}$ which belongs to the first quadrant of the Cartesian plane is given by \begin{align}\mathbb{P}\Big(X(t)+Y(t)=c&t,\;\left\lvert X(t)-Y(t)\right\lvert<ct\Big)=\frac{1}{4}\left[\mathbb{P}\Big(\big(X(t),Y(t)\big)\in \partial S_{ct}\Big)-\mathbb{P}\big(N(t)=0\big)\right]\nonumber\\
=&\frac{e^{-\lambda t}}{4}\left\{e^{\lambda t\sqrt{pq}}\left(1+\frac{p+q}{2\sqrt{pq}}\right)+e^{-\lambda t\sqrt{pq}}\left(1-\frac{p+q}{2\sqrt{pq}}\right)-2\right\}.\label{boundaryprobside}\end{align}
We now highlight some interesting special cases of formula (\ref{boundaryprob}). First of all, if reflection is not admitted, that is if $p+q=1$, we have that
\begin{align}\mathbb{P}\Big(\big(X&(t),Y(t)\big)\in \partial S_{ct}\Big)\nonumber\\
\;&\;\;=e^{-\lambda t}\left\{e^{\lambda t\sqrt{p(1-p)}}\left(1+\frac{1}{2\sqrt{p(1-p)}}\right)+e^{-\lambda t\sqrt{p(1-p)}}\left(1-\frac{1}{2\sqrt{p(1-p)}}\right)-1\right\}.\label{trenitaliaprima}\end{align}
Moreover, a noticeable simplification of formula (\ref{boundaryprob}) occurs for $p=q$, in which case we can write
\begin{align}\label{trenitalia}\mathbb{P}\Big(\big(X(t),Y(t)\big)\in \partial S_{ct}\Big)=2e^{-\lambda t(1-p)}-e^{-\lambda t}.\end{align} For $p=q=\frac{1}{3}$, formula (\ref{trenitalia}) is consistent with the results obtained by Cinque and Orsingher \cite{cinque}.
In the special case $p=\frac{1}{2}$, both formulas (\ref{trenitaliaprima}) and (\ref{trenitalia}) reduce to $$\mathbb{P}\Big(\big(X(t),Y(t)\big)\in \partial S_{ct}\Big)=2e^{-\frac{\lambda t}{2}}-e^{-\lambda t}$$ which coincides with the result obtained by Orsingher \cite{orsingher2000}.\\

We now study the distribution of $\big(X(t),Y(t)\big)$ on a side of $\partial S_{ct}$. In particular, we consider the side belonging to the first quadrant of the Cartesian plane. Therefore, we want to determine the probability density function
\begin{equation}f(\eta,t)=\mathbb{P}\Big(X(t)+Y(t)=ct,\; X(t)-Y(t)\in \mathop{d\eta}\Big)/\mathop{d\eta},\qquad -ct<\eta< ct.\label{boundarydistr}\end{equation}
\noindent We start by considering, for $j=0,1$, the density functions
$$f_j(\eta,t)=\mathbb{P}\Big(X(t)+Y(t)=ct,\; X(t)-Y(t)\in d\eta,\;D(t)=d_0\Big)/\mathop{d\eta}.$$

\noindent Clearly, we have that \begin{equation}f(\eta,t)=f_0(\eta,t)+f_1(\eta,t).\label{boundarysum}\end{equation}
\noindent By means of standard methods it can be shown that
\begin{equation}\label{boundaryeqs}
\begin{dcases}
f_0(\eta,t+\mathop{dt})=f_0(\eta-c\mathop{dt},t)(1-\lambda \mathop{dt})+f_1(\eta,t)\lambda q\mathop{dt}+o(\mathop{dt})\\
f_1(\eta,t+\mathop{dt})=f_1(\eta+c\mathop{dt},t)(1-\lambda \mathop{dt})+f_0(\eta,t)\lambda p\mathop{dt}+o(\mathop{dt})
\end{dcases}\end{equation}
\noindent Therefore, by performing a first-order Taylor expansion of the equations in formula (\ref{boundaryeqs}), we obtain that $f_0$ and $f_1$ satisfy the system of partial differential equations
\begin{equation}\label{boundarydiffeqs}
\begin{dcases}
\frac{\partial f_0}{\partial t}=-c\frac{\partial f_0}{\partial \eta}+\lambda q f_1-\lambda f_0\\
\frac{\partial f_1}{\partial t}=c\frac{\partial f_1}{\partial \eta}+\lambda p f_0-\lambda f_1
\end{dcases}\end{equation}
\noindent with initial condition $f_0(\eta,0)=f_1(\eta,0)=\frac{1}{4}\,\delta(\eta).$ The relationship (\ref{boundarysum}) together with the system of differential equations (\ref{boundarydiffeqs}) implies that the probability density function $f$ satisfies, for $-ct<\eta<ct$, the partial differential equation
\begin{equation}\label{boundarypde}\left(\frac{\partial^2}{\partial t^2}+2\lambda \frac{\partial}{\partial t}-c^2\frac{\partial^2}{\partial \eta^2}+\lambda^2\left(1-pq\right)\right)f=0\end{equation}
\noindent Therefore, the continuous component of the distribution (\ref{boundarydistr}) is given in the following theorem.
\begin{thm}\label{thm:f}The probability density function (\ref{boundarydistr}) is
\begin{equation}f(\eta,t)=\frac{e^{-\lambda t}}{4c}\left[\frac{\lambda(p+q)}{2}\;I_0\left(\frac{\lambda}{c}\sqrt{pq}\sqrt{c^2t^2-\eta^2}\right)+\;\frac{\partial}{\partial t}I_0\left(\frac{\lambda}{c}\sqrt{pq}\sqrt{c^2t^2-\eta^2}\right)\right],\qquad\lvert\eta\lvert<ct.\label{boundarydistrthm}\end{equation}\end{thm}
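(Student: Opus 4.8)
The plan is to solve in closed form the Cauchy problem associated with the partial differential equation (\ref{boundarypde}). First I would remove the first-order time derivative and absorb part of the zero-order term by setting $f(\eta,t)=e^{-\lambda t}w(\eta,t)$; a direct substitution shows that (\ref{boundarypde}) is equivalent to the Klein--Gordon-type equation $\frac{\partial^2 w}{\partial t^2}-c^2\frac{\partial^2 w}{\partial\eta^2}=\lambda^2 pq\,w$, and since $\lambda^2 pq\ge0$ its fundamental solution involves the \emph{modified} Bessel function $I_0$, which is exactly the function appearing in (\ref{boundarydistrthm}). The initial data for $w$ are read off from the system (\ref{boundarydiffeqs}): from $w=e^{\lambda t}(f_0+f_1)$ and $f_0(\eta,0)=f_1(\eta,0)=\frac14\delta(\eta)$ we get $w(\eta,0)=\frac12\delta(\eta)$, while $\frac{\partial w}{\partial t}(\eta,0)=\lambda f(\eta,0)+\frac{\partial f}{\partial t}(\eta,0)$; adding the two equations in (\ref{boundarydiffeqs}) at $t=0$ the transport terms $\mp c\,\partial_\eta f_j$ cancel (because $f_0(\cdot,0)=f_1(\cdot,0)$), so that $\frac{\partial f}{\partial t}(\eta,0)=\frac{\lambda(p+q-2)}{4}\delta(\eta)$ and hence $\frac{\partial w}{\partial t}(\eta,0)=\frac{\lambda(p+q)}{4}\delta(\eta)$.

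Next I would apply the classical generalized d'Alembert (Poisson) representation for the equation $v_{tt}-c^2v_{\eta\eta}=\mu^2 v$: its solution with data $v(\cdot,0)=0$, $v_t(\cdot,0)=\psi$ is $\frac{1}{2c}\int_{\eta-ct}^{\eta+ct}I_0\!\left(\tfrac{\mu}{c}\sqrt{c^2t^2-(\eta-s)^2}\right)\psi(s)\,ds$, and its solution with data $v(\cdot,0)=\varphi$, $v_t(\cdot,0)=0$ is the $t$-derivative of the same expression with $\psi$ replaced by $\varphi$. Superposing these with $\varphi=\frac12\delta$, $\psi=\frac{\lambda(p+q)}{4}\delta$ and $\mu=\lambda\sqrt{pq}$, and then restricting to the interior $|\eta|<ct$ — where the $\delta$-terms generated by differentiating the indicator $\mathds{1}_{\{|\eta|<ct\}}$, which carry the atoms of mass $\tfrac14 e^{-\lambda t}$ sitting on the two vertices of $\partial S_{ct}$, do not contribute — one gets $w(\eta,t)=\frac{1}{4c}\frac{\partial}{\partial t}I_0\!\left(\tfrac{\lambda\sqrt{pq}}{c}\sqrt{c^2t^2-\eta^2}\right)+\frac{\lambda(p+q)}{8c}I_0\!\left(\tfrac{\lambda\sqrt{pq}}{c}\sqrt{c^2t^2-\eta^2}\right)$; multiplying back by $e^{-\lambda t}$ produces precisely (\ref{boundarydistrthm}).

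A slightly different and perhaps cleaner route, which also yields the characteristic function announced in the introduction, is to Fourier-transform the system (\ref{boundarydiffeqs}) in $\eta$. This gives a $2\times2$ linear ODE system whose coefficient matrix has eigenvalues $-\lambda\pm\sqrt{\lambda^2 pq-\alpha^2c^2}$; solving it with the initial values computed above yields $\mathbb{E}\big[e^{i\alpha(X(t)-Y(t))}\mathds{1}_{\{X(t)+Y(t)=ct\}}\big]$ in the stated form, and Fourier inversion — after subtracting the $\frac{e^{-\lambda t}}{2}\cos(\alpha ct)$ contributed by the two vertex atoms and using the standard pair $\int_{-ct}^{ct}e^{i\alpha\eta}I_0\!\left(\tfrac{\mu}{c}\sqrt{c^2t^2-\eta^2}\right)d\eta=\frac{2c}{\sqrt{\mu^2-\alpha^2c^2}}\sinh\!\left(t\sqrt{\mu^2-\alpha^2c^2}\right)$ — recovers (\ref{boundarydistrthm}).

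The step I expect to demand the most care is the bookkeeping along the moving boundary $|\eta|=ct$: one has to separate cleanly the singular part of the solution, which is concentrated on the light cone and must reproduce the already known vertex probabilities $\tfrac14 e^{-\lambda t}$, from the absolutely continuous part, and one has to justify the formal operations involved — differentiating the Bessel representation across the indicator $\mathds{1}_{\{|\eta|<ct\}}$, or Fourier-transforming a system that carries atoms at $\eta=\pm ct$. Everything else is routine verification, ultimately resting on the classical identity that $I_0\!\left(\tfrac{\mu}{c}\sqrt{c^2t^2-\eta^2}\right)$ solves $v_{tt}-c^2v_{\eta\eta}=\mu^2 v$.
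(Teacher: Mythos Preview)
Your proof is correct and takes a genuinely different route from the paper's. Both arguments begin with the same substitution $f=e^{-\lambda t}w$ that reduces (\ref{boundarypde}) to the Klein--Gordon equation $w_{tt}-c^2w_{\eta\eta}=\lambda^2 pq\,w$, but from there they diverge. The paper performs the change of variables $z=\sqrt{c^2t^2-\eta^2}$ to obtain a Bessel ODE, writes down the ansatz $f=e^{-\lambda t}\big[A\,I_0+B\,\partial_t I_0\big]$, and then determines $A$ and $B$ by integrating over $(-ct,ct)$ and matching against the already--computed side probability (\ref{boundaryprobside}). You instead extract the Cauchy data $w(\cdot,0)=\tfrac12\delta$, $w_t(\cdot,0)=\tfrac{\lambda(p+q)}{4}\delta$ directly from the system (\ref{boundarydiffeqs}) and feed them into the classical Riemann/Poisson representation for the Klein--Gordon equation, which delivers the Bessel expression at once.

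Your approach is more self-contained: it does not rely on the prior combinatorial computation of $\mathbb{P}\big(X(t)+Y(t)=ct,\,|X(t)-Y(t)|<ct\big)$, and as you observe it recovers the vertex atoms $\tfrac14 e^{-\lambda t}\delta(\eta\mp ct)$ for free as the singular part generated when $\partial_t$ hits the indicator $\mathds{1}_{\{|\eta|<ct\}}$. The paper's method, on the other hand, avoids having to quote the Klein--Gordon Riemann function and the distributional bookkeeping at the light cone that you correctly flag as the delicate step; its price is having to guess the two-parameter form of the solution and to have Theorem~1 already in hand. The Fourier-transform variant you sketch at the end is essentially what the paper carries out in the subsequent theorem to obtain the characteristic function (\ref{boundarydistrchf}).
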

\begin{proof}In view of equation (\ref{boundarypde}), the function \begin{equation}\widetilde{f}(\eta,t)=e^{\lambda t}f(\eta,t)\label{fcheck}\end{equation}
satisfies the partial differential equation
\begin{equation}\label{boundarypdecheck}\frac{\partial^2\widetilde{f}}{\partial t^2}-c^2\frac{\partial^2\widetilde{f}}{\partial \eta^2}=\lambda^2pq\widetilde{f}.\end{equation}
The change of variables \begin{equation}z=\sqrt{c^2t^2-\eta^2}\label{zvar}\end{equation} reduces equation (\ref{boundarypdecheck}) to the Bessel equation
\begin{equation*}\label{besself}\frac{d^2\widetilde{f}}{z^2}+\frac{1}{z}\frac{d\widetilde{f}}{dz}-\frac{\lambda^2}{c^2}pq\widetilde{f}=0\end{equation*}
whose general solution reads \begin{equation}\label{basselfsol}\widetilde{f}(z)=A\;I_0\left(\frac{\lambda}{c}\sqrt{pq}z\right)+B\;K_0\left(\frac{\lambda}{c}\sqrt{pq}z\right).\end{equation}
We disregard the term involving the modified Bessel function of the second kind $K_0(\cdot)$ because it would make the density function $f(\eta,t)$ non-integrable in proximity of the endpoints $\eta=\pm ct$. Therefore, by inverting the transformations (\ref{fcheck}) and (\ref{zvar}), we express the solution to equation (\ref{boundarypde}) in the form 
\begin{equation}\label{boundarygensol}f(\eta,t)=e^{-\lambda t}\left[A\; I_0\left(\frac{\lambda}{c}\sqrt{pq}\sqrt{c^2t^2-\eta^2}\right)+B\;\frac{\partial}{\partial t}I_0\left(\frac{\lambda}{c}\sqrt{pq}\sqrt{c^2t^2-\eta^2}\right)\right]\end{equation}
Comparing formulas (\ref{basselfsol}) and (\ref{boundarygensol}), observe that we added a term involving the time derivative of the Bessel function in order to add flexibility to the general solution to equation (\ref{boundarypde}). The introduction of the additional term can be performed because equation (\ref{boundarypde}) is homogeneous with respect to the time variable. We now have to determine the coefficients $A$ and $B$ in the expression (\ref{boundarygensol}). For this purpose, we use the well-known relation
\begin{equation}\int_{-ct}^{ct}I_0\left(K\;\sqrt{c^2t^2-\eta^2}\right)\mathop{d\eta}=\frac{1}{K}\left(e^{Kct}-e^{-Kct}\right),\qquad c,t>0\label{IntI}\end{equation}
from which we also obtain
\begin{equation}\label{IntdI}\int_{-ct}^{ct}\frac{\partial}{\partial t}I_0\left(K\;\sqrt{c^2t^2-\eta^2}\right)\mathop{d\eta}=c\left(e^{Kct}+e^{-Kct}-2\right),\qquad c,t>0.\end{equation}
Using the above formulas and integrating the expression (\ref{boundarygensol}), we obtain that
\begin{equation}\label{boundarygensolint}\int_{-ct}^{ct}f(\eta,t)\mathop{d\eta}=e^{-\lambda t}\left\{\left(Bc+\frac{Ac}{\lambda\sqrt{pq}}\right)e^{\lambda t\sqrt{pq}}+\left(Bc-\frac{Ac}{\lambda\sqrt{pq}}\right)e^{-\lambda t\sqrt{pq}}-2Bc\right\}.\end{equation}
Comparing formulas (\ref{boundaryprobside}) and (\ref{boundarygensolint}) yields
$$A=\frac{\lambda(p+q)}{8c},\qquad B=\frac{1}{4c}.$$
which completes the proof.
\end{proof}

In the following theorem, we derive an explicit expression for the characteristic function of $\big(X(t),Y(t)\big)$ on the boundary of the support.
\begin{thm}The characteristic function of $\big(X(t),Y(t)\big)$ on $\partial S_{ct}$ is
\begin{align}\label{boundarydistrchf}\mathbb{E}\left[e^{i\alpha\left(X(t)-Y(t)\right)}\;\mathds{1}_{\{X(t)+Y(t)=ct\}}\right]=\frac{e^{-\lambda t}}{4}&\left[\left(1+\frac{\lambda(p+q)}{2\sqrt{\lambda^2pq-\alpha^2c^2}}\right)e^{t\sqrt{\lambda^2pq-\alpha^2c^2}}\right.\nonumber\\+&\left.\left(1-\frac{\lambda(p+q)}{2\sqrt{\lambda^2pq-\alpha^2c^2}}\right)e^{-t\sqrt{\lambda^2pq-\alpha^2c^2}}\right]\end{align}
\end{thm}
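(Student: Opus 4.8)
The plan is to pass to the Fourier transform in the variable $\eta=X(t)-Y(t)$ at the level of the first-order system \eqref{boundarydiffeqs}, rather than integrating the density \eqref{boundarydistrthm} directly. For $j=0,1$ set
\[
\hat f_j(\alpha,t)=\mathbb{E}\Big[e^{i\alpha(X(t)-Y(t))}\,\mathds{1}_{\{X(t)+Y(t)=ct,\,D(t)=d_j\}}\Big],
\]
so that the quantity in \eqref{boundarydistrchf} is $\hat F(\alpha,t):=\hat f_0(\alpha,t)+\hat f_1(\alpha,t)$. The advantage of working with the system is that the $\hat f_j$ automatically incorporate the two vertex atoms — the no-change trajectories deposit mass $\tfrac14 e^{-\lambda t}$ at $(ct,0)$ (i.e.\ $\eta=ct$, contributing to $\hat f_0$) and at $(0,ct)$ (i.e.\ $\eta=-ct$, contributing to $\hat f_1$) — which the density $f$ of Theorem \ref{thm:f} does not, since it describes only the open side $|\eta|<ct$.

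First I would Fourier-transform \eqref{boundarydiffeqs}, which holds for these sub-probability measures in the sense of distributions (the atomic parts at $\eta=\pm ct$ being transported consistently by the transport terms). Since the measures are supported in $[-ct,ct]$ no boundary contributions arise, and the operators $\mp c\,\partial_\eta$ become multiplication by $\pm i c\alpha$; this gives the linear system
\[
\frac{\partial}{\partial t}\begin{pmatrix}\hat f_0\\ \hat f_1\end{pmatrix}
=\begin{pmatrix} ic\alpha-\lambda & \lambda q\\ \lambda p & -ic\alpha-\lambda\end{pmatrix}\begin{pmatrix}\hat f_0\\ \hat f_1\end{pmatrix},\qquad \hat f_0(\alpha,0)=\hat f_1(\alpha,0)=\tfrac14 .
\]
The coefficient matrix has trace $-2\lambda$ and determinant $\lambda^2(1-pq)+c^2\alpha^2$, hence eigenvalues $-\lambda\pm\mu$ with $\mu:=\sqrt{\lambda^2 pq-\alpha^2 c^2}$. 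Consequently $\hat F(\alpha,t)=e^{-\lambda t}\big(c_+e^{\mu t}+c_-e^{-\mu t}\big)$ for some coefficients $c_\pm=c_\pm(\alpha)$; equivalently $\hat F$ solves the second-order ODE $\hat F_{tt}+2\lambda\hat F_t+\big(\lambda^2(1-pq)+c^2\alpha^2\big)\hat F=0$, which is the Fourier image of \eqref{boundarypde}.

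It then remains to fix $c_+$ and $c_-$ from two initial data. The initial condition gives $\hat F(\alpha,0)=\tfrac12$, so $c_++c_-=\tfrac12$. Summing the two transformed equations at $t=0$ yields $\hat F_t(\alpha,0)=\tfrac14\big[(ic\alpha-\lambda)+\lambda q+\lambda p+(-ic\alpha-\lambda)\big]=-\tfrac{\lambda(2-p-q)}{4}$, whereas the exponential form gives $\hat F_t(\alpha,0)=-\lambda(c_++c_-)+\mu(c_+-c_-)=-\tfrac{\lambda}{2}+\mu(c_+-c_-)$; comparing, $c_+-c_-=\tfrac{\lambda(p+q)}{4\mu}$. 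Solving the two linear relations produces $c_\pm=\tfrac14\big(1\pm\tfrac{\lambda(p+q)}{2\mu}\big)$, which is exactly \eqref{boundarydistrchf}. As a check, at $\alpha=0$ one has $\mu=\lambda\sqrt{pq}$ and the formula collapses to the probability of the closed first-quadrant side, matching \eqref{boundaryprobside} augmented by the two vertex masses $\tfrac14 e^{-\lambda t}$.

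The only genuinely delicate point is the legitimacy of transforming the transport terms: one has to be confident that \eqref{boundarydiffeqs} is valid for the full (atom-carrying) sub-probability measures and that integrating $\partial_\eta$ against $e^{i\alpha\eta}$ leaves no residue at $\eta=\pm\infty$ — both hold by compact support, so the manipulation is justified distributionally without any extra regularity. (Alternatively one could integrate \eqref{boundarydistrthm} against $e^{i\alpha\eta}$ using the exponentially weighted companion of \eqref{IntI}, namely $\int_{-ct}^{ct}e^{i\alpha\eta}I_0\big(\tfrac{\lambda}{c}\sqrt{pq}\,\sqrt{c^2t^2-\eta^2}\big)\,d\eta=\tfrac{2c}{\mu}\sinh(t\mu)$, treat the $\partial_t I_0$ term with the Leibniz rule for a moving interval, and then add back the vertex atoms $\tfrac14 e^{-\lambda t}(e^{i\alpha ct}+e^{-i\alpha ct})$; this is longer but yields the same expression.)
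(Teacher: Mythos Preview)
Your proof is correct and follows essentially the same route as the paper: both Fourier-transform in $\eta$, arrive at the second-order ODE $\hat F_{tt}+2\lambda\hat F_t+\big(\lambda^2(1-pq)+c^2\alpha^2\big)\hat F=0$ with initial data $\hat F(\alpha,0)=\tfrac12$ and $\hat F_t(\alpha,0)=-\tfrac{\lambda(2-p-q)}{4}$ (the paper states these come from \eqref{boundarydiffeqs}, you compute them explicitly), and solve for the coefficients. The only cosmetic difference is that you diagonalise the $2\times2$ transformed system directly, whereas the paper first passes to the scalar equation \eqref{boundarypde} and then transforms; your added remarks on the vertex atoms and the alternative route via \eqref{boundarydistrthm} mirror the consistency check the paper gives immediately after the theorem.
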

\begin{proof}
By using the notation $$\widehat{f}(\alpha,t)=\mathbb{E}\left[e^{i\alpha\left(X(t)-Y(t)\right)}\;\mathds{1}_{\{X(t)+Y(t)=ct\}}\right]$$
equation (\ref{boundarypde}) implies that the following ordinary differential equation is satisfied
\begin{equation}\label{boundarychfode}
\begin{dcases}
\frac{d^2\widehat{f}}{dt^2}+2\lambda\frac{d\widehat{f}}{dt}+\left[c^2\alpha^2+\lambda^2(1-pq)\right]\widehat{f}=0\\
\widehat{f}(\alpha,0)=\frac{1}{2}\\
\frac{d}{d t}\left.\widehat{f}(\alpha,t)\right\lvert_{t=0}=-\frac{\lambda(2-p-q)}{4}
\end{dcases}\end{equation}
where the initial conditions have been determined from the linear system (\ref{boundarydiffeqs}). The general solution to equation (\ref{boundarychfode}) reads
$$\widehat{f}(\alpha,t)=k_0(\alpha)\;e^{-\lambda t+t\sqrt{\lambda^2pq-\alpha^2c^2}}+k_1(\alpha)\;e^{-\lambda t-t\sqrt{\lambda^2pq-\alpha^2c^2}}.$$
\noindent The coefficients $k_0$ and $k_1$ can be determined by using the initial conditions.
\end{proof}
We conclude this section by verifying that the results presented so far are consistent with each other. We start by observing that, for $\alpha=0$, formula (\ref{boundarydistrchf}) reduces to
\begin{align*}\mathbb{P}\Big(X(t)+Y(t)=ct\Big)=\frac{e^{-\lambda t}}{4}\left\{e^{\lambda t\sqrt{pq}}\left(1+\frac{p+q}{2\sqrt{pq}}\right)+e^{-\lambda t\sqrt{pq}}\left(1-\frac{p+q}{2\sqrt{pq}}\right)\right\}\end{align*} which is consistent with formula (\ref{boundaryprobside}). Moreover, a direct calculation of the characteristic function (\ref{boundarydistrchf}) can be performed by observing that
\begin{align}\mathbb{E}&\Big[e^{i\alpha\left(X(t)-Y(t)\right)}\;\mathds{1}_{\{X(t)+Y(t)=ct\}}\Big]\nonumber\\
&\qquad=e^{i\alpha ct}\cdot\mathbb{P}\Big(X(t)=ct,\,Y(t)=0\Big)+e^{-i\alpha ct}\cdot\mathbb{P}\Big(X(t)=0,\,Y(t)=ct\Big)+\int_{-ct}^{ct}e^{i\alpha\eta}f(\eta,\,t)\mathop{d\eta}\nonumber\\
&\qquad=\frac{e^{-\lambda t}}{2}\cos\left(\alpha ct\right)+\int_{-ct}^{ct}e^{i\alpha\eta}f(\eta,\,t)\mathop{d\eta}.\label{verifysec1}\end{align}
By using the expression (\ref{boundarydistrthm}) for the density $f$ and the integral formula \begin{equation}\int_{-ct}^{ct}e^{i\alpha \eta}\,I_0\left(\frac{\lambda}{c}\sqrt{c^2t^2-\eta^2}\right)\mathop{d\eta}=\frac{c}{\sqrt{\lambda^2-\alpha^2c^2}}\left[e^{t\sqrt{\lambda^2-\alpha^2c^2}}-e^{-t\sqrt{\lambda^2-\alpha^2c^2}}\right]\label{besselFourierT}\end{equation} it is a matter of straightforward calculation to prove that formula (\ref{verifysec1}) coincides with the characteristic function (\ref{boundarydistrchf}).

\section{Distribution on the diagonals}
\noindent If reflection is admitted, a degenerate component of the distribution of $\big(X(t),Y(t)\big)$ emerges on the diagonals of $S_{ct}$. In order to verify this, note that the process must start moving along a diagonal at time $t=0$. If the process only performs reflections, it is clear that it continues to move along the same diagonal as it only changes the travel orientation. Therefore, recalling the notation $$Q_{ct}=\left\{(x,y)\in S_{ct}:\;x=0\vee y=0\right\},$$ we have that
\begin{align}\mathbb{P}\Big(\big(X(t),Y&(t)\big)\in Q_{ct}\Big)=\sum_{k=0}^{\infty}\mathbb{P}\Big(\left(X(t),Y(t)\right)\in Q_{ct}\Big\lvert N(t)=k\Big)\,\mathbb{P}\left(N(t)=k\right)\nonumber\\
=&e^{-\lambda t}\sum_{k=0}^{\infty}\frac{(1-p-q)^k(\lambda t)^k}{k!}=e^{-\lambda (p+q)t}.\label{trenitaliatre}\end{align}
\noindent Observe that, if $p+q=1$, the expression (\ref{trenitaliatre}) corresponds to the probability of no changes of direction occurring. In other words, if reflection is not admitted the particle lies on the diagonals if and only if it never changes direction, in which case it must lie in one of the vertices of $S_{ct}$. We also emphasize that formula (\ref{trenitaliatre}) gives the probability of the particle belonging to whole diagonals, including the extremal points. In order to only consider the interior of the diagonals, the probability mass on the vertices of $S_{ct}$ must be subtracted. For instance, is we consider the horizontal diagonal, we have that \begin{align}\mathbb{P}\Big(Y(t)=0,\;-ct<X(t)<ct\Big)=\frac{1}{2}\left[\mathbb{P}\Big(\big(X(t),Y(t)\big)\in Q_{ct}\Big)-\mathbb{P}\big(N(t)=0\big)\right]=\frac{e^{-\lambda (p+q)t}-e^{-\lambda t}}{2}.\label{trenitaliaquattro}\end{align}

We now want to determine the exact distribution of the process on the diagonals $Q_{ct}$. Without loss of generality, we only consider the horizontal diagonal. Therefore we study the probability density function
\begin{equation}g(x,t)=\mathbb{P}\Big( X(t)\in\mathop{dx},\;Y(t)=0\Big)/\mathop{dx},\qquad \lvert x\lvert<ct.\label{diagonaldistr}\end{equation}
\noindent By defining, for $j=0,2$, the densities
\begin{equation*}g_j(x,t)=\mathbb{P}\Big( X(t)\in\mathop{dx},\;Y(t)=0,\;D(t)=d_j\Big)/\mathop{dx},\qquad \lvert x\lvert<ct\end{equation*}
it is clear that \begin{equation}g(x,t)=g_0(x,t)+g_2(x,t).\label{diagonalgsum}\end{equation}
\noindent Moreover, we can write
\begin{equation}\begin{cases}g_0(x,t+\mathop{dt})=g_0(x-c\mathop{dt},t)(1-\lambda\mathop{dt})+g_2(x,t)\lambda(1-p-q)\mathop{dt}+o(\mathop{dt})\\g_2(x,t+\mathop{dt})=g_2(x+c\mathop{dt},t)(1-\lambda\mathop{dt})+g_0(x,t)\lambda(1-p-q)\mathop{dt}+o(\mathop{dt})\end{cases}\label{diagonaleqs}\end{equation}
\noindent which implies that the system of partial differential equations \begin{equation}\begin{dcases}\frac{\partial g_0}{\partial t}=-c\frac{\partial g_0}{\partial x}+\lambda(1-p-q)g_2-\lambda g_0\\\frac{\partial g_2}{\partial t}=c\frac{\partial g_2}{\partial x}+\lambda(1-p-q)g_0-\lambda g_2\end{dcases}\label{diagonaldiffeqs}\end{equation} is satisfied with initial conditions $g_0(x,0)=g_2(x,0)=\frac{1}{4}\delta(x)$. The system (\ref{diagonaldiffeqs})  implies, in view of equation (\ref{diagonalgsum}), that
\begin{equation}\label{diagonalpde}\left(\frac{\partial^2}{\partial t^2}+2\lambda \frac{\partial}{\partial t}-c^2\frac{\partial^2}{\partial x^2}+\lambda^2(p+q)\left(2-p-q\right)\right)g=0\end{equation}
\noindent Therefore, we obtain an explicit expression for the density $g(x,t)$ in the following theorem.
\begin{thm}\label{thm:diagdistr}If $p+q<1$, the probability density function (\ref{diagonaldistr}) is
\begin{align*}\label{boundarydistrthm}g(x,t)=\frac{e^{-\lambda t}}{4c}\Bigg[\lambda(1-p-q)&\; I_0\left(\frac{\lambda}{c}(1-p-q)\sqrt{c^2t^2-x^2}\right)\\
+&\frac{\partial}{\partial t}I_0\left(\frac{\lambda}{c}(1-p-q)\sqrt{c^2t^2-x^2}\right)\Bigg],\qquad \vert x\lvert<ct.\end{align*}
\end{thm}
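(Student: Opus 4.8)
The plan is to mirror the proof of Theorem~\ref{thm:f}. First I would strip the damping and zeroth-order terms from (\ref{diagonalpde}) by setting $\widetilde{g}(x,t)=e^{\lambda t}g(x,t)$. Substituting and using the algebraic identity $-1+(p+q)(2-p-q)=-(1-p-q)^2$, equation (\ref{diagonalpde}) collapses to the hyperbolic equation
\begin{equation*}\frac{\partial^2\widetilde{g}}{\partial t^2}-c^2\frac{\partial^2\widetilde{g}}{\partial x^2}=\lambda^2(1-p-q)^2\,\widetilde{g},\end{equation*}
which is formally (\ref{boundarypdecheck}) with the Bessel parameter $\sqrt{pq}$ replaced by $1-p-q$ (here the assumption $p+q<1$ guarantees positivity). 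The self-similar change of variable $z=\sqrt{c^2t^2-x^2}$ turns this into the modified Bessel equation of order zero with parameter $\frac{\lambda}{c}(1-p-q)$, with general solution $\widetilde{g}=A\,I_0\!\left(\frac{\lambda}{c}(1-p-q)z\right)+B\,K_0\!\left(\frac{\lambda}{c}(1-p-q)z\right)$. As in Section~\ref{sec:boundary} the $K_0$ term is discarded because it would destroy integrability of $g$ near $x=\pm ct$, and the time-homogeneity of (\ref{diagonalpde}) lets me enlarge the candidate family to
\begin{equation*}g(x,t)=e^{-\lambda t}\left[A\,I_0\!\left(\frac{\lambda}{c}(1-p-q)\sqrt{c^2t^2-x^2}\right)+B\,\frac{\partial}{\partial t}I_0\!\left(\frac{\lambda}{c}(1-p-q)\sqrt{c^2t^2-x^2}\right)\right].\end{equation*}

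Next I would determine $A$ and $B$ by integrating this expression over $x\in(-ct,ct)$ using the identities (\ref{IntI}) and (\ref{IntdI}) with $K=\frac{\lambda}{c}(1-p-q)$, so that $e^{-\lambda t}e^{\pm Kct}$ become $e^{-\lambda(p+q)t}$ and $e^{-\lambda(2-p-q)t}$. The result must equal the mass of the continuous part of the distribution on the horizontal diagonal, namely $\tfrac12\big(e^{-\lambda(p+q)t}-e^{-\lambda t}\big)$ from (\ref{trenitaliaquattro}). Matching the coefficients of $e^{-\lambda(p+q)t}$, $e^{-\lambda(2-p-q)t}$ and $e^{-\lambda t}$ gives three relations in the two unknowns: the vanishing of the spurious $e^{-\lambda(2-p-q)t}$ coefficient forces $A=K/4$, and the remaining two relations are mutually consistent and give $B=\frac{1}{4c}$, hence $A=\frac{\lambda(1-p-q)}{4c}$, which is exactly the asserted density.

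The step deserving the most care is, as in Theorem~\ref{thm:f}, justifying that a single scalar normalization pins down both constants: this works only because discarding $K_0$ and imposing self-similarity has already cut the solution space of (\ref{diagonalpde}) down to the correct two-parameter family, and because the integrated candidate contains three linearly independent exponentials in $t$ while the target has only two, so the extra matching equation is confirmatory rather than over-determining. For a fully self-contained argument one could instead verify the claimed $g$ directly against the linear system (\ref{diagonaldiffeqs}) with initial data $g_0(x,0)=g_2(x,0)=\frac14\delta(x)$, or Fourier-transform (\ref{diagonalpde}) in $x$, solve the resulting constant-coefficient ordinary differential equation, and invert using (\ref{besselFourierT}); both routes reproduce the same expression.
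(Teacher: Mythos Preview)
Your proposal is correct and follows essentially the same route as the paper's own proof: substitute $\widetilde g=e^{\lambda t}g$, reduce to a Bessel equation via $z=\sqrt{c^2t^2-x^2}$, discard $K_0$, enlarge by the time-derivative term, and fix $A,B$ by matching the integrated density against (\ref{trenitaliaquattro}). The only minor slip is in the ordering of the matching step---the vanishing of the $e^{-\lambda(2-p-q)t}$ coefficient by itself only yields the relation $A=BcK$, and it is the $e^{-\lambda t}$ coefficient that then pins $B=\tfrac{1}{4c}$ (hence $A=K/4$)---but the conclusion is unchanged.
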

\begin{proof}We start by defining the auxiliary function $$\widetilde{g}(x,t)=e^{\lambda t}g(x,t).$$ Equation (\ref{diagonalpde}) implies that $\widetilde{g}(x,t)$ satisifes the partial differential equation
satisfies the partial differential equation
\begin{equation}\label{diagonalpdecheck}\frac{\partial^2\widetilde{g}}{\partial t^2}-c^2\frac{\partial^2\widetilde{g}}{\partial x^2}=\lambda^2(1-p-q)^2\widetilde{g}.\end{equation}
The change of variables \begin{equation}z=\sqrt{c^2t^2-x^2}\label{zvarg}\end{equation} transforms equation (\ref{diagonalpdecheck}) into the Bessel equation
\begin{equation*}\label{besselg}\frac{d^2\widetilde{g}}{z^2}+\frac{1}{z}\frac{d\widetilde{g}}{dz}-\frac{\lambda^2}{c^2}(1-p-q)^2\widetilde{g}=0\end{equation*}
whose general solution reads \begin{equation}\label{basselgsol}\widetilde{g}(z)=A\;I_0\left(\frac{\lambda}{c}(1-p-q)z\right)+B\;K_0\left(\frac{\lambda}{c}(1-p-q)z\right).\end{equation}
Similarly to theorem \ref{thm:f}, we disregard the term involving the modified Bessel function of the second kind $K_0(\cdot)$ and, since equation (\ref{diagonalpde}) is homogeneous with respect to the time variable, we express its solution in the form 
\begin{equation}\label{diagonalgensol}g(x,t)=e^{-\lambda t}\left[A\; I_0\left(\frac{\lambda}{c}(1-p-q)\sqrt{c^2t^2-x^2}\right)+B\;\frac{\partial}{\partial t}I_0\left(\frac{\lambda}{c}(1-p-q)\sqrt{c^2t^2-x^2}\right)\right]\end{equation}
In order to determine the coefficients $A$ and $B$ in formula (\ref{diagonalgensol}), we use formulas (\ref{IntI}) and (\ref{IntdI}) and we obtain
\begin{equation}\label{diagonalgensolint}\int_{-ct}^{ct}g(x,t)\mathop{dx}=e^{-\lambda t}\left\{\left(Bc+\frac{Ac}{\lambda(1-p-q)}\right)e^{\lambda t(1-p-q)}+\left(Bc-\frac{Ac}{\lambda(1-p-q)}\right)e^{-\lambda t(1-p-q)}-2Bc\right\}.\end{equation}
Comparing formulas (\ref{trenitaliaquattro}) and (\ref{diagonalgensolint}) yields
$$A=\frac{\lambda(1-p-q)}{4c},\qquad B=\frac{1}{4c}.$$
which completes the proof.
\end{proof}
\smallskip

We now derive the characteristic function of the process on on the horizontal diagonal of the support.

\begin{thm}\label{thm:diagchf}The characteristic function of $\big(X(t),Y(t)\big)$ on the horizontal diagonal of the square $S_{ct}$ is
\begin{align}\mathbb{E}\left[e^{i\alpha X(t)}\;\mathds{1}_{\{Y(t)=0\}}\right]=\frac{e^{-\lambda t}}{4}&\left[\left(1+\frac{\lambda(1-p-q)}{\sqrt{\lambda^2(1-p-q)^2-\alpha^2c^2}}\right)e^{t\sqrt{\lambda^2(1-p-q)^2-\alpha^2c^2}}\right.\nonumber\\+&\left.\left(1-\frac{\lambda(1-p-q)}{\sqrt{\lambda^2(1-p-q)^2-\alpha^2c^2}}\right)e^{-t\sqrt{\lambda^2(1-p-q)^2-\alpha^2c^2}}\right].\label{diagonaldistrchf}\end{align}
\end{thm}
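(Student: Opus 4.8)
The plan is to mimic the derivation of the boundary characteristic function carried out in Section~\ref{sec:boundary}: turn the partial differential equation (\ref{diagonalpde}) into a second-order ordinary differential equation for
$$\widehat{g}(\alpha,t)=\mathbb{E}\left[e^{i\alpha X(t)}\;\mathds{1}_{\{Y(t)=0\}}\right],$$
solve it, and fix the two integration constants from initial data. Since $\{Y(t)=0\}$ is the \emph{whole} horizontal diagonal, $\widehat{g}$ also collects the two atoms of mass $\tfrac{e^{-\lambda t}}{4}$ located at the vertices $(\pm ct,0)$, so that $\widehat{g}(\alpha,t)=\tfrac{e^{-\lambda t}}{2}\cos(\alpha ct)+\int_{-ct}^{ct}e^{i\alpha x}g(x,t)\,dx$. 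The most transparent way to handle the atoms and the continuous part simultaneously is to work with the directional characteristic functions $\widehat{g}_j(\alpha,t)=\mathbb{E}\big[e^{i\alpha X(t)}\,\mathds{1}_{\{Y(t)=0,\;D(t)=d_j\}}\big]$, $j=0,2$, which, reading the evolution relations (\ref{diagonaleqs}) at the level of measures and taking Fourier transforms, satisfy the linear system
\begin{equation*}
\begin{dcases}
\dfrac{\partial \widehat{g}_0}{\partial t}=(i\alpha c-\lambda)\,\widehat{g}_0+\lambda(1-p-q)\,\widehat{g}_2,\\[4pt]
\dfrac{\partial \widehat{g}_2}{\partial t}=(-i\alpha c-\lambda)\,\widehat{g}_2+\lambda(1-p-q)\,\widehat{g}_0,
\end{dcases}
\end{equation*}
with $\widehat{g}_0(\alpha,0)=\widehat{g}_2(\alpha,0)=\tfrac14$.

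Adding the two equations, differentiating once more in $t$ and eliminating $\widehat{g}_0-\widehat{g}_2$ (equivalently, Fourier-transforming (\ref{diagonalpde}) in the space variable), I expect to find that $\widehat{g}=\widehat{g}_0+\widehat{g}_2$ solves
\begin{equation*}
\frac{d^2\widehat{g}}{dt^2}+2\lambda\,\frac{d\widehat{g}}{dt}+\Big[c^2\alpha^2+\lambda^2(p+q)(2-p-q)\Big]\widehat{g}=0,
\qquad \widehat{g}(\alpha,0)=\tfrac12,\qquad \frac{d}{dt}\widehat{g}(\alpha,t)\Big|_{t=0}=-\frac{\lambda(p+q)}{2},
\end{equation*}
the second initial condition being a consequence of $\widehat{g}_0(\alpha,0)-\widehat{g}_2(\alpha,0)=0$. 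The characteristic roots are $-\lambda\pm\sqrt{\lambda^2-c^2\alpha^2-\lambda^2(p+q)(2-p-q)}$, and the one algebraic fact that makes the formula come out is the identity $1-(p+q)(2-p-q)=(1-p-q)^2$, which collapses the radicand to $\lambda^2(1-p-q)^2-\alpha^2c^2$ and hence reproduces exactly the exponentials in (\ref{diagonaldistrchf}). Writing $\widehat{g}(\alpha,t)=e^{-\lambda t}\big(k_0\,e^{t\Delta}+k_1\,e^{-t\Delta}\big)$ with $\Delta=\sqrt{\lambda^2(1-p-q)^2-\alpha^2c^2}$ and imposing the two initial conditions gives $k_0=\tfrac14+\tfrac{\lambda(1-p-q)}{4\Delta}$ and $k_1=\tfrac14-\tfrac{\lambda(1-p-q)}{4\Delta}$, which is the claimed expression.

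I do not anticipate a genuine obstacle; the two points that need a word of care are the following. First, one must check that the directional evolution equations, and therefore the resulting ordinary differential equation, remain valid once the vertex atoms are included — this is immediate because a travelling atom $\tfrac{e^{-\lambda t}}{4}\,\delta(x\mp ct)$ already satisfies the transport equation $\partial_t G_0=\mp c\,\partial_x G_0-\lambda G_0$, with no contribution from the reflection term. Second, when $\alpha^2c^2>\lambda^2(1-p-q)^2$ the quantity $\Delta$ is purely imaginary and the choice of branch is irrelevant, since the right-hand side of (\ref{diagonaldistrchf}) is an even function of $\Delta$ and the two exponentials merge into a bounded real oscillation. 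To close, I would record the sanity check $\widehat{g}(0,t)=\tfrac12\,e^{-\lambda(p+q)t}$, which agrees with $\mathbb{P}(Y(t)=0)=\tfrac12\,\mathbb{P}\big((X(t),Y(t))\in Q_{ct}\big)$ from (\ref{trenitaliatre}), and I would note that (\ref{diagonaldistrchf}) can alternatively be recovered directly from Theorem~\ref{thm:diagdistr} together with a Fourier-transform identity of the type (\ref{besselFourierT}), exactly as in the verification performed at the end of Section~\ref{sec:boundary}.
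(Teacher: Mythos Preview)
Your proposal is correct and follows essentially the same route as the paper: Fourier-transform (\ref{diagonalpde}) into a constant-coefficient second-order ODE for $\widehat{g}$, read off the initial data $\widehat{g}(\alpha,0)=\tfrac12$ and $\widehat{g}_t(\alpha,0)=-\tfrac{\lambda(p+q)}{2}$ from the directional system (\ref{diagonaldiffeqs}), and solve via the characteristic roots $-\lambda\pm\sqrt{\lambda^2(1-p-q)^2-\alpha^2c^2}$. If anything, your treatment is slightly more careful than the paper's, since you explicitly argue that the vertex atoms can be carried along in $\widehat{g}$ without altering the ODE, and you make the simplification $1-(p+q)(2-p-q)=(1-p-q)^2$ explicit.
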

\begin{proof}
In view of equation (\ref{diagonalpde}), the characteristic function $$\widehat{g}(\alpha,t)=\mathbb{E}\left[e^{i\alpha X(t)}\;\mathds{1}_{\{Y(t)=0\}}\right]$$
satisfies the ordinary differential equation
\begin{equation}\label{diagonalchfode}
\begin{dcases}
\frac{d^2\widehat{g}}{dt^2}+2\lambda \frac{d\widehat{g}}{dt}+\left[\lambda^2(p+q)\left(2-p-q\right)+\alpha^2c^2\right]\widehat{g}=0\\
\widehat{g}(\alpha,0)=\frac{1}{2}\\
\left.\frac{d}{d t}\widehat{g}(\alpha,t)\right\lvert_{t=0}=-\frac{\lambda(p+q)}{2}.
\end{dcases}\end{equation}
The general solution to equation (\ref{boundarychfode}) is
$$\widehat{g}(\alpha,t)=k_0(\alpha)\;e^{-\lambda t+t\sqrt{\lambda^2(1-p-q)^2-\alpha^2c^2}}+k_1(\alpha)\;e^{-\lambda t-t\sqrt{\lambda^2(1-p-q)^2-\alpha^2c^2}}.$$
\noindent The coefficients $k_0$ and $k_1$ can be obtained by using the initial conditions.
\end{proof}
We emphasize that the consistency of the results presented in this section can be proved by using a similar approach to that of section \ref{sec:boundary}. In particular, setting $\alpha=0$ in formula (\ref{diagonaldistrchf}) yields the probability (\ref{trenitaliatre}) while the characteristic function (\ref{diagonaldistrchf}) can be obtained direcly by using theorem \ref{thm:diagdistr} and the integral (\ref{besselFourierT}).

\section{Distribution in the interior of the domain}
\noindent In this section, we study the distribution of the process $\big(X(t),Y(t)\big)$ in the interior of its domain, that is the interior of the set
$$S_{ct}=\left\{(x,y)\in\mathbb{R}^2:\;\lvert x\lvert+\lvert y\lvert\le ct\right\}.$$
\noindent Therefore, we are interested in studying the function $u(x,y,t)$ defined by the relationship
\begin{equation}u(x,y,t)\mathop{dx}\mathop{dy}=\mathbb{P}\Big(X(t)\in\mathop{dx},\;Y(t)\in\mathop{dy}\Big),\qquad \lvert x\lvert+\lvert y\lvert< ct.\label{udef}\end{equation}
\noindent We emphasize that when the process admits reflection, that is when $p+q<1$, the distribution of $\big(X(t),Y(t)\big)$ has a degenerate component on the diagonals of the square $S_{ct}$. Therefore, if reflection is possible we study the function $u(x,y,t)$ under the implicit assumption that $x\neq0$ and $y\neq0$ since the definition (\ref{udef}) does not make sense otherwise.
\noindent In order to study the distribution (\ref{udef}) we consider, for $j=0,1,2,3$, the probability density functions $u_j(x,y,t)$ defined as
$$u_j(x,y,t)\mathop{dx}\mathop{dy}=\mathbb{P}\Big(X(t)\in\mathop{dx},\;Y(t)\in\mathop{dy},\;D(t)=d_j\Big).$$
\noindent By means of standard techniques it can be shown that the following linear system of equations is satisfied by $u_j(x,y,t),\;j=0,1,2,3$
\begin{equation}\label{interiordiffeqs}
\begin{dcases}
\dfrac{\partial u_0}{\partial t}=-c\dfrac{\partial u_0}{\partial x}+\lambda q u_1+\lambda (1-p-q) u_2+\lambda p u_3-\lambda u_0\\
\dfrac{\partial u_1}{\partial t}=-c\dfrac{\partial u_1}{\partial y}+\lambda p u_0+\lambda q u_2+\lambda (1-p-q) u_3-\lambda u_1\\
\dfrac{\partial u_2}{\partial t}=c\dfrac{\partial u_2}{\partial x}+\lambda (1-p-q) u_0+\lambda p u_1+\lambda q u_3-\lambda u_2\\
\dfrac{\partial u_3}{\partial t}=c\dfrac{\partial u_3}{\partial y}+\lambda q u_0+\lambda (1-p-q) u_1+\lambda p u_2-\lambda u_3
\end{dcases}
\end{equation}
\noindent with initial conditions $u_j(x,y,0)=\frac{1}{4}\,\delta(x)\,\delta(y)$ for $j=0,1,2,3$. Moreover, since $u(x,y,t)=\sum_{j=0}^3u_j(x,y,t)$, the system of equations (\ref{interiordiffeqs}) implies that $u(x,y,t)$ satisfies a partial differential equation which can be represented in the form
%\begin{equation}\label{upde}
%\text{det}
%\begin{pmatrix}
%\frac{\partial}{\partial t}+\lambda+c\frac{\partial}{\partial x} & -\lambda q& -\lambda(1-p-q)&-\lambda p \\
%-\lambda p&\frac{\partial}{\partial t}+\lambda+c\frac{\partial}{\partial y} & -\lambda q& -\lambda(1-p-q) \\
%-\lambda(1-p-q) & -\lambda p&\frac{\partial}{\partial t}+\lambda-c\frac{\partial}{\partial x}  &-\lambda q&  \\
% -\lambda q& -\lambda(1-p-q)&-\lambda p &\frac{\partial}{\partial t}+\lambda-c\frac{\partial}{\partial y} 
%\end{pmatrix}u=0.
%\end{equation}
\begin{align}\Bigg\{\Bigg(\left(\frac{\partial}{\partial t}+\lambda\right)&^2-c^2\frac{\partial^2}{\partial x^2}\Bigg)\left(\left(\frac{\partial}{\partial t}+\lambda\right)^2-c^2\frac{\partial^2}{\partial y^2}\right)-4\lambda^2pq\left(\frac{\partial}{\partial t}+\lambda\right)^2\nonumber\\
-&\lambda^2(1-p-q)^2\left(2\left(\frac{\partial}{\partial t}+\lambda\right)^2-c^2\Delta\right)-4\lambda^3(p^2+q^2)(1-p-q)\left(\frac{\partial}{\partial t}+\lambda\right)\nonumber\\
\;&\qquad\quad+\lambda^4\left[(1-p-q)^2-2pq\right]^2-\lambda^4(p^2+q^2)^2\Bigg\}u=0.\label{upde}\end{align}
\begin{thm}\label{hydrolimthmbm}For $\lambda,c\to+\infty$ with $\frac{c^2}{\lambda}\to1$, the partial differential equation (\ref{upde}) becomes \begin{equation}\label{updelimit}\frac{\partial u}{\partial t}=\frac{1}{4}\;\frac{(1-p)+(1-q)}{(1-p)^2+(1-q)^2}\;\Delta u\end{equation}
\noindent where $\Delta=\frac{\partial^2}{\partial x^2}+\frac{\partial^2}{\partial y^2}$ is the Laplacian.\end{thm}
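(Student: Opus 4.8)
The plan is to obtain the hydrodynamic limit directly from the operator in (\ref{upde}), by expanding it in decreasing powers of $\lambda$ under the scaling $\lambda\to\infty$, $c^2/\lambda\to1$. Write $D=\partial_t+\lambda$ and, for brevity, $a=1-p-q$. In the limit $u$ lives on the slow scale, so $\partial_t u$ and the spatial derivatives of $u$ stay bounded while $\lambda,c^2\to\infty$; substituting $c^2=\lambda\big(1+o(1)\big)$ and using $D^k=\lambda^k+k\lambda^{k-1}\partial_t+O(\lambda^{k-2})$ together with $c^2\partial_x^2=\lambda\partial_x^2+o(\lambda)$ (and the analogue for $\partial_y^2$, $\Delta$), each of the five blocks inside the braces of (\ref{upde}) becomes a polynomial in $\lambda$ whose two leading coefficients involve only $\partial_t$ and $\Delta$. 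For instance,
\[ \big(D^2-c^2\partial_x^2\big)\big(D^2-c^2\partial_y^2\big)=\lambda^4+\lambda^3\big(4\partial_t-\Delta\big)+O(\lambda^2),\qquad -4\lambda^2 pq\,D^2=-4\lambda^4 pq-8\lambda^3 pq\,\partial_t+O(\lambda^2), \]
and analogously $-\lambda^2 a^2(2D^2-c^2\Delta)=-2\lambda^4a^2-\lambda^3a^2(4\partial_t-\Delta)+O(\lambda^2)$ and $-4\lambda^3(p^2+q^2)aD=-4\lambda^4(p^2+q^2)a-4\lambda^3(p^2+q^2)a\,\partial_t$.

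First I would collect the coefficient of $\lambda^4$ acting on $u$: it is the pure constant $1-4pq-2a^2-4(p^2+q^2)a+\big[(a^2-2pq)^2-(p^2+q^2)^2\big]$. The first key step is to check that this vanishes identically in $p,q$; writing $s=p+q$, $r=pq$, so that $a^2=1-2s+s^2$ and $p^2+q^2=s^2-2r$, one finds every monomial in $s$ and $r$ cancels. This cancellation is precisely what makes (\ref{upde}) degenerate to a parabolic, rather than genuinely fourth-order, equation in the limit. The leading surviving contribution is therefore the coefficient of $\lambda^3$, which — summing the five expansions above — is the operator $\big[\,4-8pq-4a^2-4(p^2+q^2)a\,\big]\partial_t-(1-a^2)\Delta$. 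Dividing (\ref{upde}) by $\lambda^3$ and letting $\lambda\to\infty$, all discarded terms being $O(\lambda^{-1})$ relative to those retained, yields $\big[4-8pq-4a^2-4(p^2+q^2)a\big]\partial_t u=(1-a^2)\Delta u$.

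It then remains to identify the two coefficients. Since $1-a=p+q$ and $1+a=(1-p)+(1-q)$, we have $1-a^2=(p+q)\big[(1-p)+(1-q)\big]$; a short reduction (again transparent in the variables $s,r$, using $(1-p)^2+(1-q)^2=2-2s+s^2-2r$) gives $4-8pq-4a^2-4(p^2+q^2)a=4(p+q)\big[(1-p)^2+(1-q)^2\big]$. Dividing through by this last quantity, which is strictly positive in the only regime where the interior density is non-degenerate, namely $p+q>0$, produces exactly (\ref{updelimit}).

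The main obstacle is purely computational: arranging the expansion of the five blocks of (\ref{upde}) so that the order-$\lambda^4$ cancellation is manifest and the order-$\lambda^3$ operator emerges in the factored form above; no new idea beyond the formal hydrodynamic scaling is required. An equivalent route, which also makes the probabilistic content explicit, is to Fourier transform in $(x,y)$, turning (\ref{upde}) into a quartic in the time-exponent $\mu$ with coefficients depending on $\alpha,\beta$: three of its roots are fast, transient modes that are annihilated in the limit, while the fourth obeys $\mu=-\tfrac14\frac{(1-p)+(1-q)}{(1-p)^2+(1-q)^2}(\alpha^2+\beta^2)+o(1)$, i.e. it is the Fourier symbol of the Gaussian semigroup with the stated diffusivity — the analytic counterpart of the convergence of $\big(X(t),Y(t)\big)$ to a planar Brownian motion with independent components.
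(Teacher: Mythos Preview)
Your proof is correct and follows exactly the approach sketched in the paper's own proof, which consists of a single sentence: divide (\ref{upde}) by $\lambda^3$, simplify so that the $O(\lambda)$ terms cancel, and take the limit. You have carried out in full the expansion the paper leaves to the reader, correctly verifying the $\lambda^4$ cancellation and the algebraic identities $1-a^2=(p+q)\big[(1-p)+(1-q)\big]$ and $4-8pq-4a^2-4(p^2+q^2)a=4(p+q)\big[(1-p)^2+(1-q)^2\big]$; the Fourier-transform remark at the end is a nice complement but not needed.
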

\begin{proof}
\noindent The theorem is proved by dividing equation (\ref{upde}) by $\lambda^3$, simplifying the expression for eliminating all the terms which grow as a multiple of $\lambda$ and taking the limit.
\end{proof}
The interpretation of theorem \ref{hydrolimthmbm} is that the process $\big(X(t),Y(t)\big)$ becomes, in the hydrodynamic limit, a planar Brownian motion with independent components and variance depending on $p$ and $q$. By taking into account the natural constraints to which $p$ and $q$ are subject, a constrained maximization problem can be solved in order to prove that the diffusion coefficient of equation (\ref{updelimit}) is maximized for $p=q=\frac{1}{2}$. Intuitively, this is due to the fact that both reflection and the vorticity effect induced by the condition $p\neq q$ tend to prevent the particle from moving too far from the starting point. The case $p=q=\frac{1}{2}$ is the only case in which both reflection and the vorticity effect are absent, which implies that the diffusion coefficient is maximized.\\
We now examine the special case in which the process $\big(X(t),Y(t)\big)$ does not admit reflection. By setting $q=1-p$, it can be easily verified that the partial differential equation (\ref{upde}) reduces to
\begin{equation}\label{norefpde}\left(\left(\frac{\partial}{\partial t}+\lambda\right)^4-\left(c^2\Delta+4\lambda^2p(1-p)\right)\left(\frac{\partial}{\partial t}+\lambda\right)^2+c^2\frac{\partial^4}{\partial x^2\partial y^2}\right)u=\lambda^4(2p-1)^2u.\end{equation}
\noindent Even in this simpler case, finding the distribution of $\big(X(t),Y(t)\big)$ in closed form is a difficult task. However, we are able to obtain an explicit form of the characteristic function as shown in the following theorem.
\begin{thm}If $p+q=1$, the characteristic function of $\big(X(t),Y(t)\big)$ in the interior of $\partial S_{ct}$ is
%\begin{align}\mathbb{E}\Big[e^{i\left(\alpha X(t)+\beta Y(t)\right)}&\;\mathds{1}_{\{\lvert X(t)\lvert+\lvert Y(t)\lvert<ct\}}\Big]\nonumber\\
%=&k_0(\alpha,\beta)e^{-\lambda t+A(\alpha,\beta)t+B(\alpha,\beta)t}+k_1(\alpha,\beta)e^{-\lambda t+A(\alpha,\beta)t-B(\alpha,\beta)t}\nonumber\\
%\;&\qquad+k_2(\alpha,\beta)e^{-\lambda t-A(\alpha,\beta)t-B(\alpha,\beta)t}+k_3(\alpha,\beta)e^{-\lambda t-A(\alpha,\beta)t+B(\alpha,\beta)t}\end{align}
\begin{align}\mathbb{E}\Big[&e^{i\left(\alpha X(t)+\beta Y(t)\right)}\Big]\nonumber\\
&\;\;\;\;=e^{-\lambda t}\Bigg\{k_0(\alpha,\beta)\,\cosh\Big(A(\alpha,\beta)\mathop{t}+B(\alpha,\beta)\mathop{t}\Big)+k_1(\alpha,\beta)\,\sinh\Big(A(\alpha,\beta)\mathop{t}+B(\alpha,\beta)\mathop{t}\Big)\nonumber\\
&\qquad\;\;\;\;-k_2(\alpha,\beta)\,\cosh\Big(A(\alpha,\beta)\mathop{t}-B(\alpha,\beta)\mathop{t}\Big)-k_3(\alpha,\beta)\,\sinh\Big(A(\alpha,\beta)\mathop{t}-B(\alpha,\beta)\mathop{t}\Big)\Bigg\}\label{interiordistrchf}\end{align}%\label{interiordistrchf}
\noindent where 
\begin{equation}A(\alpha,\beta)=\frac{1}{2}\sqrt{4\lambda^2p(1-p)-c^2(\alpha^2+\beta^2)+2\sqrt{c^4\alpha^2\beta^2-\lambda^4(2p-1)^2}}\label{A}\end{equation}
\begin{equation}B(\alpha,\beta)=\frac{1}{2}\sqrt{4\lambda^2p(1-p)-c^2(\alpha^2+\beta^2)-2\sqrt{c^4\alpha^2\beta^2-\lambda^4(2p-1)^2}}\label{B}\end{equation}
\noindent and 
%\begin{align}
%k_0(\alpha,\beta)=&\frac{2\lambda^2-c^2(\alpha^2+\beta^2)-2\left(A(\alpha,\beta)-B(\alpha,\beta)\right)^2}{16A(\alpha,\beta)B(\alpha,\beta)}+\frac{\lambda^3-\lambda c^2(\alpha^2+\beta^2)-\lambda\left(A(\alpha,\beta)-B(\alpha,\beta)\right)^2}{8A(\alpha,\beta)B(\alpha,\beta)\left(A(\alpha,\beta)+B(\alpha,\beta)\right)}\nonumber\\
%k_1(\alpha,\beta)=&-\frac{2\lambda^2-c^2(\alpha^2+\beta^2)-2\left(A(\alpha,\beta)+B(\alpha,\beta)\right)^2}{16A(\alpha,\beta)B(\alpha,\beta)}-\frac{\lambda^3-\lambda c^2(\alpha^2+\beta^2)-\lambda\left(A(\alpha,\beta)+B(\alpha,\beta)\right)^2}{8A(\alpha,\beta)B(\alpha,\beta)\left(A(\alpha,\beta)-B(\alpha,\beta)\right)}\nonumber\\
%k_2(\alpha,\beta)=&\frac{2\lambda^2-c^2(\alpha^2+\beta^2)-2\left(A(\alpha,\beta)-B(\alpha,\beta)\right)^2}{16A(\alpha,\beta)B(\alpha,\beta)}-\frac{\lambda^3-\lambda c^2(\alpha^2+\beta^2)-\lambda\left(A(\alpha,\beta)-B(\alpha,\beta)\right)^2}{8A(\alpha,\beta)B(\alpha,\beta)\left(A(\alpha,\beta)+B(\alpha,\beta)\right)}\nonumber\\
%k_3(\alpha,\beta)=&-\frac{2\lambda^2-c^2(\alpha^2+\beta^2)-2\left(A(\alpha,\beta)+B(\alpha,\beta)\right)^2}{16A(\alpha,\beta)B(\alpha,\beta)}+\frac{\lambda^3-\lambda c^2(\alpha^2+\beta^2)-\lambda\left(A(\alpha,\beta)+B(\alpha,\beta)\right)^2}{8A(\alpha,\beta)B(\alpha,\beta)\left(A(\alpha,\beta)-B(\alpha,\beta)\right)}\nonumber
%\end{align}
\begin{align*}
k_0(\alpha,\beta)=&\frac{2\lambda^2-c^2(\alpha^2+\beta^2)-2\left(A(\alpha,\beta)-B(\alpha,\beta)\right)^2}{8A(\alpha,\beta)B(\alpha,\beta)}\nonumber\\
k_1(\alpha,\beta)=&\frac{\lambda^3-\lambda c^2(\alpha^2+\beta^2)-\lambda\left(A(\alpha,\beta)-B(\alpha,\beta)\right)^2}{4A(\alpha,\beta)B(\alpha,\beta)\left(A(\alpha,\beta)+B(\alpha,\beta)\right)}\nonumber\\
k_2(\alpha,\beta)=&\frac{2\lambda^2-c^2(\alpha^2+\beta^2)-2\left(A(\alpha,\beta)+B(\alpha,\beta)\right)^2}{8A(\alpha,\beta)B(\alpha,\beta)}\nonumber\\
k_3(\alpha,\beta)=&\frac{\lambda^3-\lambda c^2(\alpha^2+\beta^2)-\lambda\left(A(\alpha,\beta)+B(\alpha,\beta)\right)^2}{4A(\alpha,\beta)B(\alpha,\beta)\left(A(\alpha,\beta)-B(\alpha,\beta)\right)}.\nonumber
\end{align*}
\end{thm}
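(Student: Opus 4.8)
The plan is to pass to the Fourier transform in the space variables and reduce everything to a constant-coefficient linear ODE in $t$. Write
\[\phi(\alpha,\beta,t)=\mathbb{E}\big[e^{i(\alpha X(t)+\beta Y(t))}\big]=\sum_{j=0}^{3}\phi_j(\alpha,\beta,t),\qquad \phi_j(\alpha,\beta,t)=\mathbb{E}\big[e^{i(\alpha X(t)+\beta Y(t))}\mathds{1}_{\{D(t)=d_j\}}\big].\]
Applying $\frac{\partial}{\partial x}\mapsto i\alpha$, $\frac{\partial}{\partial y}\mapsto i\beta$ to the reduced equation (\ref{norefpde}) sends $\Delta$ to $-(\alpha^2+\beta^2)$ and the mixed fourth derivative $\frac{\partial^4}{\partial x^2\partial y^2}$ to $\alpha^2\beta^2$, while leaving $\frac{\partial}{\partial t}+\lambda$ unchanged; thus $\phi$ obeys a fourth-order linear ODE in $t$ with coefficients that do not depend on $t$. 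The four initial conditions needed to single out the solution come from the first-order system (\ref{interiordiffeqs}) with $q=1-p$ (so that the reflection terms vanish): Fourier transforming it yields a $4\times4$ linear ODE system for $(\phi_0,\phi_1,\phi_2,\phi_3)$ with $\phi_j(\alpha,\beta,0)=\tfrac14$, and the sum $\phi=\sum_j\phi_j$ satisfies exactly the fourth-order ODE just described.

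To integrate that ODE I would pull out the factor $e^{-\lambda t}$, writing $\phi(\alpha,\beta,t)=e^{-\lambda t}v(\alpha,\beta,t)$; this replaces each $\frac{\partial}{\partial t}+\lambda$ by $\frac{d}{dt}$ and leaves
\[v''''+\big(c^2(\alpha^2+\beta^2)-4\lambda^2p(1-p)\big)\,v''+\big(c^4\alpha^2\beta^2-\lambda^4(2p-1)^2\big)\,v=0,\]
whose characteristic polynomial is biquadratic in $\mu$. Solving it for $\mu^2$, one checks directly from the definitions (\ref{A})--(\ref{B}) that $A^2+B^2=\tfrac12\big(4\lambda^2p(1-p)-c^2(\alpha^2+\beta^2)\big)$ and $16A^2B^2=\big(4\lambda^2p(1-p)-c^2(\alpha^2+\beta^2)\big)^2-4\big(c^4\alpha^2\beta^2-\lambda^4(2p-1)^2\big)$, so that the two values of $\mu^2$ are precisely $(A+B)^2$ and $(A-B)^2$ and the four roots of the quartic are $\pm\big(A(\alpha,\beta)+B(\alpha,\beta)\big)$ and $\pm\big(A(\alpha,\beta)-B(\alpha,\beta)\big)$. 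Writing the general solution of the ODE as a linear combination of $\cosh$ and $\sinh$ of $(A+B)t$ and $(A-B)t$ and restoring the $e^{-\lambda t}$ prefactor gives the structure of (\ref{interiordistrchf}).

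The coefficients $k_0,k_1,k_2,k_3$ are then fixed by the four initial conditions read off from the $4\times4$ system: $\phi(\alpha,\beta,0)=1$; $\frac{\partial}{\partial t}\phi\big|_{t=0}=0$ (a short computation from the equal initial weights $\tfrac14$, i.e. the vanishing of the mean initial velocity); and explicit closed forms for $\frac{\partial^2}{\partial t^2}\phi\big|_{t=0}$ and $\frac{\partial^3}{\partial t^3}\phi\big|_{t=0}$, obtained by differentiating the system and evaluating at $t=0$. Imposing these four conditions on the $\cosh/\sinh$ representation yields a linear $4\times4$ system whose solution is the stated $(k_0,k_1,k_2,k_3)$.

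I expect the only real difficulty to be the bookkeeping: extracting $\frac{\partial^2}{\partial t^2}\phi\big|_{t=0}$ and $\frac{\partial^3}{\partial t^3}\phi\big|_{t=0}$ requires iterating the $4\times4$ system a couple more times, and inverting the final linear system for the $k_j$ is what generates the rather heavy rational expressions in $A$, $B$ and $\alpha^2+\beta^2$. A minor but genuine subtlety is that neither $4\lambda^2p(1-p)-c^2(\alpha^2+\beta^2)$ nor $c^4\alpha^2\beta^2-\lambda^4(2p-1)^2$ need be nonnegative, so $A$ and $B$ may become imaginary for small $\alpha,\beta$ and the hyperbolic functions turn into trigonometric ones; since all the identities above hold between entire functions of $(\alpha,\beta)$, the resulting formula is unambiguous and no case distinction is required.
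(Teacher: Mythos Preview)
Your proposal is correct and follows essentially the same route as the paper: Fourier-transform (\ref{norefpde}) into a fourth-order ODE, pull out $e^{-\lambda t}$ to obtain a biquadratic characteristic equation with roots $\pm(A+B),\pm(A-B)$, and fix the four coefficients from the initial data extracted by iterating the Fourier-transformed system (\ref{interiordiffeqs}). The only cosmetic difference is that the paper writes the general solution in the exponential basis $e^{\pm(A\pm B)t}$ with coefficients $\varphi_j$ and then regroups into the $\cosh/\sinh$ form, whereas you go to $\cosh/\sinh$ directly; your explicit verification that $(A\pm B)^2$ are the two values of $\mu^2$ and your remark on analytic continuation when $A,B$ become imaginary are welcome additions.
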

\begin{proof}We adopt the notation \begin{equation}\widehat{u}(\alpha,\beta,t)=\mathbb{E}\Big[e^{i\left(\alpha X(t)+\beta Y(t)\right)}\Big]\label{interiorchfnotation}.\end{equation}
In view of equation (\ref{norefpde}), the characteristic function $\widehat{u}(\alpha,\beta,t)$ satisfies the ordinary differential equation
\begin{equation}\label{norefode}\left(\left(\frac{d}{d t}+\lambda\right)^4+\left(c^2(\alpha^2+\beta^2)-4\lambda^2p(1-p)\right)\left(\frac{d}{d t}+\lambda\right)^2+c^2\alpha^2\beta^2\right)\widehat{u}=\lambda^4(2p-1)^2\widehat{u}.\end{equation}
\noindent We now establish the initial conditions for equation (\ref{norefode}). For this purpose we define the functions $$\widehat{u}_j(\alpha,\beta,t)=\mathbb{E}\Big[e^{i\left(\alpha X(t)+\beta Y(t)\right)}\;\mathds{1}_{\{D(t)=d_j\}}\Big],\qquad j=0,1,2,3$$
\noindent and, in view of the system of equations (\ref{interiordiffeqs}), we observe that for all $n\in\mathbb{N}$
\begin{equation}\label{eigen}
\frac{d^n}{d t^n}\begin{pmatrix}
 \widehat{u}_0\\
 \widehat{u}_1\\
 \widehat{u}_2\\
 \widehat{u}_3
\end{pmatrix}
=\text{L}^n\cdot\begin{pmatrix}
 \widehat{u}_0\\
 \widehat{u}_1\\
 \widehat{u}_2\\
 \widehat{u}_3
\end{pmatrix}
\end{equation}
\noindent where $$\text L=
\begin{pmatrix}
i\alpha c-\lambda&\lambda(1-p)&0&\lambda p\\
\lambda p&i\beta c-\lambda& \lambda (1-p)& 0 \\
0 & \lambda p&-i\alpha c-\lambda &\lambda (1-p)&  \\
\lambda (1-p)& 0&\lambda p &-i\beta c-\lambda
\end{pmatrix}.$$
\noindent Equation (\ref{eigen}) can be proved for $n=1$ by taking the Fourier transforms of the equations in the system (\ref{interiordiffeqs}) and can be extended to general values of $n$ by induction. Moreover, formula (\ref{eigen}) implies that
\begin{equation}\label{eigen0}
\frac{d^n}{d t^n}\left.\begin{pmatrix}
 \widehat{u}_0\\
 \widehat{u}_1\\
 \widehat{u}_2\\
 \widehat{u}_3
\end{pmatrix}\right\rvert_{t=0}
=\frac{1}{4}\;\text{L}^n\cdot\begin{pmatrix}
1\\
1\\
1\\
1
\end{pmatrix}
\end{equation}
\noindent where we have used the fact that, for $j=0,1,2,3$, the condition $u_j(x,y,0)=\frac{1}{4}\delta(x)\delta(y)$ implies that $\widehat{u}_j(\alpha,\beta,0)=\frac{1}{4}$. By using formula (\ref{eigen0}) and by taking into account that $\widehat{u}(\alpha,\beta,t)=\sum_{j=0}^3\widehat{u}_j(\alpha,\beta,t)$, we obtain the following initial conditions for equation (\ref{norefode}):
\begin{equation}\begin{alignedat}{2}
\widehat{u}&(\alpha,\beta,0)=1,\qquad\qquad&&\;\frac{d}{dt}\widehat{u}(\alpha,\beta,t)\Big\rvert_{t=0}=0,\\
\frac{d^2}{dt^2}\widehat{u}&(\alpha,\beta,t)\Big\rvert_{t=0}=-\frac{c^2(\alpha^2+\beta^2)}{2},\qquad\qquad&& \frac{d^3}{dt^3}\widehat{u}(\alpha,\beta,t)\Big\rvert_{t=0}=\frac{\lambda c^2(\alpha^2+\beta^2)}{2}.\end{alignedat}\label{interiorincond}\end{equation}

\noindent We now define the function \begin{equation}\label{interiortilde}\widetilde{u}(\alpha,\beta,t)=e^{\lambda t}\;\widehat{u}(\alpha,\beta,t)\end{equation}
\noindent and we observe that, in view of formulas (\ref{norefode}) and (\ref{interiorincond}), the following ordinary differential equation is satisfied by $\widetilde{u}(\alpha,\beta,t)$:
\begin{equation}\label{eulerode}
\begin{dcases}
\dfrac{d^4\widetilde{u}}{d t^4}+\left(c^2(\alpha^2+\beta^2)-4\lambda^2p(1-p)\right)\dfrac{d^2\widetilde{u}}{d t^2}+\left(c^2\alpha^2\beta^2-\lambda^4(2p-1)^2\right)\widetilde{u}=0\\
\widetilde{u}(\alpha,\beta,0)=1\\
\dfrac{d}{dt}\widetilde{u}(\alpha,\beta,t)\Big\rvert_{t=0}=\lambda\\
\dfrac{d^2}{dt^2}\widetilde{u}(\alpha,\beta,t)\Big\rvert_{t=0}=\lambda^2-\frac{c^2(\alpha^2+\beta^2)}{2}\\
\dfrac{d^3}{dt^3}\widetilde{u}(\alpha,\beta,t)\Big\rvert_{t=0}=\lambda^3-\lambda c^2(\alpha^2+\beta^2).
\end{dcases}
\end{equation}
\noindent Equation (\ref{eulerode}) is an Euler-type ordinary differential equation which can be solved by finding the roots of the associated algebraic equation
$$r^4+\left(c^2(\alpha^2+\beta^2)-4\lambda^2p(1-p)\right)r^2+\left(c^2\alpha^2\beta^2-\lambda^4(2p-1)^2\right)=0.$$
\noindent Since the above algebraic equation is biquadratic, its roots can be easily found and it can be shown that the general solution to equation (\ref{eulerode}) can be expressed in the form
\begin{align}\widetilde{u}(\alpha,\beta,t)=\varphi_0(&\alpha,\beta)\,e^{A(\alpha,\beta)t+B(\alpha,\beta)t}+\varphi_1(\alpha,\beta)\,e^{A(\alpha,\beta)t-B(\alpha,\beta)t}\nonumber\\
\;\qquad+&\varphi_2(\alpha,\beta)\,e^{-A(\alpha,\beta)t-B(\alpha,\beta)t}+\varphi_3(\alpha,\beta)\,e^{-A(\alpha,\beta)t+B(\alpha,\beta)t}\label{generalsolalg}\end{align}
\noindent where $A(\alpha,\beta)$ and $B(\alpha,\beta)$ are defined in formulas (\ref{A}) and (\ref{B}). By taking the derivatives of the general solution (\ref{generalsolalg}) and using the initial conditions of equation (\ref{eulerode}), it can be proved that the coefficients $\varphi_j(\alpha,\beta),\;j=0,1,2,3$, satisfy the linear system of equations

\begin{equation}
\begin{pmatrix}
1&1&1&1\\
A+B&A-B& -A-B&-A+B\\
(A+B)^2&(A-B)^2& (A+B)^2&(A-B)^2\\
(A+B)^3&(A-B)^3& -(A+B)^3&-(A-B)^3\\
\end{pmatrix}
\begin{pmatrix}
\varphi_0\\\varphi_1\\\varphi_2\\\varphi_3
\end{pmatrix}=\begin{pmatrix}
1\\\lambda\\\lambda^2-\frac{c^2(\alpha^2+\beta^2)}{2}\\[0.1cm]\lambda^3-\lambda c^2(\alpha^2+\beta^2)
\end{pmatrix}
\label{ABtoinvert}\end{equation}
\noindent where we omitted, for simplicity, the dependence of $A, B, \varphi_0, \varphi_1, \varphi_2$ and $\varphi_3$ on $\alpha$ and $\beta$. The linear system (\ref{ABtoinvert}) can be solved by inverting the coefficient matrix. Thus, we have that
\begin{equation}
\begin{pmatrix}
\varphi_0\\\varphi_1\\\varphi_2\\\varphi_3
\end{pmatrix}=
\begin{pmatrix}
-\frac{(A-B)^2}{8AB}&-\frac{(A-B)^2}{8AB(A+B)}&\frac{1}{8AB}&\frac{1}{8AB(A+B)}\\[0.1cm]
\frac{(A+B)^2}{8AB}&\frac{(A+B)^2}{8AB(A-B)}&-\frac{1}{8AB}& -\frac{1}{8AB(A-B)}\\[0.1cm]
-\frac{(A-B)^2}{8AB}&\frac{(A-B)^2}{8AB(A+B)}&\frac{1}{8AB}&-\frac{1}{8AB(A+B)}\\[0.1cm]
\frac{(A+B)^2}{8AB}&-\frac{(A+B)^2}{8AB(A-B)}&-\frac{1}{8AB}& \frac{1}{8AB(A-B)}
\end{pmatrix}
\begin{pmatrix}
1\\\lambda\\\lambda^2-\frac{c^2(\alpha^2+\beta^2)}{2}\\[0.1cm]\lambda^3-\lambda c^2(\alpha^2+\beta^2)
\end{pmatrix}
\end{equation}
\noindent from which the coefficients $\varphi_j(\alpha,\beta),\;j=0,1,2,3$, are immediately obtained. In particular, it can be shown that
\begin{align}\varphi_0(\alpha,\beta)=\frac{k_0(\alpha,\beta)+k_1(\alpha,\beta)}{2},\qquad\varphi_1(\alpha,\beta)=-\frac{k_2(\alpha,\beta)+k_3(\alpha,\beta)}{2},\nonumber\\
\varphi_2(\alpha,\beta)=\frac{k_0(\alpha,\beta)-k_1(\alpha,\beta)}{2},\qquad\varphi_3(\alpha,\beta)=-\frac{k_2(\alpha,\beta)-k_3(\alpha,\beta)}{2},\nonumber
\end{align}
where the coefficients $k_j(\alpha,\beta),\;j=0,1,2,3$, are defined in the statement of the theorem. After substituting the expressions of $\varphi_j(\alpha,\beta),\;j=0,1,2,3$, into formula (\ref{generalsolalg}), the proof is completed by inverting the transformation (\ref{interiortilde}) and properly collecting the terms of the final expression.
\end{proof}

\noindent We observe that, although the characteristic function (\ref{interiordistrchf}) has a cumbersome representation, it is consistent with the existing literature since it coincides with the results obtained by Orsingher \cite{orsingher2000} for $p=q=\frac{1}{2}$. Moreover, we emphasize that the key step for obtaining the characteristic function was to solve the fourth-order algebraic equation (\ref{eulerode}). Such equation is easy to solve because it is biquadratic in the case $p+q=1$. In principle, the characteristic function of $\big(X(t),Y(t)\big)$ could also be obtained in the case in which reflection is admitted by solving a more general fourth-order equation. However, since the equation is not biquadratc for $p+q<1$, the resulting characteristic function would have an extremely complicated form.

\section{Time spent in vertical direction}
\noindent We conclude our analysis by studying the time spent by the process $\big(X(t),Y(t)\big)$ moving vertically, parallel to the $y$-axis. Formally, we define the process
\begin{equation*}\label{Tdef}T(t)=\int_0^t\mathds{1}_{\{D(\tau)\in\{d_1,d_3\}\}}\mathop{d\tau},\qquad t>0.\end{equation*}
For fixed $t>0$, the support of the random variable $T(t)$ coincides with the interval $[0,t]$. Moreover, it is clear that the distribution of $T(t)$ has a degenerate component on the extrema of its support. In particular, we have that $T(t)=0$ if the process $\big(X(t),Y(t)\big)$ starts moving in horizontal direction at time 0 and it never changes direction or it changes direction by only performing reflections until time $t$. Similarly, it holds that $T(t)=t$ if the process $\big(X(t),Y(t)\big)$ starts moving vertically at time 0 and it never changes direction or it only performs reflections until time $t$. Therefore, we have that
\begin{equation}\label{verticalboundary}\mathbb{P}\left(T(t)=0\right)=\mathbb{P}\left(T(t)=t\right)=\frac{1}{2}\;e^{-\lambda t}\sum_{k=0}^{\infty}\frac{(\lambda t)^k(1-p-q)^k}{k!}=\frac{1}{2}\;e^{-\lambda t(p+q)}.\end{equation}
\noindent In the interior of its support, the random variable $T(t)$ has a continuous distribution. We denote the corresponding probability density function by
\begin{equation}\label{hdef}h(s,t)=\mathbb{P}\Big(T(t)\in\mathop{ds}\Big)/ds,\qquad s\in(0,t).\end{equation}
\noindent In order to determine the density (\ref{hdef}), we start by defining the functions $h_j(s,t),\;j=0,1,$
\begin{equation*}h_j(s,t)=\mathbb{P}\Big(T(t)\in\mathop{ds},\;D(t)\in\{d_j,\;d_{j+2}\}\Big)/ds,\qquad s\in(0,t).\end{equation*}
\noindent It is clear that
\begin{equation*}
\begin{dcases}
h_0(s,t+\mathop{dt})=h_0(s,t)(1-\lambda(p+q)\mathop{dt})+h_1(s-\mathop{ds},t)\lambda(p+q)\mathop{dt}+o(dt)\\
h_1(s,t+\mathop{dt})=h_1(s-\mathop{ds},t)(1-\lambda(p+q)\mathop{dt})+h_0(s,t)\lambda(p+q)\mathop{dt}+o(dt)
\end{dcases}
\end{equation*}
\noindent which implies that the system of differential equations 
\begin{equation}\label{verticaldiffeqs}
\begin{dcases}
\dfrac{\partial h_0}{\partial t}=\lambda(p+q)\left(h_1-h_0\right)\\
\dfrac{\partial h_1}{\partial t}=-\dfrac{\partial h_1}{\partial s}+\lambda(p+q)\left(h_0-h_1\right)\\
\end{dcases}
\end{equation}
\noindent is satisfied with initial conditions $h_0(s,0)=h_1(s,0)=\frac{1}{2}\;\delta(s)$. In view of the system (\ref{verticaldiffeqs}) and taking into account that $$h(s,t)=h_0(s,t)+h_1(s,t)$$ the density function $h(s,t)$ satisfies the partial differential equation
\begin{equation}\label{hpde}\left(\frac{\partial^2}{\partial t^2}+\frac{\partial^2}{\partial s\;\partial t}+2\lambda(p+q)\frac{\partial}{\partial t}+\lambda(p+q)\frac{\partial}{\partial s}\right)h=0.\end{equation}
\noindent Thus, we are able to prove the following result.
\begin{thm}The probability density function (\ref{hdef}) reads
\begin{equation}\label{verticaldistrthm}h(s,t)=e^{-\lambda(p+q) t}\left[\lambda(p+q)\; I_0\left(2\lambda(p+q)\sqrt{s(t-s)}\right)+\frac{\partial}{\partial t}I_0\left(2\lambda(p+q)\sqrt{s(t-s)}\right)\right],\qquad s\in(0,t).\end{equation}\end{thm}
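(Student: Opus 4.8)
The plan is to follow the pattern of the proofs of Theorems \ref{thm:f} and \ref{thm:diagdistr}. Write $\mu=\lambda(p+q)$ for brevity. First I would strip the first-order terms from the partial differential equation (\ref{hpde}) by setting $\widetilde h(s,t)=e^{\mu t}h(s,t)$: a direct substitution shows that the exponential factor absorbs $2\mu\,\partial_t h+\mu\,\partial_s h$ and that $\widetilde h$ satisfies the cleaner equation
\begin{equation*}\frac{\partial^2\widetilde h}{\partial t^2}+\frac{\partial^2\widetilde h}{\partial s\,\partial t}=\mu^2\,\widetilde h.\end{equation*}
(Equivalently, in the characteristic coordinates $\xi=s$, $\eta=t-s$ the equation reads $\widetilde h_{\xi\eta}=\mu^2\widetilde h$, which makes the appearance of the combination $\xi\eta=s(t-s)$ transparent.)

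Next I would look for solutions of the form $\widetilde h=\Psi(z)$ with $z=\sqrt{s(t-s)}$. Computing the derivatives of $z$ and using $s(t-s)=z^2$ to simplify, one finds that the equation above reduces to the modified Bessel equation of order zero,
\begin{equation*}\frac{d^2\Psi}{dz^2}+\frac1z\frac{d\Psi}{dz}-4\mu^2\Psi=0,\end{equation*}
with general solution $\Psi(z)=A\,I_0(2\mu z)+B\,K_0(2\mu z)$. The $K_0$-term is singular along $s=0$ and $s=t$ and is discarded, exactly as in the earlier proofs, so that $h$ stays bounded and integrable there. Since the coefficients of (\ref{hpde}) do not depend on $t$, the time derivative of a solution is again a solution, so I would take as the candidate density
\begin{equation*}h(s,t)=e^{-\mu t}\left[A\,I_0\!\left(2\mu\sqrt{s(t-s)}\right)+B\,\frac{\partial}{\partial t}I_0\!\left(2\mu\sqrt{s(t-s)}\right)\right].\end{equation*}

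Finally I would determine $A$ and $B$ from a normalization identity. The substitution $\eta=s-\tfrac t2$, for which $s(t-s)=\tfrac{t^2}{4}-\eta^2$, brings $\int_0^t I_0(2\mu\sqrt{s(t-s)})\,ds$ into the form covered by (\ref{IntI}) and gives $\frac{\sinh(\mu t)}{\mu}$; differentiating this in $t$ and using $I_0(0)=1$ for the moving endpoint yields $\int_0^t\partial_t I_0(2\mu\sqrt{s(t-s)})\,ds=\cosh(\mu t)-1$, the analogue of (\ref{IntdI}). Integrating the candidate for $h$ over $s\in(0,t)$ and matching the outcome with
\begin{equation*}\int_0^t h(s,t)\,ds=1-\mathbb{P}\big(T(t)=0\big)-\mathbb{P}\big(T(t)=t\big)=1-e^{-\mu t},\end{equation*}
which follows from (\ref{verticalboundary}), then comparing the coefficients of the linearly independent functions $1$, $e^{-\mu t}$, $e^{-2\mu t}$, forces $A=\mu$ and $B=1$, i.e.\ (\ref{verticaldistrthm}). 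The calculations are routine; the main points to watch are the reduction of $\widetilde h_{tt}+\widetilde h_{st}=\mu^2\widetilde h$ to the Bessel equation under $z=\sqrt{s(t-s)}$, which works precisely because $s(t-s)=z^2$, and the Leibniz differentiation of the first integral identity, where the upper endpoint $t$ also varies. I do not anticipate a genuine obstacle, since the argument parallels Theorems \ref{thm:f} and \ref{thm:diagdistr}; as an alternative, $A$ and $B$ could be fixed from the $t\downarrow0$ behaviour of $h$ encoded in the system (\ref{verticaldiffeqs}).
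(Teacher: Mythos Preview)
Your proposal is correct and follows essentially the same route as the paper: the paper likewise sets $\widetilde h=e^{\lambda(p+q)t}h$, reduces the resulting equation $\widetilde h_{tt}+\widetilde h_{st}=\lambda^2(p+q)^2\widetilde h$ to the order-zero Bessel equation via $z=\sqrt{s(t-s)}$, discards $K_0$, and fixes $A$ and $B$ by matching $\int_0^t h(s,t)\,ds=1-e^{-\lambda(p+q)t}$ using the integral identity (\ref{s(t-s)besselintegral}). Your remark on the characteristic coordinates $\xi=s$, $\eta=t-s$ and your derivation of the $s(t-s)$-integral from (\ref{IntI}) via the shift $\eta=s-\tfrac t2$ are nice clarifications but do not depart from the paper's argument.
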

\begin{proof}In light of equation (\ref{hpde}), the function $$\widetilde{h}(s,t)=e^{\lambda(p+q)t}\;h(s,t)$$ satisfies the partial differential equation
\begin{equation}\label{hcheckpde}\left(\frac{\partial^2}{\partial t^2}+\frac{\partial^2}{\partial s\partial t}-\lambda^2(p+q)^2\right)\widetilde{h}=0.\end{equation} The change of variables $$z=\sqrt{s(t-s)}$$ transforms equation (\ref{hcheckpde}) into the Bessel equation
\begin{equation*}\label{besselh}\frac{d^2\widetilde{h}}{z^2}+\frac{1}{z}\frac{d\widetilde{h}}{dz}-4\lambda^2(p+q)^2\widetilde{h}=0\end{equation*}
whose general solution reads \begin{equation*}\label{besselhsol}\widetilde{h}(z)=A\;I_0\Big(2\lambda(p+q)z\Big)+B\;K_0\Big(2\lambda(p+q)z\Big).\end{equation*}
\noindent Therefore, usual arguments permit us to express the solution to equation (\ref{hpde}) in the form
\begin{equation}\label{verticaldistrthmgeneral}h(s,t)=e^{-\lambda(p+q) t}\left[A\; I_0\left(2\lambda(p+q)\sqrt{s(t-s)}\right)+B\;\frac{\partial}{\partial t}I_0\left(2\lambda(p+q)\sqrt{s(t-s)}\right)\right].\end{equation}
\noindent In order for formula (\ref{verticaldistrthmgeneral}) to be consistent with (\ref{verticalboundary}), we must have that $$\int_0^th(s,t)\mathop{ds}=1-e^{-\lambda(p+q)t}.$$ By using the relation \begin{equation}\label{s(t-s)besselintegral}\int_0^tI_0\Big(K\;\sqrt{s(t-s)}\Big)\mathop{ds}=\frac{1}{K}\left(e^{\frac{Kt}{2}}-e^{-\frac{Kt}{2}}\right),\qquad t>0\end{equation} we obtain $$A=\lambda(p+q),\qquad B=1$$ which completes the proof.
\end{proof}

\noindent The corresponding characteristic function is given in the following theorem.

\begin{thm}\label{trenitalia69}The characteristic function of $T(t)$ is \begin{align}\mathbb{E}\left[e^{i\alpha T(t)}\right]=&\frac{1}{2}\left(1+\frac{2\lambda(p+q)}{\sqrt{4\lambda^2(p+q)^2-\alpha^2}}\right)e^{i \frac{\alpha}{2}t-\lambda(p+q)t+\frac{1}{2}\sqrt{4\lambda^2(p+q)^2-\alpha^2}\;t}\nonumber\\
\;&+\frac{1}{2}\left(1-\frac{2\lambda(p+q)}{\sqrt{4\lambda^2(p+q)^2-\alpha^2}}\right)e^{i \frac{\alpha}{2}t-\lambda(p+q)t-\frac{1}{2}\sqrt{4\lambda^2(p+q)^2-\alpha^2}\;t}.\label{verticalchf}\end{align}\end{thm}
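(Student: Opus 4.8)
The plan is to derive the characteristic function directly from the probability density function (\ref{verticaldistrthm}) obtained in the previous theorem, rather than re-solving the partial differential equation (\ref{hpde}). Since the distribution of $T(t)$ has an absolutely continuous part on $(0,t)$ given by $h(s,t)$ together with two atoms at $s=0$ and $s=t$ of mass $\frac{1}{2}e^{-\lambda(p+q)t}$ each (see formula (\ref{verticalboundary})), I would write
\begin{equation*}\mathbb{E}\left[e^{i\alpha T(t)}\right]=\frac{1}{2}e^{-\lambda(p+q)t}\left(1+e^{i\alpha t}\right)+\int_0^t e^{i\alpha s}\,h(s,t)\mathop{ds}.\end{equation*}
Then I would substitute the two-term expression for $h(s,t)$ and evaluate the two resulting integrals.

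The key computational ingredient is a Fourier-type version of the integral formula (\ref{s(t-s)besselintegral}), namely an evaluation of $\int_0^t e^{i\alpha s}I_0\!\left(K\sqrt{s(t-s)}\right)\mathop{ds}$. By completing the "symmetry" of the Bessel argument around $s=t/2$ — substituting $s=\frac{t}{2}+v$ so that $s(t-s)=\frac{t^2}{4}-v^2$ — this integral becomes $e^{i\alpha t/2}\int_{-t/2}^{t/2}e^{i\alpha v}I_0\!\left(K\sqrt{\frac{t^2}{4}-v^2}\right)\mathop{dv}$, which is exactly of the form already used in (\ref{besselFourierT}) with $ct\to t/2$; it evaluates to $e^{i\alpha t/2}\,\frac{1}{\sqrt{K^2-\alpha^2}}\left(e^{\frac{t}{2}\sqrt{K^2-\alpha^2}}-e^{-\frac{t}{2}\sqrt{K^2-\alpha^2}}\right)$ with $K=2\lambda(p+q)$. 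Differentiating this identity with respect to $t$ (legitimate since the equation is homogeneous in $t$ in the relevant sense) gives the companion integral involving $\frac{\partial}{\partial t}I_0$, up to the boundary terms at $s=0,t$ which precisely cancel the contributions of the two atoms. Collecting the $e^{\frac{t}{2}\sqrt{4\lambda^2(p+q)^2-\alpha^2}}$ and $e^{-\frac{t}{2}\sqrt{4\lambda^2(p+q)^2-\alpha^2}}$ terms, multiplying through by the common factor $e^{i\alpha t/2-\lambda(p+q)t}$, and identifying the coefficients yields (\ref{verticalchf}).

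An alternative, essentially equivalent route is to transform the PDE (\ref{hpde}) into an ordinary differential equation for $\widehat{h}(\alpha,t)=\mathbb{E}[e^{i\alpha T(t)}]$: taking the Fourier transform in $s$ and keeping in mind that the atoms at the endpoints contribute boundary terms, one gets a second-order linear ODE in $t$ with constant coefficients whose characteristic roots are $i\frac{\alpha}{2}-\lambda(p+q)\pm\frac{1}{2}\sqrt{4\lambda^2(p+q)^2-\alpha^2}$; the initial conditions $\widehat{h}(\alpha,0)=1$ and $\frac{d}{dt}\widehat{h}(\alpha,t)|_{t=0}$ (obtained from the system (\ref{verticaldiffeqs}) as in the proofs of the earlier characteristic-function theorems) then fix the two coefficients. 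I expect the main obstacle to be purely bookkeeping: correctly handling the contribution of the two boundary atoms so that the boundary terms produced when differentiating the Bessel integral in $t$ (or when integrating by parts in the Fourier transform) are matched against $\frac{1}{2}e^{-\lambda(p+q)t}(1+e^{i\alpha t})$, and then algebraically reorganizing the four exponential terms into the clean two-term form displayed in (\ref{verticalchf}). A useful consistency check, which I would include, is that setting $\alpha=0$ recovers $\mathbb{E}[1]=1$ and that the location $i\frac{\alpha}{2}$ of the common drift term matches the hydrodynamic statement that $T(t)\to t/2$.
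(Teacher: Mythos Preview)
Your alternative route---Fourier-transforming the PDE (\ref{hpde}) into a second-order ODE in $t$ with initial data read off from the system (\ref{verticaldiffeqs})---is exactly what the paper does. The paper takes this as the \emph{main} proof and then mentions your primary route (direct integration of the density (\ref{verticaldistrthm}) via the Bessel--Fourier identity obtained from the substitution $s=\tfrac{t}{2}+v$) only as a post-proof consistency remark. So your proposal matches the paper, just with the two routes in the opposite order of emphasis.

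One point to tighten in your primary route: the claim that the boundary terms ``precisely cancel the contributions of the two atoms'' is not accurate as written. Differentiating $\int_0^t e^{i\alpha s}I_0\big(K\sqrt{s(t-s)}\big)\,ds$ with respect to $t$ produces a \emph{single} boundary contribution $e^{i\alpha t}$ at $s=t$ (the lower limit $s=0$ is fixed and contributes nothing), so after adding the two atoms you are left with $\tfrac{1}{2}e^{-\lambda(p+q)t}(1-e^{i\alpha t})$, together with extra $\tfrac{i\alpha}{2R}$ terms coming from $G'(t)$. These leftovers must be absorbed algebraically into the two exponentials, not ``cancelled''; the bookkeeping is more delicate than you suggest. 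The ODE route avoids this entirely, which is presumably why the paper leads with it.
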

\begin{proof}The function $$\widehat{h}(\alpha,t)=\mathbb{E}\left[e^{i\alpha T(t)}\right]$$ satisfies, in view of equation (\ref{hpde}), the ordinary differential equation
\begin{equation}\label{verticalchfode}
\begin{dcases}
\frac{d^2\widehat{h}}{dt^2}+\left[2\lambda(p+q)-i\alpha\right] \frac{d\widehat{h}}{dt}-i\lambda(p+q)\alpha\widehat{h}=0\\
\widehat{h}(\alpha,0)=1\\
\frac{d}{d t}\left.\widehat{h}(\alpha,t)\right\lvert_{t=0}=\frac{i\alpha}{2}.
\end{dcases}\end{equation}
The general solution to equation (\ref{verticalchfode}) can be expressed in the form $$\widehat{h}(\alpha,t)=k_0(\alpha)\;e^{i \frac{\alpha}{2}t-\lambda(p+q)t+\frac{1}{2}\sqrt{4\lambda^2(p+q)^2-\alpha^2}\;t}+k_1(\alpha)\;e^{i \frac{\alpha}{2}t-\lambda(p+q)t-\frac{1}{2}\sqrt{4\lambda^2(p+q)^2-\alpha^2}\;t}.$$ The coefficients $k_0$ and $k_1$ can be determined by using the initial conditions.
\end{proof}
\noindent Observe that the characteristic function (\ref{verticalchf}) can be computed directly from the density function (\ref{verticaldistrthm}). By writing that $$\mathbb{E}\left[e^{i\alpha T(t)}\right]=\int_0^te^{i\alpha s}\,h(s,t)\mathop{ds}$$ the integral formula $$\int_0^te^{i\alpha s}\,I_0\left(\beta\sqrt{s(t-s)}\right)\mathop{ds}=\frac{e^{\frac{i\alpha t}{2}}}{\beta^2-\alpha^2}\left[e^{\frac{t}{2}\sqrt{\beta^2-\alpha^2}}-e^{-\frac{t}{2}\sqrt{\beta^2-\alpha^2}}\right]$$ yields the result.
\smallskip

We conclcude our analysis by discussing some aspects of the joint distribution of the time spent moving vertically $T(t)$ and the component $Y(t)$ of the planar random motion studied so far. Clearly, the support of the bivariate process $\big(T(t),\;Y(t)\big)$ coincides with the triangle \begin{equation}R_{ct}=\left\{(s,y)\in\mathbb{R}^2:\;0\le s\le t,\;-ct\le y\le ct\right\}.\label{jointYTsupp}\end{equation} We consider the probability density function $\bar{u}(s,y,t)$ defined by the relationship\begin{equation}\label{jointYT}\bar{u}(s,y,t)\mathop{ds}\mathop{dy}=\mathbb{P}\Big(T(t)\in \mathop{ds},\;Y(t)\in\mathop{dy}\Big),\qquad (s,y)\in R_{ct}.\end{equation}
 At the initial time $t=0$, the process $\big(T(t),\;Y(t)\big)$ starts at the vertex $(0,0)$ of the triangle $R_{ct}$ and it moves along two directions which are parallel to the oblique sides of $R_{ct}$. In particular, the normalized vectors which describe the directions along which $\big(T(t),\;Y(t)\big)$ can move are given by $$\theta_+=\left(\frac{1}{\sqrt{1+c^2}},\;\frac{c}{\sqrt{1+c^2}}\right),\qquad \theta_-=\left(\frac{1}{\sqrt{1+c^2}},\;-\frac{c}{\sqrt{1+c^2}}\right).$$
Clearly, in both cases the particle with position $\big(T(t),\;Y(t)\big)$ moves rightwards since the process $T(t)$ is non-decreasing. We denote by $\Theta(t)$ the direction of $\big(T(t),\;Y(t)\big)$ at time $t>0$. In order to understand how the motion of $\big(X(t),\;Y(t)\big)$ on $S_{ct}$ and that of $\big(T(t),\;Y(t)\big)$ on $R_{ct}$ relate to each other, we first observe that $\Theta(t)=\theta_+$ if and only if $D(t)=d_1$ and $\Theta(t)=\theta_-$ if and only if $D(t)=d_3$. It is now clear that when $\big(X(t),\;Y(t)\big)$ moves horizontally, neither $T(t)$ increases nor $Y(t)$, so that the particle in $R_{ct}$ remains still. If the process $\big(T(t),\;Y(t)\big)$ is not moving at time $t$ we say that $\Theta(t)=0$. Therefore, we have that $\Theta(t)=0$ if and only if $D(t)\in\{d_0,d_2\}$.\\
\noindent We now study the rule which determines the changes of the direction $\Theta(t)$ for the particle moving in the triangle $R_{ct}$. For this purpose, we denote by $E_t$ the event of a change of direction occurring during the time interval $(t,t+\mathop{dt}]$. This enables us to write that
\begin{equation}\Theta(t+dt)\Big\lvert\left\{E_t,\;\Theta(t)=\theta_+\right\}\;=\begin{cases}0&\text{with prob.}\;p+q\\\theta_-&\text{with prob.}\;1-p-q.\end{cases}\label{theta+change}\end{equation} Formula (\ref{theta+change}) can be interpreted in the following manner. Assume that at time $t$ the particle $\big(T(t),\;Y(t)\big)$ is moving with direction $\Theta(t)=\theta_+$ and that the event $E_t$ occurs. Since $\Theta(t)=\theta_+$, we have that $D(t)=d_1$ and thus we can distinghuish between two cases. In the first case reflection occurs with probability $1-p-q$, which implies that $D(t+dt)=d_3$ and thus $\Theta(t)=\theta_-$. If reflection does not occur, which happens with probability $p+q$, we have that $D(t)\in\{d_0,d_2\}$ and therefore $\Theta(t+dt)=0$.
\noindent Similarly, we have that
\begin{equation*}\Theta(t+dt)\Big\lvert\left\{E_t,\;\Theta(t)=\theta_-\right\}\;=\begin{cases}0&\text{with prob.}\;p+q\\\theta_+&\text{with prob.}\;1-p-q.\end{cases}\end{equation*}
We now emphasize that difficulties emerge when conditioning the distribution of $\Theta(t+dt)$ on the event $\Theta(t)=0$. It can be easily verified that
\begin{equation}\label{T0+}\Theta(t+dt)\Big\lvert\left\{E_t,\;\Theta(t)=0,\;D(t)=d_0\right\}\;=\begin{cases}0&\text{with prob.}\;1-p-q\\\theta_+&\text{with prob.}\;p\\\theta_-&\text{with prob.}\;q.\end{cases}\end{equation} and analogously \begin{equation}\label{T0-}\Theta(t+dt)\Big\lvert\left\{E_t,\;\Theta(t)=0,\;D(t)=d_2\right\}\;=\begin{cases}0&\text{with prob.}\;1-p-q\\\theta_+&\text{with prob.}\;q\\\theta_-&\text{with prob.}\;p.\end{cases}\end{equation} The difference between formulas (\ref{T0+}) and (\ref{T0-}) shows that the distribution of $\Theta(t+dt)$, conditional on the fact that $\Theta(t)=0$ and that a change of direction occurs, cannot be determined exactly unless the additional information on $D(t)$ is given. The interesting fact about this phenomenon is that it only occurs if $p\neq q$. For $p=q$, the right-hand sides of formulas (\ref{T0+}) and (\ref{T0-}) are equal and we can write \begin{equation*}\Theta(t+dt)\Big\lvert\left\{E_t,\;\Theta(t)=0\right\}\;=\begin{cases}0&\text{with prob.}\;1-2p\\\theta_+&\text{with prob.}\;p\\\theta_-&\text{with prob.}\;p\end{cases}\end{equation*} which noticeably simplifies the joint distribution of the couple $\big(T(t),\;Y(t)\big)$. In order to understand how the study of the density function $\bar{u}(s,y,t)$ is simplified for $p=q$, we define, for $j=0,1,2,3$, the auxiliary density functions \begin{equation*}\bar{u}_j(s,y,t)\mathop{ds}\mathop{dy}=\mathbb{P}\left(T(t)\in \mathop{ds},\;Y(t)\in\mathop{dy},\;D(t)=d_j\right),\qquad (s,y)\in R_{ct}.\end{equation*} and we observe that the following linear system of partial differential equations is satisfied:
\begin{equation}\label{hbarinteriordiffeqs}
\begin{dcases}
\dfrac{\partial \bar{u}_0}{\partial t}=-\lambda \bar{u}_0+\lambda q \bar{u}_1+\lambda (1-p-q) \bar{u}_2+\lambda p \bar{u}_3\\
\dfrac{\partial \bar{u}_1}{\partial t}=-\dfrac{\partial \bar{u}_1}{\partial s}-c\dfrac{\partial \bar{u}_1}{\partial y}+\lambda p \bar{u}_0+\lambda q \bar{u}_2+\lambda (1-p-q) \bar{u}_3-\lambda \bar{u}_1\\
\dfrac{\partial \bar{u}_2}{\partial t}=-\lambda \bar{u}_2+\lambda (1-p-q) \bar{u}_0+\lambda p \bar{u}_1+\lambda q \bar{u}_3\\
\dfrac{\partial \bar{u}_3}{\partial t}=-\dfrac{\partial \bar{u}_3}{\partial s}+c\dfrac{\partial \bar{u}_3}{\partial y}+\lambda q \bar{u}_0+\lambda (1-p-q) \bar{u}_1+\lambda p \bar{u}_2-\lambda \bar{u}_3.
\end{dcases}
\end{equation}
\noindent The system (\ref{hbarinteriordiffeqs}) implies that $\bar{u}(s,y,t)$ satisfies the cumbersome fourth-order partial differential equation
\begin{align}&\Bigg\{\left[\left(\frac{\partial}{\partial t}+\lambda\right)^2-\lambda^2(1-p-q)^2\right]\left[\left(\frac{\partial}{\partial t}+\lambda+\frac{\partial}{\partial s}\right)^2-c^2\frac{\partial^2}{\partial y^2}\right]-4\lambda^2 p q \left(\frac{\partial}{\partial t}+\lambda\right)\left(\frac{\partial}{\partial t}+\lambda+\frac{\partial}{\partial s}\right)\nonumber\\
&\qquad-4\lambda^3(1-p-q)(p^2+q^2)\left(\frac{\partial}{\partial t}+\lambda+\frac{1}{2}\frac{\partial}{\partial s}\right)-\lambda^2(1-p-q)^2\left(\frac{\partial}{\partial t}+\lambda\right)^2-\lambda^4(p^2-q^2)^2\nonumber\\
&\qquad-4\lambda^4pq(1-p-q)^2+\lambda^4(1-p-q)^4\Bigg\}\bar{u}=0.\label{hbarpdeint}\end{align}
\noindent We now show that the system (\ref{hbarinteriordiffeqs}) and equation (\ref{hbarpdeint}) simplify in the case $p=q$. We define the function
\begin{equation*}\bar{u}_{02}(s,y,t)\mathop{ds}\mathop{dy}=\mathbb{P}\left(T(t)\in \mathop{ds},\;Y(t)\in\mathop{dy},\;D(t)\in\{d_0,\;d_2\}\right),\qquad s\in[0,t],\;y\in[-ct,ct].\end{equation*} Of course, we have that $$\bar{u}_{02}(s,y,t)=\bar{u}_{0}(s,y,t)+\bar{u}_{2}(s,y,t).$$ Therefore, the first and the third equations of the system (\ref{hbarinteriordiffeqs}) imply that, if $p=q$, the ordinary differential equation \begin{equation}\label{02ode}\dfrac{\partial \bar{u}_{02}}{\partial t}=-2\lambda p\,\bar{u}_{02}+2\lambda p (\bar{u}_1+\bar{u}_3)\end{equation} holds which permits to simplify the system (\ref{hbarinteriordiffeqs}) into
\begin{equation}\label{hbarinteriordiffeqs02}
\begin{dcases}
\dfrac{\partial \bar{u}_{02}}{\partial t}=-2\lambda p\, \bar{u}_{02}+2\lambda p (\bar{u}_1+\bar{u}_3)\\
\dfrac{\partial \bar{u}_1}{\partial t}=-\dfrac{\partial \bar{u}_1}{\partial s}-c\dfrac{\partial \bar{u}_1}{\partial y}+\lambda p\bar{u}_{02}+\lambda (1-2p) \bar{u}_3-\lambda \bar{u}_1\\
\dfrac{\partial \bar{u}_3}{\partial t}=-\dfrac{\partial \bar{u}_3}{\partial s}+c\dfrac{\partial \bar{u}_3}{\partial y}+\lambda p \bar{u}_{02}+\lambda (1-2p) \bar{u}_1-\lambda \bar{u}_3.
\end{dcases}
\end{equation}
\noindent Observe that equation (\ref{02ode}) is not satisfied if $p\neq q$. Since $$\bar{u}(s,y,t)=\bar{u}_{02}(s,y,t)+\bar{u}_{1}(s,y,t)+\bar{u}_{3}(s,y,t)$$ the linear system (\ref{hbarinteriordiffeqs02}) implies that, if $p=q$, equation (\ref{hbarpdeint}) reduces to the third-order partial differential equation
\begin{align}&\Bigg\{\left(\frac{\partial}{\partial t}+2\lambda p\right)\left[\left(\frac{\partial }{\partial t}+\lambda +\frac{\partial }{\partial s}\right)-c^2\frac{\partial^2}{\partial y^2}\right]-4\lambda^2 p^2\left(\frac{\partial }{\partial t}+\lambda +\frac{\partial }{\partial s}\right)\nonumber\\
&\qquad\qquad-\lambda^2(1-2p)^2\left(\frac{\partial }{\partial t}+2\lambda p\right)-4\lambda^3p^2(1-2p)\Bigg\}\bar{u}=0.\label{hbarpdeintorder3}\end{align}
\noindent Unfortunately, finding an explicit solution to equation (\ref{hbarpdeint}) seems a difficult task even in the special case (\ref{hbarpdeintorder3}). However, in the following theorem we are able to obtain the limiting behaviour of $\big(T(t),\;Y(t)\big)$ when both the velocity of the process and the rate of the direction changes are infinite.
\begin{thm}\label{thm:bivdeg}For $\lambda,\, c\to+\infty$, with $\frac{\lambda}{c^2}\to1$, the partial differential equation (\ref{hbarpdeint}) becomes \begin{equation}\frac{\partial u}{\partial t}=-\frac{1}{2}\frac{\partial u}{\partial s}+\frac{1}{4}\;\frac{(1-p)+(1-q)}{(1-p)^2+(1-q)^2}\frac{\partial^2 u}{\partial y^2.}\label{bivdegenerate}\end{equation}\end{thm}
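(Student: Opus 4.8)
\emph{Proof proposal.} The plan is to mimic the proof of Theorem \ref{hydrolimthmbm}. The operator acting on $\bar u$ in equation (\ref{hbarpdeint}) is a polynomial in the derivations $\frac{\partial}{\partial t},\frac{\partial}{\partial s},\frac{\partial}{\partial y}$ and in the parameters $\lambda$ and $c$; under the scaling $\lambda,c\to+\infty$ with $c^2/\lambda\to1$ I would sort its terms by their growth order in $\lambda$, bearing in mind that a factor $c^2$ counts as one extra power of $\lambda$. To make the $\lambda$-expansion transparent I would first rewrite each of the quadratic factors occurring in (\ref{hbarpdeint}), for instance
\[\left(\frac{\partial}{\partial t}+\lambda\right)^2-\lambda^2(1-p-q)^2=\lambda^2(p+q)(2-p-q)+2\lambda\frac{\partial}{\partial t}+\frac{\partial^2}{\partial t^2},\]
so that its $\lambda^2$, $\lambda^1$ and $\lambda^0$ pieces are visible, and likewise expand $\left(\frac{\partial}{\partial t}+\lambda+\frac{\partial}{\partial s}\right)^2-c^2\frac{\partial^2}{\partial y^2}$ and the remaining groups. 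Dividing the whole equation by $\lambda^3$, the strategy is: show that the $O(\lambda^4)$ part vanishes identically, extract the $O(\lambda^3)$ part (which converges because $c^2/\lambda\to1$), and check that everything else is $O(\lambda^2)$ or smaller and hence disappears in the limit.

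\emph{Step 1: the $\lambda^4$ part.} Collecting the derivative-free contribution of each of the seven groups in (\ref{hbarpdeint}) gives the coefficient of $\lambda^4\bar u$:
\begin{align*}(p+q)(2-p-q)&-4pq-4(1-p-q)(p^2+q^2)-(1-p-q)^2\\
&-(p^2-q^2)^2-4pq(1-p-q)^2+(1-p-q)^4.\end{align*}
Writing $\sigma=p+q$, $\mu=pq$ and using $p^2+q^2=\sigma^2-2\mu$ and $(p^2-q^2)^2=\sigma^2(\sigma^2-4\mu)$, all the $\mu$-dependent terms cancel and the remaining polynomial in $\sigma$ collapses to zero. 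This is exactly what is needed for (\ref{hbarpdeint}) to admit a non-degenerate limit.

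\emph{Step 2: the $\lambda^3$ part.} Only the first four groups of (\ref{hbarpdeint}) contribute at order $\lambda^3$; the last three are pure $\lambda^4$. After dividing by $\lambda^3$, the only surviving terms are those proportional to $\frac{\partial u}{\partial t}$ and $\frac{\partial u}{\partial s}$, together with $\frac{\partial^2 u}{\partial y^2}$, which survives because in the product $\lambda^2(p+q)(2-p-q)\big(-c^2\frac{\partial^2}{\partial y^2}\big)$ the factor $c^2/\lambda\to1$; apart from this term, every contribution carrying a second or higher order derivative comes with at most $\lambda^2$ and vanishes. Gathering the coefficients, the limiting equation is
\begin{align*}
&\Big[2+2(p+q)(2-p-q)-8pq-4(1-p-q)(p^2+q^2)-2(1-p-q)^2\Big]\frac{\partial u}{\partial t}\\
&\quad+\Big[2(p+q)(2-p-q)-4pq-2(1-p-q)(p^2+q^2)\Big]\frac{\partial u}{\partial s}-(p+q)(2-p-q)\frac{\partial^2 u}{\partial y^2}=0.
\end{align*}
Using the identity $2-2(p+q)+(p+q)^2-2pq=(1-p)^2+(1-q)^2$, the bracket multiplying $\frac{\partial u}{\partial t}$ simplifies to $4(p+q)\big[(1-p)^2+(1-q)^2\big]$ and the one multiplying $\frac{\partial u}{\partial s}$ to $2(p+q)\big[(1-p)^2+(1-q)^2\big]$. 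Dividing through by $4(p+q)\big[(1-p)^2+(1-q)^2\big]$ and using $2-p-q=(1-p)+(1-q)$ yields (\ref{bivdegenerate}).

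I expect the main obstacle to be the bookkeeping rather than any new idea: correctly separating the $O(\lambda^4)$ and $O(\lambda^3)$ contributions when a hidden factor $c^2$ silently raises the order by one, and then carrying out the two polynomial identities in $p$ and $q$ — the vanishing of the $\lambda^4$ coefficient and the factorisation of the surviving $\lambda^3$ coefficients through $(1-p)^2+(1-q)^2$. As a consistency check I would specialise to $p=q$, where (\ref{hbarpdeint}) already collapses to the third-order equation (\ref{hbarpdeintorder3}), and verify that the same limit (\ref{bivdegenerate}) is recovered from that simpler equation.
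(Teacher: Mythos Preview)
Your proposal is correct and follows precisely the approach indicated in the paper: divide (\ref{hbarpdeint}) by $\lambda^3$, identify the surviving terms, and take the limit. The paper's own proof is a one-line remark (``divide by $\lambda^3$, simplify, take the limit''), so you have simply carried out explicitly the bookkeeping it leaves to the reader; your verification that the $\lambda^4$ coefficient vanishes and your factorisation of the $\lambda^3$ coefficients through $(p+q)\big[(1-p)^2+(1-q)^2\big]$ are both accurate.
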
\begin{proof}The proof can be performed by straightfoward calculation after dividing equation (\ref{hbarpdeint}) by $\lambda^3$. Simplifying the expression and taking the limit yields the result. \end{proof}
\noindent In order to interpret theorem \ref{thm:bivdeg} observe that, by taking the Fourier transform of both sides of equation (\ref{bivdegenerate}), it can be shown that the characteristic function of $\big(T(t),\;Y(t)\big)$ becomes, in the hydrodynamic limit,
\begin{equation}\lim_{\lambda,c\to+\infty}\mathbb{E}\left[e^{i\left(\alpha T(t)+\beta Y(y)\right)}\right]=e^{i\frac{\alpha t}{2}-\frac{(1-p)+(1-q)}{(1-p)^2+(1-q)^2}\frac{\beta^2t}{4}}.\label{fiumicino}\end{equation} Taking the inverse Fourier transform of the expression (\ref{fiumicino}) yields
\begin{equation}\lim_{\lambda,c\to+\infty}\mathbb{P}\Big(T(t)\in \mathop{ds},\;Y(t)\in\mathop{dy}\Big)/\left(\mathop{ds}\mathop{dy}\right)=\sqrt{\frac{(1-p)^2+(1-q)^2}{(1-p)+(1-q)}}\cdot \frac{e^{-\frac{(1-p)^2+(1-q)^2}{(1-p)+(1-q)}\,\frac{y^2}{t}}}{\sqrt{\pi t}}\cdot \delta\left(s-\frac{t}{2}\right).\label{mort}\end{equation}
Equation (\ref{mort}) implies that, while $Y(t)$ converges to a Brownian motion, the process $T(t)$ becomes deterministic and its limiting value is equal to $\frac{t}{2}$. Thus, in the hydrodynamic limit, the process $\big(X(t),\;Y(t)\big)$ spends half of the time moving vertically.\\

We are now interested in studying the behaviour of $\big(T(t),\;Y(t)\big)$ on the boundary of $R_{ct}$. We start by considering the oblique side with equation $y=cs$ and we define the probability density function \begin{equation}\bar{f}(s,t)=\mathbb{P}\Big(T(t)\in\mathop{ds},\,Y(t)=c\,T(t)\Big)/\mathop{ds},\qquad s\in(0,t).\label{fbar}\end{equation}
By setting, for $j=0,1,2$, \begin{equation*}\bar{f}_j(s,t)=\mathbb{P}\Big(T(t)\in\mathop{ds},\,Y(t)=c\,T(t),\,D(t)=d_j\Big)/\mathop{ds},\qquad s\in(0,t)\end{equation*} we clearly have that
\begin{equation*}
\begin{dcases}
\bar{f}_0(s,t+dt)=\bar{f}_0(s,t)(1-\lambda\mathop{dt})+\bar{f}_1(s,t)\lambda q\mathop{dt}+\bar{f}_2(s,t)\lambda (1-p-q)\mathop{dt}+o(dt)\\
\bar{f}_1(s,t+dt)=\bar{f}_1(s-\mathop{ds},t)(1-\lambda\mathop{dt})+\bar{f}_0(s,t)\lambda p\mathop{dt}+\bar{f}_2(s,t)\lambda q\mathop{dt}+o(dt)\\
\bar{f}_2(s,t+dt)=\bar{f}_2(s,t)(1-\lambda\mathop{dt})+\bar{f}_0(s,t)\lambda (1-p-q)\mathop{dt}+\bar{f}_1(s,t)\lambda p\mathop{dt}+o(dt)
\end{dcases}
\end{equation*}
which implies that
\begin{equation}\label{fbarsys}
\begin{dcases}
\frac{\partial \bar{f}_0}{\partial t}=\lambda q\bar{f}_1+\lambda (1-p-q)\bar{f}_2-\lambda \bar{f}_0\\
\frac{\partial \bar{f}_1}{\partial t}=-\frac{\partial \bar{f}_1}{\partial s}+\lambda p\bar{f}_0+\lambda q\bar{f}_2-\lambda \bar{f}_1\\
\frac{\partial \bar{f}_2}{\partial t}=\lambda (1-p-q)\bar{f}_0+\lambda p\bar{f}_1-\lambda \bar{f}_2
\end{dcases}
\end{equation} with initial conditions $\bar{f}_j(s,0)=\frac{1}{4}\,\delta(s)$ for $j=0,1,2$. Thus, the probability density function (\ref{fbar}) satisfies the third-order partial differential equation

\begin{align}\bigg\{\left(\frac{\partial}{\partial t}+\lambda\right)^3+\left(\frac{\partial}{\partial t}+\lambda\right)^2&\frac{\partial}{\partial s} -\lambda^2\left[2 p q + (1-p-q)^2\right] \left(\frac{\partial}{\partial t}+\lambda\right)\nonumber\\& -\lambda^2(1-p-q)^2\frac{\partial}{\partial s} -\lambda^3(1-p-q)(p^2+q^2)\bigg\}\bar{f}=0.\label{fbarpde}\end{align}
\noindent While finding a general solution to equation (\ref{fbarpde}) is a difficult task, we observe that a noticeable simplification of the equation occurs if the process does not admit reversion, that is if $p+q=1$. In this case equation (\ref{fbarpde}) can be expressed in the form
\begin{equation}\left(\frac{\partial}{\partial t}+\lambda\right)\Bigg(\frac{\partial^2}{\partial t^2}+\left(\frac{\partial}{\partial s}+2\lambda\right)\frac{\partial}{\partial t}+\lambda^2\left[1-2p(1-p)\right]+\lambda\frac{\partial}{\partial s}\Bigg)\bar{f}=0.\label{fbarpdenoref}\end{equation}
Thus, if reflection is not admitted, we are able to study the exact distribution of $\big(T(t),Y(t)\big)$ on the oblique sides of $R_{ct}$. We start by calculating the probability of the particle lying on the upper oblique side.
\begin{thm}\label{thmobliqueprob}If the process $\big(X(t),Y(t)\big)$ does not admit reflection, that is if $p+q=1$, it holds that
\begin{align}\mathbb{P}\big(Y(t)=c\,T(t)\big)=\frac{e^{-\lambda t}}{8}\Bigg\{&\left(1+\frac{1}{\sqrt{2p(1-p)}}\right)^2 e^{\lambda t\sqrt{2p(1-p)}}\nonumber\\
&+\left(1-\frac{1}{\sqrt{2p(1-p)}}\right)^2 e^{-\lambda t\sqrt{2p(1-p)}}-\frac{(2p-1)^2}{p(1-p)}\Bigg\}.\label{obliqueprob}\end{align}
\end{thm}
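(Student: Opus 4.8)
The plan is to sidestep the third–order equation (\ref{fbarpde}) and reduce the computation to a question about the direction process alone. Since $Y(t)=c\int_0^t\big(\mathds{1}_{\{D(\tau)=d_1\}}-\mathds{1}_{\{D(\tau)=d_3\}}\big)\mathop{d\tau}$ and $T(t)=\int_0^t\big(\mathds{1}_{\{D(\tau)=d_1\}}+\mathds{1}_{\{D(\tau)=d_3\}}\big)\mathop{d\tau}$, one has $cT(t)-Y(t)=2c\int_0^t\mathds{1}_{\{D(\tau)=d_3\}}\mathop{d\tau}\ge 0$, a quantity which is non-decreasing in $t$ and vanishes at $t=0$. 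Hence $\{Y(t)=cT(t)\}$ is exactly the event that the direction $d_3$ is never assumed on $[0,t]$ (equivalently $\Theta(\tau)\neq\theta_-$ for every $\tau\le t$; for $p+q=1$ no reflection is possible). I would therefore introduce, for $j=0,1,2$,
\[
P_j(t)=\mathbb{P}\big(D(\tau)\neq d_3\ \text{for all }\tau\in[0,t],\ D(t)=d_j\big),
\]
so that $\mathbb{P}\big(Y(t)=cT(t)\big)=P_0(t)+P_1(t)+P_2(t)=:P(t)$.

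The next step is to write down the evolution of $(P_0,P_1,P_2)$. By the same infinitesimal bookkeeping that produced the system (\ref{fbarsys}) — now viewing a transition into $d_3$ (which occurs at rate $\lambda q$ from $d_0$, at rate $\lambda p$ from $d_2$, and is impossible from $d_1$) as an exit from the set of admissible states, and using $p+q=1$ to rule out reflection — the triple satisfies the constant-coefficient linear system
\[
\dot P_0=-\lambda P_0+\lambda q\,P_1,\qquad
\dot P_1=\lambda p\,P_0-\lambda P_1+\lambda q\,P_2,\qquad
\dot P_2=\lambda p\,P_1-\lambda P_2,
\]
with $P_0(0)=P_1(0)=P_2(0)=\tfrac14$, each of the four starting directions having probability $\tfrac14$ and only $d_3$ being excluded; this system can also be recovered as the $s$-marginal of (\ref{fbarsys}) when $p+q=1$, but it is cleanest to derive it directly. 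The coefficient matrix is $\lambda(N-I)$, where $N$ is the $3\times3$ tridiagonal matrix with sub-diagonal entries $p$ and super-diagonal entries $q$; its characteristic polynomial is $-\mu(\mu^2-2pq)$, so the eigenvalues of $\lambda(N-I)$ are $-\lambda$ and $-\lambda\pm\lambda\sqrt{2pq}$, i.e., with $q=1-p$, $-\lambda$ and $-\lambda\pm\lambda\sqrt{2p(1-p)}$.

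It follows that $P(t)=C_0\,e^{-\lambda t}+C_+\,e^{-\lambda t+\lambda t\sqrt{2p(1-p)}}+C_-\,e^{-\lambda t-\lambda t\sqrt{2p(1-p)}}$, and the three constants are fixed by the initial data $P(0)=\tfrac34$, $P'(0)=\dot P_0(0)+\dot P_1(0)+\dot P_2(0)=-\tfrac{\lambda}{4}$ and $P''(0)=\tfrac{\lambda^2p(1-p)}{2}$, the latter two being read off from the system by summing the entries of $\lambda(N-I)$ and of $\lambda^2(N-I)^2$ against $(\tfrac14,\tfrac14,\tfrac14)$. Solving this $3\times3$ linear system gives $C_\pm=\tfrac18\big(1\pm\tfrac{1}{\sqrt{2p(1-p)}}\big)^2$ and $C_0=-\tfrac{(2p-1)^2}{8p(1-p)}$, which is precisely (\ref{obliqueprob}); as a sanity check the three constants sum to $P(0)=\tfrac34$. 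The one step calling for genuine thought is the reduction in the first paragraph: once $\{Y(t)=cT(t)\}$ is recognized as avoidance of the single state $d_3$, the whole joint machinery with $T(t)$ and the third-order equation becomes unnecessary, the problem collapses to a three-state Markov chain with absorption, and the rest is routine linear algebra. In particular the $\sqrt{2pq}$ in the exponents is just the spectral radius of $N$, produced by the two admissible two-step loops $d_1\to d_0\to d_1$ and $d_1\to d_2\to d_1$ through the non-absorbing state.
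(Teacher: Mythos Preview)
Your proof is correct. The reduction in your first paragraph---that $cT(t)-Y(t)=2c\int_0^t\mathds{1}_{\{D(\tau)=d_3\}}\,d\tau$, so that $\{Y(t)=cT(t)\}$ is precisely the event that the direction chain never visits $d_3$---is the same characterisation that underlies the paper's argument, though the paper leaves it implicit in the phrase ``the process alternates the upward direction $d_1$ with horizontal directions''.

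After that common starting point the two proofs diverge. The paper conditions on the initial direction $D(0)\in\{d_0,d_1,d_2\}$ and on the parity of $N(t)$, and for each case counts combinatorially the admissible direction sequences that avoid $d_3$; the key identities are $\mathbb{P}\big(Y(t)=cT(t)\mid D(0)=d_1,N(t)=2k\big)=(2p(1-p))^k$ and its odd counterpart, which are then summed into hyperbolic functions and averaged over the initial direction. You instead set up the sub-Markovian ODE system for the three surviving states, diagonalise the $3\times3$ matrix $\lambda(N-I)$, and fix the coefficients from the initial data $P(0),P'(0),P''(0)$. Your route is shorter, requires no case analysis on $D(0)$ or on the parity of $N(t)$, and would extend with only cosmetic changes to the case $p+q<1$ (where the paper's combinatorial count, as the authors themselves note after Theorem~\ref{thm:fbar}, becomes non-trivial). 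The paper's approach, on the other hand, makes the probabilistic origin of the quantity $2p(1-p)$ fully explicit as the weight of the two-step excursion $d_1\to\{d_0,d_2\}\to d_1$---a point you also recover at the end as the spectral radius of $N$.
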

\begin{proof}We start by studying the probability of $\big(T(t),Y(t)\big)$ lying on the oblique side of $R_{ct}$ conditional on the initial direction of $\big(X(t),Y(t)\big)$ being $d_1$. Clearly, we can write that 
\begin{align}\mathbb{P}\big(Y(t)&=c\,T(t)\big\lvert D(0)=d_1\big)=\sum_{k=0}^{\infty}\mathbb{P}\big(Y(t)=c\,T(t),\; N(t)=k\big\lvert D(0)=d_1\big)\nonumber\\
=&\sum_{k=0}^{\infty}\mathbb{P}\big(Y(t)=c\,T(t),\; N(t)=2k\big\lvert D(0)=d_1\big)+\sum_{k=0}^{\infty}\mathbb{P}\big(Y(t)=c\,T(t),\; N(t)=2k+1\big\lvert D(0)=d_1\big)\nonumber\\
=&\sum_{k=0}^{\infty}\mathbb{P}\big(Y(t)=c\,T(t)\big\lvert D(0)=d_1,\; N(t)=2k\big)\,\mathbb{P}\left(N(t)=2k\right)\nonumber\\\;&\;\;+\sum_{k=0}^{\infty}\mathbb{P}\big(Y(t)=c\,T(t)\big\lvert D(0)=d_1,\; N(t)=2k+1\big)\,\mathbb{P}\left(N(t)=2k+1\right)\nonumber\\
=&e^{-\lambda t}\sum_{k=0}^{\infty}\mathbb{P}\big(Y(t)=c\,T(t)\big\lvert D(0)=d_1,\; N(t)=2k\big)\,\frac{(\lambda t)^{2k}}{(2k)!}\nonumber\\\;&\;\;+e^{-\lambda t}\sum_{k=0}^{\infty}\mathbb{P}\big(Y(t)=c\,T(t)\big\lvert D(0)=d_1,\; N(t)=2k+1\big)\,\frac{(\lambda t)^{2k+1}}{(2k+1)!}.\label{longproofcalc}\end{align}
\noindent We now observe that \begin{equation}\label{longproofcalc1}\mathbb{P}\big(Y(t)=c\,T(t)\big\lvert D(0)=d_1,\; N(t)=2k\big)=\left(2p(1-p)\right)^k.\end{equation}
In order to clarify the formula above, observe that the event $\{Y(t)=c\,T(t)\big\lvert D(0)=d_1,\; N(t)=2k\}$ occurs if the process alternates the upward direction $d_1$ with horizontal directions, which can be both $d_0$ and $d_2$. In order for $N(t)$ to be equal to $2k$, a change of direction from $d_1$ to horizontal and then back to $d_1$ must occur $k$ times, and it always occurs with probability $p(1-p)$ no matter whether the horizontal direction is $d_0$ or $d_2$. Therefore, a path having initial direction $d_1$ and involving $2k$ changes of directions which alternate $d_1$ and horizontal directions can occur with probability $\left(p(1-p)\right)^k$. Moreover, there are $2^k$ equiprobable ways of combining all the possible choices of the horizontal directions of the sequence, since every horizontal direction must belong to the set $\{d_0,\, d_2\}$.\\
\noindent Similar arguments permit to show that \begin{equation}\label{longproofcalc2}\mathbb{P}\big(Y(t)=c\,T(t)\big\lvert D(0)=d_1,\; N(t)=2k+1\big)=\left(2p(1-p)\right)^k.\end{equation}
Formula (\ref{longproofcalc2}) holds because, in order to perform $2k+1$ changes of direction, the process must first perform $2k$ changes of direction with probability $\left(2p(1-p)\right)^k$ and then perform an additional change. Since the initial direction is $d_1$, the first $2k$ changes lead to the direction $d_1$ again. The event $\{Y(t)=c\,T(t)\}$ is then true if and only if the additional change of direction occurs towards a horizontal direction, which occurs with probability 1 because reflection is not admitted by hypothesis.\\
\noindent By now substituting formulas (\ref{longproofcalc1}) and (\ref{longproofcalc2}) into (\ref{longproofcalc}), we obtain
\begin{align}\mathbb{P}\big(Y(t)=c\,T(t)\big\lvert &D(0)=d_1\big)=\sum_{k=0}^{\infty}\mathbb{P}\big(Y(t)=c\,T(t),\; N(t)=k\big\lvert D(0)=d_1\big)\nonumber\\
=&e^{-\lambda t}\sum_{k=0}^{\infty}\frac{(\sqrt{2p(1-p)}\lambda t)^{2k}}{(2k)!}+\frac{e^{-\lambda t}}{\sqrt{2p(1-p)}}\sum_{k=0}^{\infty}\frac{(\sqrt{2p(1-p)}\lambda t)^{2k+1}}{(2k+1)!}\nonumber\\
=&e^{-\lambda t}\Bigg\{\cosh\left(\lambda t \sqrt{2p(1-p)}\right)+\frac{1}{\sqrt{2p(1-p)}}\,\sinh\left(\lambda t \sqrt{2p(1-p)}\right)\Bigg\}\nonumber\\
=&\frac{e^{-\lambda t}}{2}\Bigg\{\left(1+\frac{1}{\sqrt{2p(1-p)}}\right)e^{\lambda t\sqrt{2p(1-p)}}\nonumber\\
&\;\;\;\;\;\;\;+\left(1-\frac{1}{\sqrt{2p(1-p)}}\right)e^{-\lambda t\sqrt{2p(1-p)}}\Bigg\}.\nonumber\end{align}
Similar arguments permit to obtain the probability of $\big(T(t),Y(t)\big)$ lying on the oblique side of $R_{ct}$ conditional on the initial direction being horizontal. In particular, we have that
\begin{align}\mathbb{P}\big(Y(t)=c\,T(t)\big\lvert D(0)=d_0\big)=e^{-\lambda t}\Bigg\{&\frac{p}{2}\left(\frac{1}{2p(1-p)}+\frac{1}{\sqrt{2p(1-p)}}\right)e^{\lambda t\sqrt{2p(1-p)}}\nonumber\\
+&\frac{p}{2}\left(\frac{1}{2p(1-p)}-\frac{1}{\sqrt{2p(1-p)}}\right)e^{-\lambda t\sqrt{2p(1-p)}}+1-\frac{1}{2(1-p)}\Bigg\}\nonumber\end{align}
and
\begin{align}\mathbb{P}\big(Y(t)=c\,T(t)\big\lvert D(0)=d_2\big)=e^{-\lambda t}\Bigg\{&\frac{1-p}{2}\left(\frac{1}{2p(1-p)}+\frac{1}{\sqrt{2p(1-p)}}\right)e^{\lambda t\sqrt{2p(1-p)}}\nonumber\\
+&\frac{1-p}{2}\left(\frac{1}{2p(1-p)}-\frac{1}{\sqrt{2p(1-p)}}\right)e^{-\lambda t\sqrt{2p(1-p)}}+1-\frac{1}{2p}\Bigg\}.\nonumber\end{align}

The theorem is finally proved by observing that $$\mathbb{P}\big(Y(t)=c\,T(t)\big)=\frac{1}{4}\,\sum_{j=0}^2\mathbb{P}\big(Y(t)=c\,T(t)\big\lvert D(0)=d_j\big).$$
\end{proof}

\noindent We are now able to obtain the distribution (\ref{fbar}) in explicit form.
\begin{thm}\label{thm:fbar}If $p+q=1$, the probability density function (\ref{fbar}) is
\begin{align}\label{boundarydistrthm}\bar{f}(s,t)=\frac{\lambda}{2\sqrt{2}}\;&I_0\Big(2\lambda\sqrt{2p(1-p)}\,\sqrt{s(t-s)}\Big)\nonumber\\&+\frac{1}{4}\left(1+\frac{1}{2p(1-p)}\right)\frac{\partial}{\partial t}I_0\Big(2\lambda\sqrt{2p(1-p)}\,\sqrt{s(t-s)}\Big),\qquad\lvert s\in(0,t).\nonumber\end{align}\end{thm}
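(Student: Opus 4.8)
The plan is to mirror the scheme used for Theorems \ref{thm:f} and \ref{thm:diagdistr}, adapted to the factored operator in equation (\ref{fbarpdenoref}). First I would strip off the outer factor $\left(\frac{\partial}{\partial t}+\lambda\right)$ by setting $\widetilde{f}(s,t)=e^{\lambda t}\bar{f}(s,t)$: conjugation by $e^{\lambda t}$ turns $\frac{\partial}{\partial t}+\lambda$ into $\frac{\partial}{\partial t}$, and a short computation shows that the inner second-order factor of (\ref{fbarpdenoref}) becomes $\frac{\partial^2}{\partial t^2}+\frac{\partial^2}{\partial s\,\partial t}-2\lambda^2 p(1-p)$, so that $\widetilde{f}$ satisfies $\frac{\partial}{\partial t}\left(\frac{\partial^2}{\partial t^2}+\frac{\partial^2}{\partial s\,\partial t}-2\lambda^2 p(1-p)\right)\widetilde{f}=0$. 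As in the earlier proofs it is enough to exhibit a solution of the reduced equation $\left(\frac{\partial^2}{\partial t^2}+\frac{\partial^2}{\partial s\,\partial t}-2\lambda^2 p(1-p)\right)\widetilde{f}=0$ carrying the correct total mass, since any such solution also solves the third-order equation.

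For the reduced equation I would use the same substitution $z=\sqrt{s(t-s)}$ that was applied to $h(s,t)$: exactly as for equation (\ref{hcheckpde}) it collapses the equation to the modified Bessel equation $\widetilde{f}''(z)+\frac1z\widetilde{f}'(z)-8\lambda^2 p(1-p)\,\widetilde{f}(z)=0$, whose admissible solution --- after discarding the $K_0$ term, which would spoil integrability near $s=0$ and $s=t$ --- is $I_0\!\left(2\lambda\sqrt{2p(1-p)}\,z\right)$. Since (\ref{fbarpdenoref}) is autonomous in $t$, the time derivative of a solution is again a solution, so I would take the ansatz
\begin{equation*}\bar{f}(s,t)=e^{-\lambda t}\left[A\,I_0\!\left(2\lambda\sqrt{2p(1-p)}\,\sqrt{s(t-s)}\right)+B\,\frac{\partial}{\partial t}I_0\!\left(2\lambda\sqrt{2p(1-p)}\,\sqrt{s(t-s)}\right)\right]\end{equation*}
and reduce the problem to finding the two constants $A$ and $B$.

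These I would determine from the normalization $\int_0^t\bar{f}(s,t)\,ds=\mathbb{P}\big(Y(t)=c\,T(t)\big)-\mathbb{P}\big(T(t)=0\big)-\mathbb{P}\big(Y(t)=ct,\,T(t)=t\big)$, i.e.\ the total mass on the oblique side with the two atoms at its endpoints removed. The right-hand side is known: $\mathbb{P}\big(Y(t)=c\,T(t)\big)$ is given by Theorem \ref{thmobliqueprob}; the atom at $s=0$ equals $\tfrac12 e^{-\lambda t}$ by (\ref{verticalboundary}), since $Y(t)=0$ whenever $T(t)=0$; and the atom at $s=t$ equals $\tfrac14 e^{-\lambda t}$, because with $p+q=1$ the event $\{T(t)=t\}$ forces no change of direction, and then $Y(t)=ct$ only if the process started along $d_1$. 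The left-hand side is computed from (\ref{s(t-s)besselintegral}) and from its $t$-derivative, being careful with the Leibniz boundary term at $s=t$ produced by $I_0(0)=1$. Equating the coefficients of $e^{\lambda t\sqrt{2p(1-p)}}$, of $e^{-\lambda t\sqrt{2p(1-p)}}$ and of the constant term then yields $A$ and $B$; the identities $(2p-1)^2=1-4p(1-p)$ and $\tfrac{1}{p(1-p)}=\tfrac{2}{2p(1-p)}$ are what reconcile the a priori overdetermined linear system.

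The main difficulty I anticipate is twofold. First, one must justify searching for $\bar{f}$ among the solutions of the reduced second-order equation rather than of the full third-order equation (\ref{fbarpdenoref}); this is handled, as for Theorems \ref{thm:f} and \ref{thm:diagdistr}, by producing a candidate of the prescribed form with the right integrability and the right total mass and invoking uniqueness of the underlying probabilistic object. Second, the normalization step is delicate precisely because the singular part of the law of $\big(T(t),Y(t)\big)$ on the oblique side is carried by \emph{both} endpoints $s=0$ and $s=t$, so both atoms must be peeled off before comparison with Theorem \ref{thmobliqueprob}; identifying these atoms correctly and then checking consistency of the resulting over-determined system for $A$ and $B$ is where the care lies. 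A more computational alternative, avoiding Theorem \ref{thmobliqueprob}, would be to read off initial data for $\bar{f}$ and $\frac{\partial \bar{f}}{\partial t}$ at $t=0^{+}$ from the linear system (\ref{fbarsys}) and use those to fix $A$ and $B$.
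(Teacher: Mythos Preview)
Your approach is essentially the same as the paper's: reduce (\ref{fbarpdenoref}) to the second-order factor, apply the substitution $\widetilde{\bar f}=e^{\lambda t}\bar f$ and $z=\sqrt{s(t-s)}$ to obtain the modified Bessel equation, discard $K_0$, take the ansatz $e^{-\lambda t}\big(A\,I_0+B\,\partial_t I_0\big)$, and fix $A,B$ by matching the total mass on the oblique side against Theorem~\ref{thmobliqueprob} via (\ref{s(t-s)besselintegral}). Your explicit subtraction of the two endpoint atoms at $s=0$ and $s=t$ makes precise a step that the paper's proof only summarizes as ``consistent with formula (\ref{obliqueprob})''.
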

\begin{proof}In order to find a solution to equation (\ref{fbarpdenoref}), we solve the equation \begin{equation*}\Bigg(\frac{\partial^2}{\partial t^2}+\left(\frac{\partial}{\partial s}+2\lambda\right)\frac{\partial}{\partial t}+\lambda^2\left[1-2p(1-p)\right]+\lambda\frac{\partial}{\partial s}\Bigg)\bar{f}=0.\label{factorfbarpde}\end{equation*} By setting $$\widetilde{\bar{f}}(s,t)=e^{\lambda t}\bar{f}(s,t)$$ the equation is transformed into \begin{equation}\label{p+q=1reduced}\Bigg(\frac{\partial^2}{\partial t^2}+\frac{\partial^2}{\partial s\partial t}-2\lambda^2p(1-p)\Bigg)\widetilde{\bar{f}}=0.\end{equation} The change of variables $z=\sqrt{s(t-s)}$ transforms equation (\ref{p+q=1reduced}) into the Bessel equation \begin{equation*}\label{besselfbar}\frac{d^2\widetilde{\bar{f}}}{z^2}+\frac{1}{z}\frac{d\widetilde{\bar{f}}}{dz}-8\lambda^2p(1-p)\widetilde{\bar{f}}=0\end{equation*} which admits general solution in the form  \begin{equation*}\label{besselfbarsol}\widetilde{\bar{f}}(z)=A\;I_0\Big(2\lambda\sqrt{2p(1-p)}\,z\Big)+B\;K_0\Big(2\lambda\sqrt{2p(1-p)}\,z\Big).\end{equation*} By discarding the second kind modified Bessel function $K_0$ we write that \begin{equation}\label{fbarp+q=1general}\bar{f}(s,t)=e^{-\lambda t}\left[A\;I_0\Big(2\lambda\sqrt{2p(1-p)}\,\sqrt{s(t-s)}\Big)+B\;\frac{\partial}{\partial t}I_0\Big(2\lambda\sqrt{2p(1-p)}\,\sqrt{s(t-s)}\Big)\right].\end{equation} In view of the integral (\ref{s(t-s)besselintegral}), the coefficients $A$ and $B$ which make formula (\ref{fbarp+q=1general}) consistent with formula (\ref{obliqueprob}) are $$A=\frac{\lambda}{2\sqrt{2}},\qquad B=\frac{1}{4}\left(1+\frac{1}{2p(1-p)}\right).$$\end{proof}

\noindent We conclude our study of the distribution (\ref{fbar}) in the case $p+q=1$ by obtaining, in the following theorem, the characteristic function of $\big(T(t),Y(t)\big)$ on the oblique side of $R_{ct}$.

\begin{thm}\label{thmfbarchfp+q=1}If $p+q=1$, the characteristic function of $\big(T(t),Y(t)\big)$ on the oblique side of $R_{ct}$ reads
\begin{align}\label{obliquedistrchf}\mathbb{E}&\left[e^{i\alpha T(t)}\;\mathds{1}_{\{Y(t)=c\, T(t)\}}\right]\nonumber\\
&=\frac{1}{16}\left(\frac{1+2p(1-p)}{p(1-p)}+\frac{4p(1-p)(2\lambda+i\alpha)-i\alpha(1+2p(1-p))}{p(1-p)\sqrt{8\lambda^2p(1-p)-\alpha^2}}\right)e^{-\lambda t+\frac{t}{2}\left(i\alpha+\sqrt{8\lambda^2p(1-p)-\alpha^2}\right)}\nonumber\\
&\;\;+\frac{1}{16}\left(\frac{1+2p(1-p)}{p(1-p)}-\frac{4p(1-p)(2\lambda+i\alpha)-i\alpha(1+2p(1-p))}{p(1-p)\sqrt{8\lambda^2p(1-p)-\alpha^2}}\right)e^{-\lambda t+\frac{t}{2}\left(i\alpha-\sqrt{8\lambda^2p(1-p)-\alpha^2}\right)}\nonumber\\
&\;\;\;\;\;-\frac{(2p-1)^2}{8p(1-p)}\;e^{-\lambda t}.\end{align}
\end{thm}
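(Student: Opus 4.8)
The plan is to obtain the characteristic function
$\widehat{\bar f}(\alpha,t)=\mathbb E\big[e^{i\alpha T(t)}\,\mathds{1}_{\{Y(t)=c\,T(t)\}}\big]$
by reducing it to a scalar third‑order linear ODE in $t$, exactly as the earlier characteristic‑function results were obtained. Since $D(t)=d_3$ has probability zero on the event $\{Y(t)=c\,T(t)\}$, I would write $\widehat{\bar f}=\widehat{\bar f}_0+\widehat{\bar f}_1+\widehat{\bar f}_2$ with $\widehat{\bar f}_j(\alpha,t)=\mathbb E\big[e^{i\alpha T(t)}\,\mathds{1}_{\{Y(t)=c\,T(t),\,D(t)=d_j\}}\big]$. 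Taking the Fourier transform in $s$ of the system (\ref{fbarsys}) with $p+q=1$ (so $q=1-p$ and $1-p-q=0$) — the transport term $-\partial_s\bar f_1$ becoming $i\alpha\,\widehat{\bar f}_1$, with the endpoint contributions absorbed by the atoms of $T(t)$ at $0$ and $t$, or equivalently by a direct infinitesimal analysis of the motion — gives the constant‑coefficient system
\[
\frac{d}{dt}\begin{pmatrix}\widehat{\bar f}_0\\ \widehat{\bar f}_1\\ \widehat{\bar f}_2\end{pmatrix}=\mathrm M\begin{pmatrix}\widehat{\bar f}_0\\ \widehat{\bar f}_1\\ \widehat{\bar f}_2\end{pmatrix},\qquad \mathrm M=\begin{pmatrix}-\lambda & \lambda(1-p) & 0\\ \lambda p & i\alpha-\lambda & \lambda(1-p)\\ 0 & \lambda p & -\lambda\end{pmatrix},
\]
with $(\widehat{\bar f}_0,\widehat{\bar f}_1,\widehat{\bar f}_2)\big|_{t=0^+}=(\tfrac14,\tfrac14,\tfrac14)$, whence $\frac{d^{n}}{dt^{n}}(\widehat{\bar f}_0,\widehat{\bar f}_1,\widehat{\bar f}_2)^{\!\top}=\mathrm M^{\,n}(\widehat{\bar f}_0,\widehat{\bar f}_1,\widehat{\bar f}_2)^{\!\top}$ for all $n$.

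Summing the three components, $\widehat{\bar f}$ solves the scalar ODE whose characteristic polynomial is
\[
\det(\mathrm M-r\,\mathrm I)=-(r+\lambda)\Big[(r+\lambda)^2-i\alpha(r+\lambda)-2\lambda^2p(1-p)\Big],
\]
which is precisely the Fourier transform in $s$ of the factored equation (\ref{fbarpdenoref}). Its roots are $-\lambda$ and $-\lambda+\tfrac12\big(i\alpha\pm\sqrt{8\lambda^2p(1-p)-\alpha^2}\big)$, i.e.\ exactly the exponents appearing in (\ref{obliquedistrchf}), so
\[
\widehat{\bar f}(\alpha,t)=k_0(\alpha)\,e^{-\lambda t+\frac t2(i\alpha+\gamma)}+k_1(\alpha)\,e^{-\lambda t+\frac t2(i\alpha-\gamma)}+k_2(\alpha)\,e^{-\lambda t},\qquad\gamma:=\sqrt{8\lambda^2p(1-p)-\alpha^2}.
\]
The coefficients $k_0,k_1,k_2$ are then pinned down by the three initial data, which I would read off from $\frac{d^{n}}{dt^{n}}\widehat{\bar f}\big|_{0^+}=\tfrac14\,\mathbf 1^{\!\top}\mathrm M^{\,n}\mathbf 1$ with $\mathbf 1=(1,1,1)^{\!\top}$: namely $\widehat{\bar f}(\alpha,0^+)=\tfrac34$, $\frac{d}{dt}\widehat{\bar f}\big|_{0^+}=\tfrac14(i\alpha-\lambda)$, $\frac{d^2}{dt^2}\widehat{\bar f}\big|_{0^+}=\tfrac14\big(2\lambda^2p(1-p)-\alpha^2\big)$. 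Solving the resulting $3\times3$ Vandermonde system (nodes $-\lambda$ and $-\lambda+\tfrac12(i\alpha\pm\gamma)$) for $k_0,k_1,k_2$ and collecting terms, writing $\gamma=\sqrt{\beta^2-\alpha^2}$ with $\beta=2\lambda\sqrt{2p(1-p)}$, produces (\ref{obliquedistrchf}).

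I expect the only genuine labour to be this last step: extracting the closed forms of $k_0,k_1,k_2$ and simplifying them into the displayed shape, since the entries carry the radical $\sqrt{8\lambda^2p(1-p)-\alpha^2}$ and several cancellations have to be forced through. A small but real subtlety is that the correct initial value is $\widehat{\bar f}(\alpha,0^+)=\tfrac34$, not $1$, because the paths starting in direction $d_3$ leave the oblique side instantaneously; this is what accounts for the $e^{-\lambda t}$ term in (\ref{obliquedistrchf}). As consistency checks, setting $\alpha=0$ reduces the exponents to $-\lambda\pm\lambda\sqrt{2p(1-p)}$ and $-\lambda$ and gives $k_0(0)+k_1(0)+k_2(0)=\tfrac34$, so (\ref{obliquedistrchf}) collapses onto the probability $\mathbb P\big(Y(t)=c\,T(t)\big)$ of Theorem~\ref{thmobliqueprob}; one may also recover the formula by integrating the density of Theorem~\ref{thm:fbar} against $e^{i\alpha s}$ with the Bessel–Fourier integral and adding the endpoint masses $\mathbb P(T(t)=0)$ and $\mathbb P(T(t)=t,\,Y(t)=ct)$. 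The structural part — recognizing that the $s$‑transform of (\ref{fbarpdenoref}) factors and hands over the three exponents — is immediate, so no conceptual obstacle arises beyond the algebra.
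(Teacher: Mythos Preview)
Your proposal is correct and follows essentially the same route as the paper: the paper also passes from (\ref{fbarpdenoref}) to the third-order ODE (\ref{obliquechfode}) with the same three exponents $-\lambda$ and $-\lambda+\tfrac12(i\alpha\pm\gamma)$, and the same initial data $\widehat{\bar f}(\alpha,0)=\tfrac34$, $\widehat{\bar f}{}'(\alpha,0)=\tfrac14(i\alpha-\lambda)$, $\widehat{\bar f}{}''(\alpha,0)=\tfrac14(2\lambda^2p(1-p)-\alpha^2)$, then solves for $k_0,k_1,k_2$. The only difference is presentational: you make explicit, via the matrix $\mathrm M$ and $\tfrac14\,\mathbf 1^{\top}\mathrm M^{n}\mathbf 1$, how those initial values arise from the Fourier-transformed system (\ref{fbarsys}), whereas the paper simply states them.
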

\begin{proof}We set $$\widehat{\bar{f}}(s,t)=\mathbb{E}\left[e^{i\alpha T(t)}\;\mathds{1}_{\{Y(t)=c\, T(t)\}}\right].$$
By taking into account equation (\ref{fbarpdenoref}), the characteristic function $\widehat{\bar{f}}(s,t)$ can be obtained by finding the solution to the ordinary differential equation
\begin{equation}\label{obliquechfode}
\begin{dcases}
\frac{d^3\widehat{\bar{f}}}{dt^3}+\left[3\lambda-i\alpha\right]\frac{d^2\widehat{\bar{f}}}{dt^2}+\lambda\left[\lambda\left(3-2p(1-p)\right)-2i\alpha\right]\frac{d\widehat{\bar{f}}}{dt}+\lambda^2\left[\lambda\left(1-2p(1-p)\right)-i\alpha\right]\widehat{\bar{f}}=0\\
\widehat{\bar{f}}(\alpha,0)=\frac{3}{4}\\
\left.\frac{d}{d t}\widehat{\bar{f}}(\alpha,t)\right\lvert_{t=0}=\frac{i\alpha-\lambda}{4}\\
\left.\frac{d^2}{d t^2}\widehat{\bar{f}}(\alpha,t)\right\lvert_{t=0}=\frac{2\lambda^2p(1-p)-\alpha^2}{4}.
\end{dcases}\end{equation}
The general solution to equation (\ref{obliquechfode}) reads
$$\widehat{\bar{f}}(\alpha,t)=k_0(\alpha)\;e^{-\lambda t+\frac{t}{2}\left(i\alpha+\sqrt{8\lambda^2p(1-p)-\alpha^2}\right)}+k_1(\alpha)\;e^{-\lambda t+\frac{t}{2}\left(i\alpha-\sqrt{8\lambda^2p(1-p)-\alpha^2}\right)}+k_2(\alpha)\;e^{-\lambda t}.$$
\noindent and the coefficients $k_0,\;k_1$ and $k_2$ can be obtained by using the initial conditions.
\end{proof}

So far, we have studied the distribution of $\big(T(t),Y(t)\big)$ on the oblique side of the triangle $R_{ct}$ and we observed that, while finding the general distribution is a difficult problem, an explicit representation for the distribution can be obtained if reflection is not admitted. We now discuss another special case in which equation (\ref{fbarpde}) can be replaced by a simpler second-order equation, namely the case in which $p=q$. By defining \begin{equation*}\bar{f}_{02}(s,t)=\mathbb{P}\big(T(t)\in\mathop{ds},\,Y(t)=cs,\,D(t)\in\{d_0,\,d_2\}\big)/\mathop{ds},\qquad s\in(0,t)\end{equation*} it is clear that, under the assumption that $p=q$, the system of equations (\ref{fbarsys}) can be written in the form
\begin{equation}\label{fbarsysp=q}
\begin{dcases}
\frac{\partial \bar{f}_{02}}{\partial t}=-2\lambda p\bar{f}_{02}+2\lambda p\bar{f}_{1}\\
\frac{\partial \bar{f}_1}{\partial t}=-\frac{\partial \bar{f}_1}{\partial s}+\lambda p\bar{f}_{02}-\lambda \bar{f}_1
\end{dcases}
\end{equation} which implies that the probability density function (\ref{fbar}) satisfies the second-order partial differential equation
\begin{equation}\label{fbarpdep=q}\left(\frac{\partial^2}{\partial t^2}+\frac{\partial ^2}{\partial s\,\partial t}+\lambda(1+2p)\frac{\partial}{\partial t}+2\lambda p\frac{\partial}{\partial s}+2\lambda^2p(1-p)\right)\bar{f}=0.\end{equation}
In principle, we would start the analysis of the density function $\bar{f}$ by first studying the probability of the particle with position $\big(T(t),Y(t)\big)$ being on the oblique side of $R_{ct}$. Unfortunately, a combinatorial approach in the fashion of theorem \ref{thmobliqueprob} is not trivial if $p=q$ because reversion is admitted, except for the special case $p=q=\frac{1}{2}$. Therefore, for $p=q$, we start by studying the characteristic function in the following theorem.
\begin{thm}If $p=q$, the characteristic function of $\big(T(t),Y(t)\big)$ on the oblique side of $R_{ct}$ reads
\begin{align}\label{obliquedistrchfp=q}\mathbb{E}&\left[e^{i\alpha T(t)}\;\mathds{1}_{\{Y(t)=c\, T(t)\}}\right]\nonumber\\
=&\frac{3}{8}\left(1+\frac{\lambda-i\alpha+6\lambda p}{3\sqrt{\lambda^2(12p^2-4p+1)-\alpha^2-2i\alpha\lambda(1-2p)}}\right)\;e^{\frac{t}{2}\left(i\alpha-\lambda(1+2p)+\sqrt{\lambda^2(12p^2-4p+1)-\alpha^2-2i\alpha\lambda(1-2p)}\right)}\nonumber\\&\;\;+\frac{3}{8}\left(1-\frac{\lambda-i\alpha+6\lambda p}{3\sqrt{\lambda^2(12p^2-4p+1)-\alpha^2-2i\alpha\lambda(1-2p)}}\right)\;e^{\frac{t}{2}\left(i\alpha-\lambda(1+2p)-\sqrt{\lambda^2(12p^2-4p+1)-\alpha^2-2i\alpha\lambda(1-2p)}\right)}.\end{align}
\end{thm}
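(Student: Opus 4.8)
The plan is to follow the same route used for Theorem~\ref{thmfbarchfp+q=1}, with the simplification that the governing equation is now second order rather than third order. First I would set $\widehat{\bar f}(\alpha,t)=\mathbb{E}\big[e^{i\alpha T(t)}\,\mathds{1}_{\{Y(t)=c\,T(t)\}}\big]$, i.e.\ the Fourier transform of $\bar f(s,t)$ with respect to $s$, and apply this transform to the partial differential equation~(\ref{fbarpdep=q}); replacing $\partial/\partial s$ by $-i\alpha$ turns it into the constant-coefficient ordinary differential equation
\begin{equation*}\frac{d^{2}\widehat{\bar f}}{dt^{2}}+\big[\lambda(1+2p)-i\alpha\big]\frac{d\widehat{\bar f}}{dt}+2\lambda p\big[\lambda(1-p)-i\alpha\big]\widehat{\bar f}=0 .\end{equation*}
The associated algebraic equation $r^{2}+[\lambda(1+2p)-i\alpha]r+2\lambda p[\lambda(1-p)-i\alpha]=0$ has discriminant which, after a short computation, collapses to $\lambda^{2}(12p^{2}-4p+1)-\alpha^{2}-2i\alpha\lambda(1-2p)$; writing $\Delta(\alpha)$ for its square root, the two roots are $r_{\pm}=\tfrac12\big(i\alpha-\lambda(1+2p)\pm\Delta(\alpha)\big)$, so that $\widehat{\bar f}(\alpha,t)=k_{0}(\alpha)e^{r_{+}t}+k_{1}(\alpha)e^{r_{-}t}$, which already reproduces the exponentials in~(\ref{obliquedistrchfp=q}).

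Next I would pin down the two constants from the initial data. Introducing the Fourier transforms $\widehat{\bar f}_{02}$ and $\widehat{\bar f}_{1}$ of $\bar f_{02}$ and $\bar f_{1}$, the delta initial conditions $\bar f_{j}(s,0)=\tfrac14\delta(s)$ (whence $\bar f_{02}(s,0)=\tfrac12\delta(s)$ and $\bar f(s,0)=\tfrac34\delta(s)$) give $\widehat{\bar f}(\alpha,0)=\tfrac34$; differentiating the Fourier transform of the system~(\ref{fbarsysp=q}) and evaluating at $t=0$ gives $\tfrac{d}{dt}\widehat{\bar f}_{02}\big|_{0}=-\tfrac{\lambda p}{2}$ and $\tfrac{d}{dt}\widehat{\bar f}_{1}\big|_{0}=\tfrac{i\alpha}{4}+\tfrac{\lambda p}{2}-\tfrac{\lambda}{4}$, hence $\tfrac{d}{dt}\widehat{\bar f}\big|_{0}=\tfrac{i\alpha-\lambda}{4}$. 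Solving $k_{0}+k_{1}=\tfrac34$ together with $k_{0}r_{+}+k_{1}r_{-}=\tfrac{i\alpha-\lambda}{4}$, and eliminating $r_{\mp}$ using $r_{+}-r_{-}=\Delta(\alpha)$, produces $k_{0,1}(\alpha)=\tfrac38\big(1\pm\tfrac{\lambda-i\alpha+6\lambda p}{3\Delta(\alpha)}\big)$, which is the claimed expression.

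Everything here is routine, and the only places that demand care are the algebraic simplification of the discriminant and of the numerator $\lambda-i\alpha+6\lambda p$ that appears after elimination, together with the bookkeeping of the initial conditions through the auxiliary functions $\bar f_{02}$ and $\bar f_{1}$ (exactly as in the derivation of~(\ref{fbarsysp=q})). I would also record one structural remark: unlike Theorem~\ref{thmfbarchfp+q=1}, where~(\ref{fbarpde}) is genuinely of order three and factors through $\big(\tfrac{\partial}{\partial t}+\lambda\big)$ --- which is what forces the extra mode $e^{-\lambda t}$ in~(\ref{obliquedistrchf}) --- the equation~(\ref{fbarpdep=q}) is of order two, so only the two exponentials $e^{r_{\pm}t}$ survive and no $e^{-\lambda t}$ term is present. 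As a consistency check, setting $\alpha=0$ in the resulting formula reproduces the probability $\mathbb{P}\big(Y(t)=c\,T(t)\big)$ for $p=q$ stated in the introduction, and the specialization $p=\tfrac12$ matches the $p=\tfrac12$ case of Theorem~\ref{thmfbarchfp+q=1}.
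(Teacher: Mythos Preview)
Your proposal is correct and follows essentially the same route as the paper: Fourier-transforming equation~(\ref{fbarpdep=q}) to obtain the second-order ODE with initial data $\widehat{\bar f}(\alpha,0)=\tfrac34$ and $\tfrac{d}{dt}\widehat{\bar f}(\alpha,t)\big|_{t=0}=\tfrac{i\alpha-\lambda}{4}$, then solving for the two coefficients. You supply more detail than the paper in deriving the initial conditions from the system~(\ref{fbarsysp=q}) and add the structural remark and consistency checks, but the argument is the same.
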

\begin{proof}Similarly to theorem \ref{thmfbarchfp+q=1}, we set $$\widehat{\bar{f}}(s,t)=\mathbb{E}\left[e^{i\alpha T(t)}\;\mathds{1}_{\{Y(t)=c\, T(t)\}}\right].$$
Since $p=q$, equation (\ref{fbarpdep=q}) implies that the characteristic function $\widehat{\bar{f}}(s,t)$ is the solution to the ordinary differential equation
\begin{equation}\label{obliquechfodep=q}
\begin{dcases}
\frac{d^2\widehat{\bar{f}}}{dt^2}+\left[\lambda\left(1+2p\right)-i\alpha\right]\frac{d\widehat{\bar{f}}}{dt}+\left[2\lambda^2p(1-p)-2i\alpha\lambda p\right]\widehat{\bar{f}}=0\\
\widehat{\bar{f}}(\alpha,0)=\frac{3}{4}\\
\left.\frac{d}{d t}\widehat{\bar{f}}(\alpha,t)\right\lvert_{t=0}=\frac{i\alpha-\lambda}{4}.
\end{dcases}\end{equation}
The general solution to equation (\ref{obliquechfodep=q}) reads
\begin{align}\widehat{\bar{f}}(\alpha,t)=k_0(\alpha)\;&e^{\frac{t}{2}\left(i\alpha-\lambda(1+2p)+\sqrt{\lambda^2(12p^2-4p+1)-\alpha^2-2i\alpha\lambda(1-2p)}\right)}\nonumber\\&\qquad+ k_1(\alpha)\;e^{\frac{t}{2}\left(i\alpha-\lambda(1+2p)-\sqrt{\lambda^2(12p^2-4p+1)-\alpha^2-2i\alpha\lambda(1-2p)}\right)}.\end{align}
\noindent and the coefficients $k_0,\;k_1$ and $k_2$ are obtained by using the initial conditions.
\end{proof}
We are now able to calculate the probability of the process  $\big(T(t),Y(t)\big)$ lying on the oblique side of $R_{ct}$.
\begin{thm}If $p=q$, it holds that
\begin{align}\label{obliqueprobp=q}\mathbb{P}\big(Y(t)=c\,T(t)\big)=&\frac{3}{8}\left(1+\frac{(1+6p)}{3\sqrt{12p^2-4p+1}}\right)\;e^{\frac{\lambda t}{2}\left(-(1+2p)+\sqrt{12p^2-4p+1}\right)}\nonumber\\&\;\;+\frac{3}{8}\left(1-\frac{(1+6p)}{3\sqrt{12p^2-4p+1}}\right)\;e^{\frac{\lambda t}{2}\left(-(1+2p)-\sqrt{12p^2-4p+1}\right)}.\end{align}
\end{thm}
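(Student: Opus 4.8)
The plan is to deduce this probability directly from the characteristic function of the preceding theorem, without needing the (unavailable) explicit density. The starting point is the elementary identity
\[
\mathbb{P}\big(Y(t)=c\,T(t)\big)=\mathbb{E}\left[e^{i\alpha T(t)}\;\mathds{1}_{\{Y(t)=c\,T(t)\}}\right]\Big\rvert_{\alpha=0},
\]
valid for every $t>0$, so that formula (\ref{obliqueprobp=q}) should follow by setting $\alpha=0$ in (\ref{obliquedistrchfp=q}).

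Carrying out this substitution, the radicand $\lambda^2(12p^2-4p+1)-\alpha^2-2i\alpha\lambda(1-2p)$ collapses to $\lambda^2(12p^2-4p+1)$. Since the quadratic $12p^2-4p+1$ has negative discriminant ($16-48<0$), it is strictly positive for every $p\in[0,1]$, hence $\sqrt{\lambda^2(12p^2-4p+1)}=\lambda\sqrt{12p^2-4p+1}$ and the two exponential rates in (\ref{obliquedistrchfp=q}) remain real, equal to $\tfrac{\lambda}{2}\big(-(1+2p)\pm\sqrt{12p^2-4p+1}\big)$. Likewise the numerator $\lambda-i\alpha+6\lambda p$ becomes $\lambda(1+6p)$, and the common factor $\lambda$ cancels between numerator and radical, leaving $\tfrac{1+6p}{3\sqrt{12p^2-4p+1}}$. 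Collecting the two resulting terms reproduces exactly the right-hand side of (\ref{obliqueprobp=q}); a useful internal check is that at $p=\tfrac12$ this agrees with the value obtained from (\ref{obliqueprob}) in the no-reflection case, since $p=q=\tfrac12$ lies in both regimes.

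I do not expect a genuine obstacle here: the substantive work was already carried out in solving the ordinary differential equation (\ref{obliquechfodep=q}), and the only points deserving care are the choice of branch of the square root (pinned down by continuity with its real value at $\alpha=0$) and the strict positivity of $12p^2-4p+1$ that keeps the exponents real. Should one prefer a self-contained derivation that avoids the characteristic function, an alternative is to integrate the system (\ref{fbarsysp=q}) in $s$ over $(0,t)$: writing $P_{02}(t)=\mathbb{P}\big(Y(t)=c\,T(t),\,D(t)\in\{d_0,d_2\}\big)$ and $P_{1}(t)=\mathbb{P}\big(Y(t)=c\,T(t),\,D(t)=d_1\big)$, the transport term $-\partial_s\bar{f}_1$ contributes only a boundary flux at the vertex $s=t$, and one is left with a closed linear system of two ordinary differential equations for $(P_{02},P_{1})$ whose characteristic roots are again $\tfrac{\lambda}{2}\big(-(1+2p)\pm\sqrt{12p^2-4p+1}\big)$; solving it with the initial data inherited from $\bar{f}_j(s,0)=\tfrac14\delta(s)$ gives (\ref{obliqueprobp=q}). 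In this second route the only delicate point is the bookkeeping of the atoms sitting at the two vertices of the oblique side.
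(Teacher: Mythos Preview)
Your proposal is correct and follows exactly the paper's approach: the paper's proof consists of the single sentence that formula (\ref{obliqueprobp=q}) follows immediately from (\ref{obliquedistrchfp=q}) by setting $\alpha=0$. Your additional remarks---the positivity of $12p^2-4p+1$, the consistency check at $p=q=\tfrac12$, and the sketched alternative via integrating the system (\ref{fbarsysp=q})---go beyond what the paper provides but are sound and helpful elaborations rather than a different route.
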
\begin{proof}Formula (\ref{obliqueprobp=q}) follows immediately from formula (\ref{obliquedistrchfp=q}) by setting $\alpha=0$.\end{proof}
\noindent It is interesting to observe that formulas (\ref{obliqueprob}) and (\ref{obliqueprobp=q}) coincide if $p=q=\frac{1}{2}$.\\
Unfortunately, the inverse Fourier transform of the characteristic function (\ref{obliquedistrchfp=q}) is difficult to find. Therefore, in the case $p=q$ we are not able to obtain the exact distribution of the process on the oblique side of $R_{ct}$.\\

We conclude our work by studying the distribution of the process $\big(T(t),Y(t)\big)$ on the vertical side of $R_{ct}$. Of course, the particle with position $\big(T(t),Y(t)\big)$ lies on the vertical side if and only if $T(t)=t$, that is if the process $\big(X(t),Y(t)\big)$ has always moved vertically. Recalling formula (\ref{verticalboundary}), we have that \begin{equation}\label{verticalboundary2}\mathbb{P}\left(T(t)=t\right)=\frac{1}{2}\;e^{-\lambda t(p+q)}.\end{equation} We are now interested in studying the probability density function 
 \begin{equation}\bar{g}(y,t)\mathop{dy}=\mathbb{P}\Big(Y(t)\in\mathop{dy},\,T(t)=t\Big),\qquad |y|<ct.\label{gbar}\end{equation}

\noindent Moreover, for $j=1,3$, we define the functions \begin{equation*}\bar{g}_j(y,t)\mathop{dy}=\mathbb{P}\Big(Y(t)\in\mathop{dy},\,T(t)=t,\,D(t)=d_j\Big),\qquad |y|<ct.\end{equation*}
We have that
\begin{equation}\label{gbarsys}
\begin{dcases}
\frac{\partial \bar{g}_1}{\partial t}=-c\frac{\partial \bar{g}_1}{\partial y}+\lambda(1-p-q)\bar{g}_3-\lambda \bar{g}_1\\
\frac{\partial \bar{g}_3}{\partial t}=c\frac{\partial \bar{g}_3}{\partial y}+\lambda(1-p-q)\bar{g}_1-\lambda \bar{g}_3
\end{dcases}
\end{equation}
which implies that the partial differential equation \begin{equation}\label{gbarpde}\left(\frac{\partial^2 }{\partial t^2}+2\lambda\frac{\partial}{\partial t}-c^2\frac{\partial^2}{\partial y^2}+\lambda^2(p+q)(2-p-q)\right)\bar{g}=0.\end{equation}
is satisfied. We now observe that the system of equations (\ref{gbarsys}) and equation (\ref{gbarpde}) perfectly resemble equations (\ref{diagonaldiffeqs}) and (\ref{diagonalpde}), where the variable $x$  is replaced by $y$. Therefore, the motion on the diagonal of the square $S_{ct}$ is identical in distribution to the process moving on the vertical side of the triangle $R_{ct}$. The reason for which this relationship holds is clear. Both the motions take place at costant velocity $c$ on the segment $[-ct,\,ct]$ and the changes of direction occur at Poisson times with constant intensity $\lambda(1-p-q)$, that is when the process $\big(X(t),Y(t)\big)$ performs a reflection. Thus, by using the ideas of theorems \ref{thm:diagdistr} and \ref{thm:diagchf}, the exact distribution of $\big(T(t),Y(t)\big)$ on the vertical side of $R_{ct}$ can be obtained in explicit form. In particular, if $p+q<1$, we have that 
$$\bar{g}(y,t)=\frac{e^{-\lambda t}}{4c}\left[\lambda(1-p-q)\; I_0\left(\frac{\lambda}{c}\,(1-p-q)\,\sqrt{c^2t^2-y^2}\right)+\frac{\partial}{\partial t}I_0\left(\frac{\lambda}{c}\,(1-p-q)\,\sqrt{c^2t^2-y^2}\right)\right]$$
and 
\begin{align}\mathbb{E}\left[e^{i\beta Y(t)}\;\mathds{1}_{\{T(t)=t\}}\right]=\frac{e^{-\lambda t}}{4}&\left[\left(1+\frac{\lambda(1-p-q)}{\sqrt{\lambda^2(1-p-q)^2-\beta^2c^2}}\right)e^{t\sqrt{\lambda^2(1-p-q)^2-\beta^2c^2}}\right.\nonumber\\+&\left.\left(1-\frac{\lambda(1-p-q)}{\sqrt{\lambda^2(1-p-q)^2-\beta^2c^2}}\right)e^{-t\sqrt{\lambda^2(1-p-q)^2-\beta^2c^2}}\right].\nonumber\end{align}

\bibliographystyle{plain}
\nocite{*}
\bibliography{bibliography}
\end{document}